\tikzset{->-/.style={decoration={markings,mark=at position #1 with {\arrow{>}}},postaction={decorate}}}
\newcommand\ndot{{\mkern 1mu\cdot\mkern 1mu}}
\newcommand{\Z}{{\mathbb{Z}}}
\newcommand{\R}{{\mathbb{R}}}
\newcommand{\RP}{{\mathbb{P}}} 
\newcommand{\vx}{{\bf x}} 
\newcommand{\J}{P} 
\newcommand{\Prod}{\prod\RP^A_{\geq 0}}
\newcommand{\cA}{{\mathcal{A}}}
\newcommand{\cB}{{\mathcal{B}}}
\newcommand{\cI}{{\mathcal{I}}}
\newcommand{\cT}{{\mathcal{F}}}
\newcommand{\cW}{{\mathcal{W}}}
\newcommand{\cZ}{{\mathcal{Z}}}
\newcommand{\bZ}{{\pmb Z}}
\newcommand{\bI}{{\pmb I}}
\newcommand{\bR}{{\pmb{R}}}
\newcommand{\bS}{{\pmb{S}}}
\newcommand{\bT}{{\pmb{F}}}
\newcommand{\im}{\operatorname{im}}
\newcommand{\lk}{\operatorname{lk}}
\newcommand{\rank}{\operatorname{rank}}
\newcommand{\X}{\overline X}
\newcommand{\einv}{\overline e}
\newcommand{\finv}{\overline f}
\newcommand{\bdry}{\partial}
\newcommand{\Out}{{\mathrm{Out}}}  
\newcommand{\Outn}{\Out(F_n)}
\newcommand{\incl}{\hookrightarrow}
\newcommand{\iso}{\cong}
\newcommand{\On} {\mathcal{O}_n}  
\newcommand{\BFn}{b\On}    
\newcommand{\BVn}{\mathcal J_n}   
\newcommand{\core}{\operatorname{core}} 
\newcommand{\csigma}{\sigma} 
\newcommand{\cOsigma}{\bar\sigma} 
\newcommand{\osigma}{\mathring\sigma} 
\newcommand{\scrC}{\mathscr C} 
\newcommand{\sslash}{/\mkern-6mu/}
\definecolor{red}{rgb}{1,0,0} 
\definecolor{darkgreen}{rgb}{0, .5, 0.2}
\definecolor{purple}{rgb}{.7, 0, 1}
\newcommand{\hgline}[2]{.  
\pgfmathsetmacro{\thetaone}{#1}
\pgfmathsetmacro{\thetatwo}{#2}
\pgfmathsetmacro{\theta}{(\thetaone+\thetatwo)/2}
\pgfmathsetmacro{\phi}{abs(\thetaone-\thetatwo)/2}
\pgfmathsetmacro{\close}{less(abs(\phi-90),0.0001)}
\ifdim \close pt = 1pt
    \draw[blue] (\theta+180:1) -- (\theta:1);
\else
    \pgfmathsetmacro{\R}{tan(\phi)}
    \pgfmathsetmacro{\distance}{sqrt(1+\R^2)}
    \draw[blue] (\theta:\distance) circle (\R);
\fi
}
\newtheorem{proposition}{Proposition}[section]
\newtheorem{definition}[proposition]{Definition}
\newtheorem{theorem}[proposition]{Theorem}
\newtheorem{lemma}[proposition]{Lemma}
\newtheorem{corollary}[proposition]{Corollary}
\author{Kai-Uwe Bux, Peter Smillie and Karen Vogtmann}
\title[On the bordification of Outer space] {On the bordification of Outer space}
\begin{document}

\maketitle

\begin{abstract}
We give a simple construction of an equivariant deformation retract of Outer space which is homeomorphic to the Bestvina-Feighn bordification.  This results in a much easier proof that the bordification is (2n-5)-connected at infinity, and hence that $Out(F_n)$ is a virtual duality group.
 \end{abstract}

\section*{Introduction}
The action of $SL(n,\Z)$ on the symmetric space $X_n=SL(n,\R)/SO(n)$ is not cocompact, but Siegel \cite{S} showed how to glue affine spaces to $X_n$ to obtain a contractible manifold with corners in such a way that the action extends to a proper cocompact action.  In a landmark paper  \cite{BS}, Borel and Serre generalized this to all arithmetic groups $\Gamma$ in reductive algebraic groups $G$, and it is now commonly referred to as the {\em Borel-Serre bordification} of the symmetric space $G/K$. In the case $G=SL_n$, Grayson  \cite{G} later showed  how to construct an equivariant deformation retract of $X_n$ with the same properties as the bordification.  Leuzinger  \cite{Leu, Leu2} then defined a similar retract for more general groups, and said it is ``likely that this retract is isomorphic  to the Borel-Serre bordification."   These retracts
avoid many of the technical problems associated with extending the space and the action, and are generally much easier to understand.    

The group $\Outn$ shares a large number of properties with arithmetic groups, many of which are proved by considering its action on {\em Outer space} $\On$, which serves as a substitute for the homogeneous space $G/K$.  Motivated by the work of Borel and Serre, Bestvina and Feighn  \cite{BF} constructed a contractible bordification $\BFn$ such that  the action of $Out(F_n)$ on $\On$ extends to a proper cocompact action on $\BFn$.  They used their bordification  to prove that $Out(F_n)$ is a virtual duality group;  the key further ingredient needed for this is to prove that $\BFn$ is $(2n-5)$-connected at infinity.  

In this paper we follow the lead of Grayson and Leuzinger by showing that there is an equivariant deformation retract of $\On$ which is cocompact and $(2n-5)$-connected at infinity.  We show that this retract is equivariantly homeomorphic to the  Bestvina-Feighn bordification and in the process answer a question in their paper about the topology of the pieces $\Sigma(G,g)$ from which their bordification is constructed.    The description of the retract is simpler than that of the bordification. We note that our retract is equivalent to a retract sketched briefly without proof, discussed mainly for $n=3$, in \cite{Ji}.  We use our description to give a different, considerably simpler  proof of the connectivity result in \cite{BF}. In a sequel we will also use it to study the boundary.

{\bf Acknowledgements:}
Karen Vogtmann was partially supported by  the Humboldt Foundation and a Royal Society Wolfson Award.  Kai-Uwe~Bux was supported by the German Science Foundation via the {\small CRC}\,701.

\section{Background: Outer space}\label{sec:background}

In this section we briefly describe Outer space and its decomposition into open simplices $\osigma(G,g),$ in order to introduce the notation needed for this paper.  For a somewhat more detailed quick introduction to these ideas, see \cite{V}, and for detailed proofs see the original paper \cite{CV}.   

Outer space $\On$ is a contractible, $(3n-4)$-dimensional space with a proper action of the group $Out(F_n)$ of outer automorphisms of the free group $F_n$.  A point of $\On$ is determined by a metric graph $G$ together with a homotopy equivalence $g,$ called a {\em marking}, from a fixed $n$-petaled rose $R_n$ to $G$. The graphs $G$ must be connected with no univalent or bivalent vertices, and we will also assume they have no separating edges; this is sometimes called {\em reduced} Outer space.  The metric on $G$ must have {\em volume 1}, i.e. the sum of the edge lengths is equal to 1. The pair $(G,g)$ is called a {\em marked graph}.  Different marked graphs determine the same point of $\On$ if they are isometric by  an isometry  which commutes with the marking up to homotopy.

There is a natural decomposition of $\On$ as a disjoint union of open simplices.  The simplex  containing the point $(G,g)$ is obtained by simply varying  the (positive) edge lengths of $G$ while keeping the  volume equal to 1; it is denoted $\osigma(G,g)$ and its closure in $\On$ is denoted $\cOsigma(G,g)$. If $G$ has $k$ edges, then $\osigma(G,g)$ is an open $(k-1)$-simplex.  A simplex $\osigma(G',g')$ is a face of $\cOsigma(G,g)$ if $G'$ can be obtained from $G$ by shrinking some edges to points, and $g'$ is homotopic to $g$ composed with the collapse.   Note $\cOsigma(G,g)$  is not a closed simplex since some of its faces  are missing, namely those approached by shrinking all edges of a subgraph that contains loops.

The open simplices $\osigma(G,g)$ form a partially ordered set  ({\em poset\,}) where the partial order is the face relation, i.e. $\osigma(G',g')\leq \osigma(G,g)$ if $\osigma(G',g')\subseteq \cOsigma(G,g).$ The geometric realization of this poset is called the {\em spine} $K_n$ of $\On$.  The spine   has a natural embedding into $\On$ as an equivariant deformation retract (the case $n=2$ is illustrated in  Figure~\ref{OS2}).

A key notion in the paper \cite{BF} is that of a {\em core subgraph} of $G$. By a {\em subgraph} we mean the closure in $G$ of a set of edges.  An (open) edge $e$ of a subgraph $H$ {\em separates} $H$ if $H-e$ has an additional component, and a subgraph $H$ is {\em core} if none of its edges separates $H$.
 In particular, $G$ is a core subgraph of itself. In general core subgraphs need not be connected, and they may contain bivalent vertices (but not univalent vertices, because removing the unique edge to a univalent vertex would separate the subgraph).   Note that every subgraph of $G$ contains a unique maximal core subgraph. If this maximal core  is empty the subgraph is a union of trees, i.e. a {\em forest} in $G$.

\section{Jewels and the retract}\label{jewels}\label{sec:jewels}

In this section we find a compact cell $J(G,g)$ (a {\em jewel\,}) inside the closure $\cOsigma(G,g)$ of $\osigma(G,g)$ in $\On$, then glue these cells together to form an equivariant deformation retract of $\On$.  The construction of $J(G,g)$ will be independent of the marking $g$, so we temporarily eliminate the marking from our notation.

We consider $\osigma(G)$ to be the interior of  a closed simplex $\csigma(G)$.  The faces of $\sigma(G)$ which are not  in $\cOsigma(G)$  are said to be {\em at infinity}.
 We view $\csigma(G)$  as a regular Euclidean simplex; the  lengths of edges in $G$ give barycentric coordinates on $\csigma(G)$. A face at infinity is obtained by setting the edge-lengths of   $H$ equal to zero for some subgraph $H$ that contains a loop.  In particular all vertices of $\csigma(G)$ are at infinity, assuming $n>1$.  An equivalent way of saying this is that  $\rank(H_1(G\sslash H^c))<\rank(H_1(G))$,  where  $G\sslash H^c$ denotes the graph obtained by collapsing all edges  of $H^c$ to points.

The jewel $J(G)$ is a convex polytope, obtained by shaving off some  faces of $\csigma(G)$ that are  at infinity.   Specifically, label each vertex of $\csigma(G)$  by the corresponding edge of $G$.  If  a set of edges  forms a core subgraph of $G$, we shave off the opposite face, i.e. the face spanned by the remaining edges. We shave deeper for larger core graphs.

More precisely: if $G$ has  edges $e_0,\ldots,e_m$, realize $\csigma(G)$ as the set of points in the positive orthant of $\R^{m+1}$ whose coordinate sum is equal to $1$:
$$\csigma(G)=\{(x_0,\ldots,x_{m})\in \R^{m+1} \mid   x_i\geq 0, \, x_0+\ldots+x_m=1\}.$$  
If $A$ is a subgraph of $G$, then there is a natural inclusion $\sigma(A) \subset \sigma(G)$.   For each core subgraph $A$  we shave  the opposite  face $\csigma(A^c)$ by a factor  $c_{A};$ this is accomplished by taking the intersection of   $\csigma(G)$ with the half-space $\sum_{e_i\in A} x_i\geq c_{A}$.
Here the constants $c_A$ should be chosen to be small positive numbers which increase quickly with the size of $A$.  Specifically, we require all $c_{A}\ll 1$ and  $c_{A}>2c_{B}$ if $A$ properly contains $B$. The cell $J(G)$ is the result of shaving all faces opposite core faces of $\csigma(G)$.

 \begin{remark} The jewel $J(G)$ has also found applications in the context of Feynman integrals, where core subgraphs are called {\em 1-particle irreducible subgraphs} and $J(G)$ is called the {\em graph polytope} (see, e.g., \cite{Bro}). In this context the constants $c_A$ depend only on the number of edges in $A$.  In our context it will be more convenient to use constants $c_A$ that depend on the rank of $H_1(A)$.
\end{remark}  

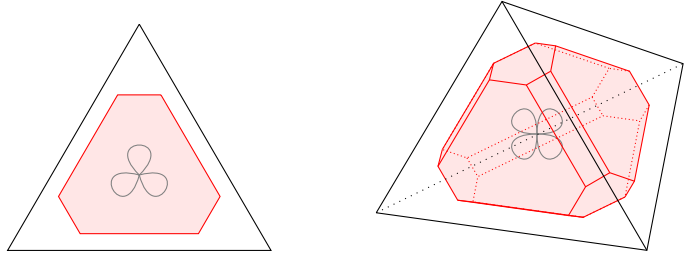
\begin{figure}
\begin{center}
\begin{tikzpicture}[scale=1] 
\draw (90:2) to (210:2) to (330:2) to (90:2);
\fill [red!10] (75:1.1) to (105:1.1) to (195:1.1) to (225:1.1) to (315:1.1) to (345:1.1) to (75:1.1);
\draw [red] (75:1.1) to (105:1.1) to (195:1.1) to (225:1.1) to (315:1.1) to (345:1.1) to (75:1.1);
\draw  [black!50](0,0) .. controls (45:.75) and (135:.75)   .. (0,0);
\draw  [black!50](0,0) .. controls (15:.75) and (285:.75)   .. (0,0);
\draw  [black!50](0,0) .. controls (255:.75) and (165:.75)   .. (0,0);
 \end{tikzpicture}
 \hskip .5in
  \begin{tikzpicture}[scale=1] 
\coordinate (a) at (90:2); 
\coordinate (b) at (330:2); 
\coordinate (c) at (200:1.6); 
\coordinate (d) at ($1.25*(a) + 1.25*(b)$); 
\coordinate (temp) at ($(a)+(b)+(c)+(d)$);
\coordinate (center) at ($.25*(temp)$);
\coordinate (aplus) at ($1.1*(a) - .1*(center)$);
\coordinate (bplus) at ($1.1*(b) - .1*(center)$);
\coordinate (cplus) at ($1.1*(c) - .1*(center)$);
\coordinate (dplus) at ($1.1*(d) - .1*(center)$);
\coordinate (x) at  ($.6*(a) +.15*(b)$);
\coordinate (y) at  ($.6*(a) +.15*(c)$);
\coordinate (z) at  ($.6*(c) +.15*(a)$);
\coordinate (w) at  ($.6*(c) +.15*(b)$);
\coordinate (u) at  ($.6*(b) +.15*(c)$);
\coordinate (v) at  ($.6*(b) +.15*(a)$);
\coordinate (rc) at($.33*(a)+.33*(b)+.33*(d)$);
\coordinate (ar) at ($.667*(a)-.33*(b)-.33*(d)$); 
\coordinate (br) at ($.667*(b)-.33*(a)-.33*(d)$); 
\coordinate (cr) at ($.667*(d)-.33*(b)-.33*(a)$); 
\coordinate (xr) at  ($.6*(ar) +.15*(br)+(rc)$);
\coordinate (yr) at  ($.6*(ar) +.15*(cr)+(rc)$);
\coordinate (zr) at  ($.6*(cr) +.15*(ar)+(rc)$);
\coordinate (wr) at  ($.6*(cr) +.15*(br)+(rc)$);
\coordinate (ur) at  ($.6*(br) +.15*(cr)+(rc)$);
\coordinate (vr) at  ($.6*(br) +.15*(ar)+(rc)$);
\coordinate (bkc) at  ($.33*(a)+.33*(c)+.33*(d)$);
\coordinate (abk) at ($.667*(a)-.33*(c)-.33*(d)$); 
\coordinate (bbk) at ($.667*(c)-.33*(a)-.33*(d)$); 
\coordinate (cbk) at ($.667*(d)-.33*(c)-.33*(a)$); 
\coordinate (xbk) at  ($.6*(abk) +.15*(bbk)+(bkc)$);
\coordinate (ybk) at  ($.6*(abk) +.15*(cbk)+(bkc)$);
\coordinate (zbk) at  ($.6*(cbk) +.15*(abk)+(bkc)$);
\coordinate (wbk) at  ($.6*(cbk) +.15*(bbk)+(bkc)$);
\coordinate (ubk) at  ($.6*(bbk) +.15*(cbk)+(bkc)$);
\coordinate (vbk) at  ($.6*(bbk) +.15*(abk)+(bkc)$);
\coordinate (flc) at  ($.33*(b)+.33*(c)+.33*(d)$);
\coordinate (afl) at ($.667*(b)-.33*(c)-.33*(d)$); 
\coordinate (bfl) at ($.667*(c)-.33*(b)-.33*(d)$); 
\coordinate (cfl) at ($.667*(d)-.33*(c)-.33*(b)$); 
\coordinate (xfl) at  ($.6*(afl) +.15*(bfl)+(flc)$);
\coordinate (yfl) at  ($.6*(afl) +.15*(cfl)+(flc)$);
\coordinate (zfl) at  ($.6*(cfl) +.15*(afl)+(flc)$);
\coordinate (wfl) at  ($.6*(cfl) +.15*(bfl)+(flc)$);
\coordinate (ufl) at  ($.6*(bfl) +.15*(cfl)+(flc)$);
\coordinate (vfl) at  ($.6*(bfl) +.15*(afl)+(flc)$);
\fill[red!10]  (vfl) to (xfl) to (yfl) to (ur) to (wr) to (zr) to (yr) to (ybk) to (xbk) to (vbk) to  (z) to (w) to (vfl);
 \begin{scope}[xshift=.4cm, yshift=.4cm]
 \draw [black!50] (0,0) .. controls (0:.75) and (90:.75)   .. (0,0);
\draw  [black!50](0,0) .. controls (90:.75) and (180:.75)   .. (0,0);
\draw  [black!50](0,0) .. controls (180:.75) and (270:.75)   .. (0,0);
\draw  [black!50](0,0) .. controls (270:.75) and (360:.75)   .. (0,0);
\end{scope}
\draw [red, densely dotted]  (xfl) to (yfl) to (zfl) to (wfl) to (ufl) to (vfl) to (xfl); 
\draw [red, densely dotted]  (xbk) to (ybk) to (zbk) to (wbk) to (ubk) to (vbk) to (xbk); 
\draw [red]  (xr) to (yr) to (zr) to (wr) to (ur) to (vr) to (xr); 
\draw [red]  (x) to (y) to (z) to (w) to (u) to (v) to (x); 
\draw  [red] (u) to (xfl);
\draw [red] (v) to (vr);
\draw [red] (w) to (vfl);
\draw [red] (x) to (xr);
\draw [red] (y) to (xbk);
\draw [red] (z) to (vbk);
\draw [red] (ur) to (yfl);
\draw [red, densely dotted] (wr) to (zfl);
\draw [red] (yr) to (ybk);
\draw [red, densely dotted] (zr) to (zbk);
\draw [red, densely dotted] (ubk) to (ufl);
\draw [red, densely dotted] (wbk) to (wfl);
\draw [red] (vbk) to (xbk) to (ybk);
\draw [red] (vfl) to (xfl) to (yfl);
\draw (cplus) to (aplus) to (dplus) to (bplus) to  (cplus);
\draw[dotted] (cplus) to (dplus);
\draw (aplus) to (bplus);
  \end{tikzpicture}
\end{center}
\caption{Permutohedra in rose faces for $n=3$ and $n=4$}
\label{fig:permutohedra}
\end{figure}

If $G$ is a rose, then every proper subset of edges is a core subgraph, so every proper face of $\csigma(G)$ is shaved.  The resulting convex polytope $J(G)$ is called 
a {\em permutohedron} of rank $n$.  The cases $n=3$ and $n=4$ are illustrated in Figure~\ref{fig:permutohedra}.

An example of $J(G)$ when $G$ is not a rose is shown in Figure~\ref{fig:jewel}. Here a face $\sigma(H)$ of $\sigma(G)$ is identified with the subset of $\{0,1,2,3\}$ indexing the edges of $H$. The core subgraphs properly contained in $G$  are spanned by the sets $\{e_0\}$, $ \{e_1\}$, $\{e_2,e_3\}$, $\{e_0,e_1\}$, $\{e_1,e_2,e_3\}$ and $\{e_0,e_2,e_3\}$, so the faces that are shaved are  $\{1,2,3\}$, $ \{0,2,3\}$, $\{0,1\}$, $\{2,3\}$, $ \{0\}$ and $\{1\}$ respectively.   Faces of $\csigma(G)$  obtained by collapsing a maximal tree are called {\em rose faces}; note that these are not shaved.  In our example $\csigma(G)$  has two rose faces, $e_2=0$ and $e_3=0$.  Since the rank of $G$ is $3$, each rose face contains a permutohedron with six vertices.  These are the only vertices of $J(G),$ i.e. $J(G)$ is the convex hull of the permutohedra contained in the rose faces of $\csigma(G)$.   This description of $J(G)$ is general:

\begin{figure}
\begin{center}
\begin{tikzpicture}[scale=1] 
\fill [red!10] (105:1.1) to (60:1.9) to (35:2.5) to (25:2.5) to (-4:1.85) to (315:1.1) to (225:1.1) to (195:1.1) to (105:1.1);
\draw (90:2) to (210:2) to (330:2) to (90:2);
\draw (90:2) to (30:3) to (330:2);
\draw [dotted] (210:2) to (30:3);
\draw [red] (75:1.1) to (105:1.1) to (195:1.1) to (225:1.1) to (315:1.1) to (345:1.1) to (75:1.1);
\draw [red] (60:1.5) to (60:1.9) to (35:2.5) to (25:2.5) to (-4:1.85) to (-4:1.5) to (60:1.5);
\draw [red] (75:1.1) to (60:1.5);
\draw [red] (105:1.1) to (60:1.9);
\draw [red] (-4:1.5) to (345:1.1);
\draw [red] (-4:1.85) to (315:1.1);
\draw [red,densely dotted] (225:1.1) to (25:2.5);
\draw [red,densely dotted] (195:1.1) to (35:2.5);
\draw (0,0) .. controls (45:.75) and (135:.75)   .. (0,0);
\draw (0,0) .. controls (15:.75) and (285:.75)   .. (0,0);
\draw (0,0) .. controls (255:.75) and (165:.75)   .. (0,0);
\begin{scope}[xshift=1.6cm, yshift=.85cm, rotate=45]
\draw (0,0) .. controls (45:.75) and (135:.75)   .. (0,0);
\draw (0,0) .. controls (15:.75) and (285:.75)   .. (0,0);
\draw (0,0) .. controls (255:.75) and (165:.75)   .. (0,0);
\end{scope}
\node (a0) at (0,2.3) {$1$};
\node (b0) at (-2,-1.3) {$2$};
\node (c0) at (2,-1.3) {$0$};
\node (d0) at (2.9,1.7) {$3$};
\begin{scope}[xshift=-6cm]
\draw (.75,0) .. controls (2,1.25) and (2,-1.25)   .. (.75,0);
\draw (-.75,0) .. controls (-2,1.25) and (-2,-1.25)   .. (-.75,0);
\draw (-.75,0) .. controls (-.5,.5) and (.5,.5)   .. (.75,0);
\draw (.75,0) .. controls (.5,-.5) and (-.5,-.5)   .. (-.75,0);
\node (a) at (0,.6) {$e_2$};
\node (b) at (0,-.6) {$e_3$};
\node (c) at (2,0) {$e_1$};
\node (d) at (-2,0) {$e_0$};
\end{scope}
 \end{tikzpicture}
\end{center}
\caption{Example of a graph which is not a rose and its associated jewel}
\label{fig:jewel}
\end{figure}
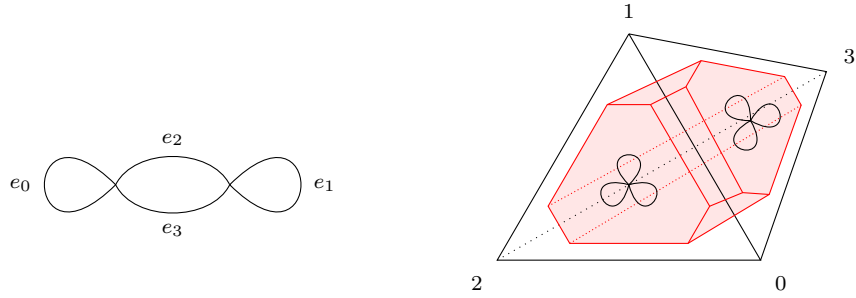

\begin{proposition}\label{prop:rosefaces} For any $(G,g)$ in $\On,$ the jewel $J(G)$ is the convex hull of the permutohedra contained in the rose faces of $\csigma(G).$
\end{proposition}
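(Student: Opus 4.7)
My plan is to prove the proposition by matching the extreme points of $J(G)$ with the vertices of the permutohedra in the rose faces. The first step is to establish, for each spanning tree $T$ of $G$, the identity $J(G)\cap \csigma(G\sslash T)=P_T$, where $P_T$ is the permutohedron in the rose face $\csigma(G\sslash T)$. On this face a defining inequality $\sum_{e_i\in A} x_i\geq c_A$ of $J(G)$ (indexed by a core subgraph $A$) restricts to $\sum_{e_i\in A\setminus T} x_i\geq c_A$, with $A\setminus T$ nonempty because $A$ contains a loop while $T$ is a forest. Two combinatorial observations then pin this down. First, for any nonempty $S\subseteq G\setminus T$, the maximal core subgraph of $S\cup T$ is a core subgraph $A\subseteq G$ with $A\setminus T=S$ and $\rank(A)=|S|$: each edge of $S$ closes a cycle via a tree path in $T$, and collapsing $T$ produces a rose of rank $|S|$. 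Second, for any core $A$, the subforest $A\cap T$ of $A$ extends to a spanning forest of $A$, which forces $\rank(A)\leq|A\setminus T|$. Since $c_A$ depends only on $\rank(A)$, these two facts identify the restricted inequalities with the defining inequalities of $P_T$; in particular $P_T\subseteq J(G)$ as a face, hence $\mathrm{conv}(\bigcup_T P_T)\subseteq J(G)$.

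For the reverse inclusion it suffices to prove that every vertex $v$ of $J(G)$ lies in some $\csigma(G\sslash T)$, for then the first step places $v$ in $P_T$, and because $P_T$ is a face of $J(G)$ cut out by a supporting hyperplane, $v$ is a vertex of $P_T$. Let $F=\{e_i:x_i(v)=0\}$. Since $c_A>0$, the set $F$ cannot contain any core subgraph $A$, so $F$ spans a subforest of $G$. I argue by induction on $|E(G)|$ that $F$ must be a spanning tree. The base case is $G$ a rose, in which case $\csigma(G)$ is its own rose face. In the inductive step, if $F$ is a proper nonempty subforest, then $v$ lies in the relative interior of $\csigma(G\sslash F)$, and a parallel argument to the first step identifies $J(G)\cap\csigma(G\sslash F)$ with the jewel $J(G\sslash F)$ of the quotient graph $G\sslash F$, which has the same rank $n$ but strictly fewer edges. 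Then $v$ is a vertex of $J(G\sslash F)$, so by induction it lies in a rose face of $\csigma(G\sslash F)$; this forces $x_j(v)=0$ for some $e_j\notin F$, contradicting the definition of $F$. The remaining subcase is $F=\emptyset$ with $G$ not a rose, where a short separate argument using the smallness conditions $c_A\ll 1$ and $c_A>2c_B$ for $A\supsetneq B$ rules out a vertex in the interior of $\csigma(G)$.

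The main technical obstacle is the identification $J(G)\cap\csigma(G\sslash F)=J(G\sslash F)$ needed in the inductive step. It requires setting up a bijection between the relevant core subgraphs of $G$ and of $G\sslash F$ via $A\mapsto A\sslash(A\cap F)$ and the maximal-core lift of a core subgraph of $G\sslash F$, and verifying that this correspondence preserves the rank of $H_1$ and hence the shaving constants. With this in place, $J(G)$ and $\mathrm{conv}(\bigcup_T P_T)$ have the same vertices, and the proposition follows.
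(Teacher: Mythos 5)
Your overall strategy is essentially the paper's: restrict the shaving inequalities to faces of $\csigma(G)$, identify $J(G)\cap\csigma(G\sslash T)$ with the permutohedron $P_T$ (your two observations, that $\core(S\cup T)$ has $S$ as its non-tree part with $\rank=|S|$, and that $\rank(A)\le |A\setminus T|$ for any core $A$, are a correct way to verify the compatibility of the constants that the paper only asserts), then reduce the reverse inclusion to showing that a vertex of $J(G)$ cannot have all coordinates positive unless $G$ is a rose. The genuine gap is that this last claim, which you defer as ``a short separate argument using the smallness conditions,'' is the mathematical heart of the proposition, and you give no argument for it. It does not follow from $c_A\ll 1$ and $c_A>2c_B$ by any routine count: a non-rose graph can have many core subgraphs (the four-edge graph of Figure~2 has six proper core subgraphs while a vertex needs only three active facet hyperplanes), so one cannot exclude $m$ shaving hyperplanes meeting at a point of the open simplex by counting alone. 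Moreover, in your induction the case $F=\emptyset$ must be handled for \emph{every} non-rose $G$, not just for small graphs, so without this step the induction has nothing to stand on.

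What is needed is the following chain of ideas, which is exactly what the paper supplies: (i) the union of two core subgraphs is again core; (ii) hence if two shaving constraints $x_A=c_A$ and $x_B=c_B$ are active at $y\in J(G)$ with neither of $A,B$ containing the other, then $x_{A\cup B}(y)\le c_A+c_B<c_{A\cup B}$ (using $c_{A\cup B}>2c_A$ and $c_{A\cup B}>2c_B$), contradicting $y\in J(G)$; so the active core sets at any point of $J(G)$ are nested; (iii) a vertex with all coordinates positive lies on at least $m$ shaving hyperplanes, so the nested family is a complete flag of core subgraphs with $1,2,\ldots,m$ edges; and (iv) a one-edge core subgraph is a loop, and each edge added along the flag must again be a loop (a non-loop edge would separate the core subgraph), so all edges of $G$ are loops and $G$ is a rose. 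Once this is supplied, the rest of your plan goes through; one small caution in your inductive identification $J(G)\cap\csigma(G\sslash F)=J(G\sslash F)$ is that the map $A\mapsto A\setminus F$ does not preserve rank on all cores (collapsing $F$ can identify vertices of $A$ and raise the rank of the image), so the rank-preserving correspondence must use the maximal lift $A'\mapsto\core(A'\cup F)$, with the constraints coming from non-maximal cores in a fiber being strictly weaker and hence redundant.
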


\begin{proof}
Since $J(G)$ is a convex polytope, it is the convex hull of its vertices. Each vertex lies in the interior of some (not necessarily proper) face $\tau$ of $\csigma(G),$ so it suffices to show that $\tau$ must be a rose face. The face $\tau$ corresponds to a subgraph $H\subseteq G$, i.e. $\tau=\csigma(H)$.  If $\tau$ were at infinity, then the complement $H^c$ would contain a non-trivial core subgraph $C$.  Since the face $\csigma(C^c)$ opposite $\csigma(C)$ is truncated when forming $J(G)$ and  $H\subseteq C^c,$ the face  $\tau=\csigma(H)\subseteq\csigma(C^c)$ would not intersect $J(G)$, so would not contain any vertices of $J(G)$.  Therefore $\tau$ is not at infinity, i.e. $H^c$ is a forest and $\csigma(G\sslash H^c)<\sigma(G)$.  If we choose the constants carefully we in fact have $J(G\sslash H^c)=J(G)\cap \tau.$  This reduces the problem to showing that for any graph $G$, if $J(G)$ has a vertex in the interior of $\csigma(G)$ then $G$ is a rose.

In the following we identify faces of $\sigma(G)$ with subsets of $\{0,\ldots,m\}$, and call such a subset {\em core} if it corresponds to a core subgraph. Suppose $y=(y_0,\ldots,y_m)$ is a vertex of $J(G)$ with all $y_i$ positive.  Then $y$ lies on bounding hyperplanes $\mathcal H_A$  for some collection $\mathcal S(y)$ of core subsets $A\subset \{0,\ldots,m\}$, i.e. $y$ satisfies $\sum_{i\in A}y_i=c_{A}$ for all $A\in \mathcal S(y)$.  

Note that the union of two core subgraphs  is always a core subgraph. Thus if $A$ and $B$ are in $\mathcal S(y),$ then $(A\cup B)^c$ is shaved. So for all $x\in J(G)$ 
 $$\sum_{k\in A\cup B}x_k\geq c_{A\cup B}.$$
Since $\sum_{i\in A}y_i= c_{A}$ and $\sum_{j\in B}y_j= c_{B}$ we have
\[
  \sum_{k\in A\cup B}y_k\leq c_{A}+ c_{B}.
\]
If $A$ and $B$ are both proper subsets of $A\cup B$, then each of $c_A$ and $c_B$ is less than half of $c_{A\cup B}$, so
\(
  c_{A} + c_{B} < c_{A \cup B}
\),
giving
$$ \sum_{k\in A\cup B}y_k < c_{A\cup B},$$  This contradicts the assumption that $y$ is in $J(G).$

Thus the subsets of $\mathcal S(y)$ are nested, i.e. form a flag.  Since a vertex is the intersection of at least $m$ hyperplanes, $\mathcal S(y)$ contains at least $m$ proper subsets, so up to permutation the flag is $\{0\},\{0,1\},\{0,1,2\}\ldots,\{0,1,\ldots m-1\}$.  
Thus $\{e_0\}$ is a core subgraph, so it must be a loop.  Since $\{e_0,e_1\}$ also forms a core subgraph, and core subgraphs have no separating edges, $e_1$ is also a loop.  Continuing, we get that $e_0,\ldots,e_{m-1}$ are all loops, which implies that $e_{m}$ too is a loop, since $G$ has no separating edges.  Since $G$ is connected and all edges are loops, we conclude that $G$ is a rose (and $n=m+1$).
\end{proof}
 
\subsection{Fitting jewels together to form $\BVn$}\label{fitting}
Suppose $(G',g')$ can be obtained from $(G,g)$ by collapsing the edges of some subforest  $\Phi$ of $G$ to points.  Then for appropriate truncating constants, $J(G',g')$ is a face of $J(G,g)$. Specifically, we need  $c_{A'}=c_A$ where $A=\core(A'\cup\Phi)$  is the largest core graph in $G$ mapping to $A'$. To make the constants $c_A$ consistent over all $J(G,g)$ containing $J(G',g')$, we assume that $c_{A}$  depends only the rank of $H_1(A)$. This works  because $\rank H_1(A')=\rank H_1(A)$ and if $A \subsetneq B$ are core graphs  one needs to remove at least one edge of $B$ to get $A$; this decreases the rank of $H_1(B)$ since $B$ has no separating edges.

Let $\BVn$ denote the union of the cells $J(G,g)$ for all marked graphs $(G,g)$ in $\On$.  We claim that this a closed subspace of $\On$ which is a deformation retract. To see this, recall that $\On$ is the union of the simplices $\osigma(G,g)$ glued together using the   same face relations,  each cell $J(G,g)$ is evidently a deformation retract of the closure $\cOsigma(G,g)$ of $\osigma(G,g)$ in $\On$, and the deformation retraction can be taken to restrict to deformation retractions of all faces $\cOsigma(G',g')$.    Figure~\ref{OS2} shows the relation between the spaces $\On, \BVn$ and the spine $K_n$ of $
\On$ for the case $n=2$. Here the Euclidean simplices have been deformed for artistic convenience so that they fit into the Poincar\'e disk as hyperbolic triangles. 

The following statement is an immediate corollary of Proposition~\ref{prop:rosefaces}.

\begin{corollary}\label{cor:vertices}
The vertices of $\BVn$ are the {\em marked ordered roses} $(g,R,e_1,\ldots,e_n)$, where   $(R,g)$  is a marked rose and $e_1,\ldots,e_n$ is an (ordered) list of the petals of $R$.
\end{corollary}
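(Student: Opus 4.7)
The plan is to combine Proposition~\ref{prop:rosefaces} with the gluing discussion of Section~\ref{fitting}, which reduces the statement to identifying the vertices of a single permutohedron with orderings of its petals. Every vertex of $\BVn$ must be a vertex of some jewel $J(G,g)$; by Proposition~\ref{prop:rosefaces} such a vertex lies in a rose face of $\csigma(G,g)$, and is in fact a vertex of the permutohedron sitting in that face. A rose face is obtained by collapsing a maximal subforest of $G$, producing a marked rose $(R, g')$; by the consistency of the constants $c_A$ (chosen to depend only on $\rank H_1(A)$) arranged in Section~\ref{fitting}, the permutohedron in that rose face coincides with the jewel $J(R, g')$. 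So the vertices of $\BVn$ are parametrized by pairs consisting of a marked rose $(R, g')$ together with a vertex of $J(R, g')$.

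The next step is to identify vertices of $J(R, g')$ with orderings of the petals of $R$. This is essentially already done at the end of the proof of Proposition~\ref{prop:rosefaces}: a vertex $y$ of $J(R)$ must saturate a complete nested chain of shaving inequalities
\[
  \{e_{i_1}\} \subset \{e_{i_1}, e_{i_2}\} \subset \cdots \subset \{e_{i_1}, \ldots, e_{i_{n-1}}\},
\]
which amounts to an ordering $(e_{i_1}, \ldots, e_{i_n})$ of the petals of $R$. Conversely, any such ordering determines a unique point of $\csigma(R)$ by saturating the corresponding $n-1$ hyperplanes together with the volume-one constraint, and a short check (using that smaller core subsets give strictly weaker constraints, just as in the proof of Proposition~\ref{prop:rosefaces}) confirms this point lies in $J(R)$. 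The counts match: the permutohedron of rank $n$ has $n!$ vertices, and there are $n!$ orderings of the petals.

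The step I expect to be the only genuine obstacle is a well-definedness check: the same marked rose $(R, g')$ arises as the collapse of many different $(G, g)$, and I need the vertex of $J(R, g')$ corresponding to a given ordering to define the same point of $\BVn$ regardless of which ambient jewel $J(G,g)$ is used. This is precisely what Section~\ref{fitting} provides, since $J(R, g')$ is a common face of every cell $J(G, g)$ in which the rose face appears. With this in hand, the desired bijection between vertices of $\BVn$ and marked ordered roses follows.
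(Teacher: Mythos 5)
Your proposal is correct and follows the same path the paper intends: the paper simply declares the corollary ``immediate'' from Proposition~\ref{prop:rosefaces}, and your argument is precisely the unpacking of that claim --- locate vertices in rose faces, identify the permutohedron with a jewel of a rose via the consistent choice of constants in Section~\ref{fitting}, and then read off the petal ordering from the saturated flag of core subsets as worked out at the end of the proof of Proposition~\ref{prop:rosefaces}.
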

 
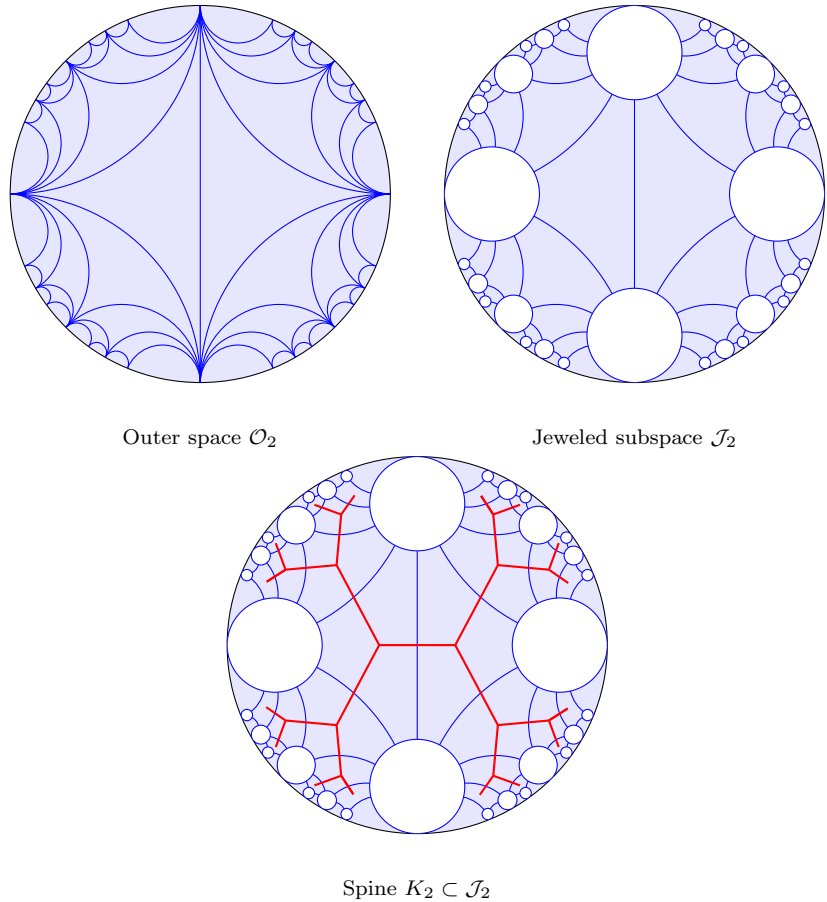
\begin{figure}
\begin{center}
\begin{tikzpicture}[scale=2.5]
\draw [fill=blue!10] (0,0) circle (1);
\begin{scope}
\clip (0,0) circle (1);
\hgline{0}{180};
\foreach \x in {0,...,3}
\hgline {0+90*\x}{90+90*\x};
\foreach \x in {0,...,7}  
\hgline {0+45*\x}{45+45*\x};
\foreach \x in {0,...,3} 
{
\hgline {0+90*\x}{30+90*\x}; 
\hgline {30+90*\x}{45+90*\x};
\hgline {45+90*\x}{60+90*\x}; 
\hgline {60+90*\x}{90+90*\x};
}
\foreach \x in {0,...,3} 
{
\hgline {0+90*\x}{22.5+90*\x}; 
\hgline {22.5+90*\x}{30+90*\x}; 
\hgline {30+90*\x}{36+90*\x};
\hgline {36+90*\x}{45+90*\x};
\hgline {45+90*\x}{54+90*\x};
\hgline {54+90*\x}{60+90*\x};
\hgline {60+90*\x}{67.5+90*\x};
\hgline {67.5+90*\x}{90+90*\x};
}
\end{scope}
\node (a) at (0,-1.3) {Outer space $\mathcal O_2$};
\end{tikzpicture} 
\qquad
\begin{tikzpicture}[scale=2.5]
\draw [fill=blue!10] (0,0) circle (1);
\begin{scope}
\clip (0,0) circle (1);
\hgline{0}{180};
\foreach \x in {0,...,3}
\hgline {0+90*\x}{90+90*\x};
\foreach \x in {0,...,7}  
\hgline {0+45*\x}{45+45*\x};
\foreach \x in {0,...,3} 
{
\hgline {0+90*\x}{30+90*\x}; 
\hgline {30+90*\x}{45+90*\x};
\hgline {45+90*\x}{60+90*\x}; 
\hgline {60+90*\x}{90+90*\x};
};
\foreach \x in {0,...,3} 
{
\hgline {0+90*\x}{22.5+90*\x}; 
\hgline {22.5+90*\x}{30+90*\x}; 
\hgline {30+90*\x}{36+90*\x};
\hgline {36+90*\x}{45+90*\x};
\hgline {45+90*\x}{54+90*\x};
\hgline {54+90*\x}{60+90*\x};
\hgline {60+90*\x}{67.5+90*\x};
\hgline {67.5+90*\x}{90+90*\x};
}
\foreach \x in {0,...,3} 
{
\draw [blue, fill=white] (0+90*\x:.75) circle (.25); 
\draw [blue, fill=white] (45+90*\x:.9) circle (.1); 
\draw [blue, fill=white] (30+90*\x:.95) circle (.05); 
\draw [blue, fill=white] (60+90*\x:.95) circle (.05); 
\draw [blue, fill=white] (36+90*\x:.97) circle (.03);
\draw [blue, fill=white] (22.5+90*\x:.97) circle (.03);  
\draw [blue, fill=white] (54+90*\x:.97) circle (.03);  
\draw [blue, fill=white] (67.5+90*\x:.97) circle (.03);  
}
\end{scope}
\node (b) at (0,-1.3) {Jeweled subspace $\mathcal J_2$};
\end{tikzpicture} 
\qquad
\begin{tikzpicture}[scale=2.5]
\draw [fill=blue!10] (0,0) circle (1);
\begin{scope}
\clip (0,0) circle (1);
\hgline{0}{180};
\foreach \x in {0,...,3}
\hgline {0+90*\x}{90+90*\x};
\foreach \x in {0,...,7}  
\hgline {0+45*\x}{45+45*\x};
\foreach \x in {0,...,3} 
{
\hgline {0+90*\x}{30+90*\x}; 
\hgline {30+90*\x}{45+90*\x};
\hgline {45+90*\x}{60+90*\x}; 
\hgline {60+90*\x}{90+90*\x};
}
\foreach \x in {0,...,3} 
{
\hgline {0+90*\x}{22.5+90*\x}; 
\hgline {22.5+90*\x}{30+90*\x}; 
\hgline {30+90*\x}{36+90*\x};
\hgline {36+90*\x}{45+90*\x};
\hgline {45+90*\x}{54+90*\x};
\hgline {54+90*\x}{60+90*\x};
\hgline {60+90*\x}{67.5+90*\x};
\hgline {67.5+90*\x}{90+90*\x};
}
\foreach \x in {0,...,3} 
{
\draw [blue, fill=white] (0+90*\x:.75) circle (.25); 
\draw [blue, fill=white] (45+90*\x:.9) circle (.1); 
\draw [blue, fill=white] (30+90*\x:.95) circle (.05); 
\draw [blue, fill=white] (60+90*\x:.95) circle (.05); 
\draw [blue, fill=white] (36+90*\x:.97) circle (.03);
\draw [blue, fill=white] (22.5+90*\x:.97) circle (.03);  
\draw [blue, fill=white] (54+90*\x:.97) circle (.03);  
\draw [blue, fill=white] (67.5+90*\x:.97) circle (.03);  
}
\draw [red, thick] (0:-.2) to (0:.2);
\draw [red, thick] (0:.2) to (45:.6);
\draw [red, thick] (0:.2) to (-45:.6);
\draw [red, thick] (0:-.2) to (135:.6);
\draw [red, thick] (0:-.2) to (-135:.6);
\foreach \x in {0,...,3}
{
\draw [red, thick] (45+\x*90:.6) to (60+\x*90:.8);
\draw [red, thick] (45+\x*90:.6) to (30+\x*90:.8);
\draw [red, thick] (60+\x*90:.8) to (67+\x*90:.86);
\draw [red, thick] (60+\x*90:.8) to (67+\x*90:.86);
\draw [red, thick] (30+\x*90:.8) to (36+\x*90:.92);
\draw [red, thick] (30+\x*90:.8) to (22.5+\x*90:.86);
\draw [red, thick] (60+\x*90:.8) to (54+\x*90:.92);
}
\end{scope}
\node (c) at (0,-1.3) {Spine $K_2\subset \mathcal J_2$};
\end{tikzpicture} 
\caption{Outer space, the subspace of jewels and the spine, for $n=2$}\label{OS2}
\end{center}
\end{figure}

\section{Simplicial completion of $\BVn$ and a further retract}

We next want to investigate the connectivity properties of $\BVn$  at infinity, in particular to prove

\begin{theorem}\label{thm:Jmain} The space $\BVn$ is $(2n-5)$-connected at infinity.  
\end{theorem}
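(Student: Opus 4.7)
The plan is to reduce the connectivity-at-infinity claim for $\BVn$ to the connectivity of a small combinatorial complex. First I would construct a \emph{simplicial completion} $\overline{\BVn}$ of $\BVn$ by adjoining, for each jewel $J(G,g)$ and each core subgraph $A\subsetneq G$, an ideal simplex which caps the shaved face $J(G,g)\cap\mathcal H_A$. Because the truncating constants $c_A$ depend only on $\rank H_1(A)$, the caps glue compatibly across the face relations between jewels, so $\overline{\BVn}$ is a well-defined simplicial $\Outn$-space. One then checks that $\overline{\BVn}$ is compact (its quotient by $\Outn$ is a finite complex), that it is contractible, and that the inclusion $\BVn\hookrightarrow\overline{\BVn}$ admits a collar of the frontier $\partial_\infty\BVn:=\overline{\BVn}\setminus\BVn$ inherited from the polytopal structure of the jewels.

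The collar structure implies that $\BVn$ is $(2n-5)$-connected at infinity if and only if $\partial_\infty\BVn$ is $(2n-5)$-connected: any sphere in $\BVn$ outside a large compact set can be pushed into $\partial_\infty\BVn$, and a filling disc can be produced in $\BVn$ from one in $\partial_\infty\BVn$ by pushing off the frontier. So the theorem reduces to showing that $\partial_\infty\BVn$ is $(2n-5)$-connected. At this point I would construct the ``further retract'' hinted at in the section title: an equivariant deformation retract of $\partial_\infty\BVn$ onto a combinatorial complex $\mathcal C_n$ whose simplices are chains $A_0\subsetneq\cdots\subsetneq A_k\subsetneq G$ of proper non-trivial core subgraphs of a marked graph $(G,g)$, modulo the equivalence $(G,g;\{A_i\})\sim(G\sslash\Phi,g\circ p;\{A_i\})$ whenever $\Phi$ is a forest of $G$ disjoint from each $A_i$. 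The retraction sweeps each shaved cap of $J(G,g)$ onto the face of $\mathcal C_n$ recording which bounding hyperplanes $\mathcal H_A$ are active there, using the flag structure on the set of such hyperplanes established in the proof of Proposition~\ref{prop:rosefaces}.

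Finally, I would prove that $\mathcal C_n$ is $(2n-5)$-connected by induction on $n$, stratifying the complex by the smallest core subgraph $A_0$ appearing in each chain and covering $\mathcal C_n$ by the stars of these strata. When $A_0$ is a single loop, the link of the corresponding stratum is essentially a join of $\mathcal C_{n-1}$ with a complex of free factors of $F_{n-1}$, whose $(n-2)$-connectivity boosts the bound from the inductive hypothesis; larger $A_0$ contribute higher connectivity and are absorbed into the induction via a standard nerve argument. The main obstacle, and the step where the proof is expected to be considerably shorter than that of \cite{BF}, is precisely this final combinatorial connectivity estimate: the Morse-theoretic machinery in Bestvina--Feighn is replaced by a direct chain-of-core-subgraphs calculation on the poset $\mathcal C_n$, and all of the real work lies in verifying that no stratum drops the connectivity below $2n-5$.
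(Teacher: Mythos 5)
Your reduction collapses at the first step, because it misidentifies where infinity of $\BVn$ actually lies. The completion $\overline{\BVn}$ you propose is not compact: finiteness of the quotient by $\Outn$ gives only cocompactness, and $\Outn$ is infinite, so the asserted equivalence ``$\BVn$ is $(2n-5)$-connected at infinity if and only if the frontier $\overline{\BVn}\setminus\BVn$ is $(2n-5)$-connected'' is not available by any collar argument. More fundamentally, the shaved faces of the jewels are honest faces of $\BVn$ lying inside a compact fundamental domain; the ideal simplices you want to glue on cap the faces at infinity of the individual closures $\cOsigma(G,g)$ in Outer space, which are precisely the directions the jewel construction is designed to stay away from. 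Going to infinity in $\BVn$ does not mean approaching those capped faces: it means leaving every compact set, i.e.\ running through infinitely many marked graphs with increasingly complicated markings, and the complement of a compact set contains entire translates of the fundamental domain. Such complements are in no way cofinally homotopy equivalent to neighborhoods of your frontier. What you call $\partial_\infty\BVn$ is essentially the simplicial boundary of Outer space (equivalently, the boundary of the bordification, related to the free splitting complex), whose homotopy type is exactly the separate question the paper defers to its sequel; its connectivity would neither imply nor be implied by Theorem~\ref{thm:Jmain} without the compactness and collaring you cannot have here. Consequently the later steps (the retraction of the frontier onto the chain-of-core-subgraphs complex $\mathcal C_n$, and the inductive connectivity estimate for $\mathcal C_n$, which is itself only sketched) address the wrong target.

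For contrast, the paper's argument keeps infinity where it is: it replaces $\BVn$ by the homotopy-equivalent-at-infinity rose complex $\bR_n$ (both contractible with proper cocompact $\Outn$-actions, so Theorem~\ref{thm:Jmain} is equivalent to Theorem~\ref{thm:Rmain}), well-orders the marked roses by the norm-type Morse function $\mu$, and notes that the subcomplexes $\bR_{\geq i}$ spanned by roses of large $\mu$-value are cofinal among complements of compacta. Each $\bR_{\geq i}$ is shown to be $(2n-5)$-connected by descending induction, coning off ascending links, so the whole theorem rests on showing each ascending link $\lk^+(\rho)$ is $(2n-4)$-spherical (Theorem~\ref{thm:link}); that is where the combinatorial work happens, via the complex of ascending ideal edges and the Key Lemma. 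If you want a proof along your lines, you would need to replace your frontier by a genuinely cofinal exhaustion of $\BVn$ by complements of compacta, which is exactly the role the Morse function plays.
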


 For this it is convenient to replace $\BVn$ by a simplicial complex so that we may use simplicial Morse theoretic arguments.   
We do this in two stages.  First, replace each cell $J(G,g)$ by a simplex $s(G,g)$ with the same vertices to obtain a simplicial complex $\bS_n$ (this was also done in \cite{BF}, at the end of the paper).  

\begin{lemma} $\bS_n$ is contractible, and $\Outn$ acts properly and cocompactly.
\end{lemma}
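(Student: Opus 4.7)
The plan is to handle the group action first and then contractibility. For the action, I note that the $\Outn$-action on $\On$ permutes marked graphs $(G,g)$ via precomposition of the marking with an outer automorphism, so it permutes the cells $J(G,g)$ of $\BVn$ and, by Corollary~\ref{cor:vertices}, permutes the vertex set of $\bS_n$ (marked ordered roses); this extends to a simplicial action on $\bS_n$. Properness is inherited from properness of the action on $\On$. For cocompactness, there are only finitely many isomorphism classes of connected reduced graphs of rank $n$, giving finitely many $\Outn$-orbits of top-dimensional cells in both $\BVn$ and $\bS_n$, and each such cell is a compact polytope (resp.~simplex), so both actions are cocompact.

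For contractibility I would first invoke Subsection~\ref{fitting}: $\BVn$ is already known to be contractible as an equivariant deformation retract of the contractible space $\On$. To transfer this to $\bS_n$, I plan to construct a continuous $\Outn$-equivariant map $\pi\colon\bS_n \to \BVn$ that is the identity on the common vertex set and extends affinely on each simplex: on $s(G,g)$, $\pi$ is the unique affine surjection onto the jewel $J(G,g)\subset\csigma(G)$ sending each vertex of $s(G,g)$ to itself as a vertex of $J(G,g)$. Well-definedness across $\bS_n$ requires checking that when a simplex with vertex set $V$ is a face of both $s(G_1,g_1)$ and $s(G_2,g_2)$, the two affine extensions agree on $V$. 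This should follow from the face structure spelled out in Subsection~\ref{fitting}: the shared vertices already lie in a common face $J(G',g')$ coming from simultaneous forest collapses of both $G_i$, and both affine extensions restrict to the affine map into $J(G',g')$ determined by identity on vertices.

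To show $\pi$ is a homotopy equivalence, I would check that every point-preimage $\pi^{-1}(y)$ is contractible and invoke the Vietoris--Begle mapping theorem. For $y$ in the interior of a unique $J(G,g)$, the fiber inside $s(G,g)$ is the intersection of an affine flat with the simplex, hence a convex polytope. For $y$ on a shared boundary, the fiber meets several maximal simplices, but the pieces are glued along convex faces dictated by the jewel face poset, and the union should deformation retract (via convex combinations) onto a single convex piece. Since both spaces are CW complexes, weak equivalence upgrades to homotopy equivalence, and contractibility of $\bS_n$ follows.

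The hard part will be the topological analysis of $\pi$-fibers on shared boundaries between several maximal jewels---verifying that these assembled unions of convex polytopes are indeed contractible. A cleaner alternative, and possibly the route the authors take, is to construct an explicit equivariant section $\sigma\colon\BVn\hookrightarrow\bS_n$ and verify $\sigma\circ\pi\simeq\mathrm{id}$ via affine straight-line homotopies within each simplex, bypassing Vietoris--Begle entirely; constructing such a section simultaneously equivariantly and compatibly across cells would then be the main task.
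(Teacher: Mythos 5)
Your route is genuinely different from the paper's, which settles the whole lemma in a few lines by the ``complex of contractible spaces'' argument (Hatcher \S4.G): $\bS_n$, $\BVn$ and $\On$ are each built from contractible pieces ($s(G,g)$, $J(G,g)$, $\cOsigma(G,g)$ respectively) glued along the same contractible nerve, the spine $K_n$, so all three are contractible; cocompactness follows because the pieces are compact and $K_n$ is cocompact. By contrast you propose to construct an explicit $\Outn$-equivariant map $\pi\colon\bS_n\to\BVn$ sending each $s(G,g)$ affinely onto $J(G,g)$ and to show $\pi$ is a homotopy equivalence by examining fibers. That part of your argument (definition and well-definedness of $\pi$, and the $\Outn$-action) is fine, but you stop short of verifying that the fibers over the glued-up boundary strata are contractible, and you explicitly flag that as the remaining hard part. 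As written, then, there is a genuine gap.

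The gap is, however, easier to close than you anticipate, and the reason is worth recording because you slightly misdiagnose the shape of the fiber. Fix $y\in\BVn$ and let $F$ be the unique face of a jewel whose relative interior contains $y$. If $v$ is any vertex of $s(G,g)$ with $\pi(v)\notin F$, then some defining inequality of $J(G,g)$ holds with equality at $y$ and strictly at $\pi(v)$; hence for any $x\in s(G,g)$ with $\pi(x)=y$ the barycentric coordinate of $x$ at $v$ must vanish. Thus $\pi^{-1}(y)\cap s(G,g)$ is contained in the single subsimplex $s_F$ spanned by the vertices of $F$, where it is the intersection of an affine flat with a simplex, i.e.\ one convex polytope. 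Moreover $F$ is the same for every jewel $J(G,g)$ that contains $y$ (a face of a face is a face, and relative interiors of faces partition the jewel), so the whole fiber $\pi^{-1}(y)$ equals this one polytope — there is no ``assembled union of pieces from several maximal simplices'' to analyze. With this observation the Vietoris--Begle route closes cleanly; but the nerve argument the paper uses remains substantially shorter and sidesteps any point-set delicacies about the map $\pi$ and its section.
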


\begin{proof}
The spaces $\bS_n, \BVn$ and $\On$ are all complexes of contractible spaces (i.e. the spaces $s(G,g), J(G,g)$ and $\cOsigma(G,g)$ respectively) and all have the same nerve, namely the spine $K_n$.  Since $K_n$ is contractible,  $\bS_n, \BVn$ (and $\On$) are also contractible (see, e.g., \cite{Hatcher}, section 4.G).
The action permutes the cells of $\bS_n$ and $\BVn$, and there is a compact fundamental domain since $K_n$ is cocompact  and the cells of $\bS_n$ and $\BVn$ are compact.
\end{proof}

Now let $\bR_n$ be the simplicial complex whose vertices are the marked roses in $\On$, and where roses are in the same simplex if and only if they  can be obtained from a common $(G,g)$ by collapsing  maximal trees.

\begin{corollary} $\bR_n$ is contractible, and the action of $\Outn$ on it is proper and cocompact.
\end{corollary}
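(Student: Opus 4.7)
The plan is to mirror the proof of the preceding lemma, substituting $\bR_n$ for $\bS_n$.  For each marked graph $(G,g)\in\On$, let $s_R(G,g)$ denote the simplex of $\bR_n$ spanned by the marked roses obtained from $(G,g)$ by collapsing maximal trees.  By construction $\bR_n=\bigcup_{(G,g)} s_R(G,g)$, and each $s_R(G,g)$, being a simplex, is contractible.

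The key step will be to verify that the face poset of this cover matches that of the simplices $\osigma(G,g)$, jewels $J(G,g)$, and simplices $s(G,g)$ used in the preceding lemma, so that $\bR_n$ becomes a complex of contractible spaces whose nerve is once more the spine $K_n$.  The needed claim is that $s_R(G',g')$ is a face of $s_R(G,g)$ precisely when $(G',g')$ is obtained from $(G,g)$ by collapsing a forest: in the forward direction, the maximal trees of $G'$ are exactly the images of the maximal trees of $G$ that contain the collapsed forest, so every rose quotient of $(G',g')$ is also a rose quotient of $(G,g)$; conversely, any common rose quotient of two marked graphs forces them to share a common collapse.  Once this is in hand, the appeal to \cite[section~4.G]{Hatcher} made in the preceding lemma yields contractibility of $\bR_n$.

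For the group action, $\Outn$ permutes the simplices $s_R(G,g)$ through its action on marked graphs; cocompactness is then inherited from the fact that $K_n$ has a compact fundamental domain and each $s_R(G,g)$ is compact, while properness is inherited from the proper action on $\On$, since the stabilizers of marked roses are finite.  The delicate point I anticipate is the converse half of the face-relation claim --- a priori, two incomparable marked graphs might give rise to the same collection of rose quotients, which would break the identification of the nerve with $K_n$ --- but this should be ruled out by the rigidity supplied by the markings, which reconstruct $(G,g)$ from its set of rose quotients.
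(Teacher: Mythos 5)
Your route is genuinely different from the paper's, and it has a gap. The paper does not redo the complex--of--spaces argument for $\bR_n$; instead it observes that the map $\bS_n\to\bR_n$ which forgets the ordering on the petals of a rose is a simplicial map whose preimage of every simplex is itself a simplex, and then invokes \cite[Cor.~2.7]{HV} to conclude it is a homotopy equivalence. Since $\bS_n$ was already shown to be contractible with proper cocompact action, the corollary follows immediately, with no need to identify the nerve of a cover of $\bR_n$.

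The gap in your version is the claim you yourself flag and then wave away: that two distinct marked graphs $(G,g)\neq(G',g')$ cannot yield the same simplex $s_R(G,g)=s_R(G',g')$, i.e.\ that a point of the spine $K_n$ can be reconstructed from its set of \emph{unordered} rose quotients. You assert this "should be ruled out by the rigidity supplied by the markings," but no proof or citation is offered, and it is not a formal triviality --- it amounts to showing that the set of ideal trees realizing a given marked graph over all its rose quotients determines that marked graph, which is roughly the content of the uniqueness statement the paper extracts from Lemma~\ref{consequences-of-equality} in the proof of Proposition~\ref{prop:ascending}, but in a different form. Without this, the identification of the poset of cells $\{s_R(G,g)\}$ with $K_n$ fails, and so does the appeal to Hatcher. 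Moreover, even granting injectivity, you would still need to check that $\bR_n$ really is assembled from the $s_R(G,g)$ according to the face poset (i.e.\ that it is the colimit of that diagram, not merely a union); you do not address this. The paper's proof via the forgetful map from $\bS_n$ avoids all of these difficulties, because in $\bS_n$ the cells are already in bijection with the jewels $J(G,g)$ and the relevant poset structure was established when $\BVn$ was built.

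Your remarks on properness and cocompactness are fine and routine, and the forward direction of your face-relation claim is correct; the problem is solely the unproven reconstruction claim on which the rest hinges.
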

        \begin{proof} The map sending vertices of $\bS_n$ to vertices $\bR_n$  by forgetting the ordering on the petals of the rose extends to a simplicial map and the inverse image of each simplex is itself a simplex, so the map is a homotopy equivalence (see, e.g., \cite{HV}, Corollary 2.7).
\end{proof}

Since both $\BVn$ and  $\bR_n$ are contractible with proper cocompact $Out(F_n)$-actions,  Theorem~\ref{thm:Jmain} is equivalent to the following.

\begin{theorem}\label{thm:Rmain} The simplicial complex $\bR_n$ is $(2n-5)$-connected at infinity.  
\end{theorem}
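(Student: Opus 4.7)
My plan is to prove Theorem~\ref{thm:Rmain} by a discrete Morse argument on the simplicial complex $\bR_n$. First observe that $\bR_n$ is locally finite: by Corollary~\ref{cor:vertices}, the vertices of any simplex arise as the set of roses obtained by collapsing maximal trees of some common marked graph $(G,g)$, and since $|E(G)| \leq 3n-3$ there are only finitely many such configurations containing a given rose $R$. Fix a basepoint rose $R_0 \in \bR_n^{(0)}$ corresponding to a free basis $\mathcal{B}_0 = \{a_1,\ldots,a_n\}$ of $F_n$, and define a proper function $f: \bR_n^{(0)} \to \Z_{\geq 0}$ by assigning to a marked rose $R$ the sum of the cyclic $\mathcal{B}_0$-word lengths of the petals of $R$. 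The super-level sets $f^{-1}([t,\infty))$ then form a cofinal family of neighborhoods of infinity in $\bR_n$.

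To establish $(2n-5)$-connectivity at infinity, it suffices to show that for all sufficiently large $t$, the super-level set $f^{-1}([t,\infty))$ is $(2n-5)$-connected. The standard strategy is to build $f^{-1}([t,\infty))$ from a smaller super-level set $f^{-1}([t',\infty))$ (with $t' \gg t$) by adding vertices $R$ with $f(R)\in [t,t')$ one at a time in order of decreasing $f$-value, attaching each new vertex along its \emph{upper link}---the subcomplex of $\mathrm{Lk}_{\bR_n}(R)$ spanned by simplices whose vertices are all roses with $f$-value strictly greater than $f(R)$. The key lemma would then be: for any rose $R$ with $f(R)$ sufficiently large, the upper link of $R$ is $(2n-6)$-connected. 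Granting this, standard Morse-theoretic bookkeeping yields that $f^{-1}([t,\infty))$ is $(2n-5)$-connected, and hence that $\bR_n$ is $(2n-5)$-connected at infinity.

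The principal technical obstacle is the computation of the upper link of $R$. Roses adjacent to $R$ in $\bR_n$ correspond to ``Whitehead neighbors'' obtained by blowing up $R$ to a graph $(G,g)$ and re-collapsing to a different maximal tree; the vertices of the upper link are exactly those Whitehead neighbors that strictly increase $f$. I would aim to identify the upper link with (or exhibit a cofinal subcomplex inside) a highly-connected complex naturally associated to the basis determined by $R$, such as a variant of the sphere/splitting complex of Hatcher-Vogtmann \cite{HV}, or a suitable Whitehead poset. From the established connectivity bounds for such complexes one should be able to extract the required $(2n-6)$-connectivity. The main subtlety lies in the interaction between the word-length ordering on roses and the Whitehead-move combinatorics of blow-ups; this is also where a proof substantially simpler than the original \cite{BF} argument should become possible, thanks to the clean, explicit description of $\bR_n$ in terms of marked roses.
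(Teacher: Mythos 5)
Your high-level strategy—Morse theory on $\bR_n$ with a word-length-type height function, reducing connectivity at infinity to connectivity of links—is indeed the one the paper uses. But there are several gaps that, taken together, mean the hard work has been deferred rather than done.

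First, the direction of your induction has no base case and the claimed link connectivity is off by one. You propose to build $f^{-1}([t,\infty))$ \emph{up} from $f^{-1}([t',\infty))$ with $t'\gg t$, concluding that $(2n-6)$-connected upper links give $(2n-5)$-connectivity. But building up from a super-level set requires you to already know that super-level set is $(2n-5)$-connected, which is exactly what is to be proved; iterating over larger and larger $t'$ never bottoms out. The paper instead runs the induction \emph{downward}: it starts from the whole contractible complex $\bR_n$ and removes vertices one at a time in increasing order of the Morse function, showing at each step that removing the bottom vertex and its star preserves $(2n-5)$-connectedness. For that Mayer-Vietoris/van Kampen step one needs the ascending link to be $(2n-5)$-connected (not $(2n-6)$-connected), which the paper establishes by proving the ascending link is $(2n-4)$-spherical (Theorem~\ref{thm:link}). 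Second, your height function $f$ is $\Z$-valued and will have many ties, including ties between adjacent roses (Whitehead moves can preserve total cyclic length); ``adding vertices one at a time in order of decreasing $f$-value'' is then not well defined, and when a vertex is added it is not determined whether equal-valued neighbors belong to its upper link. The paper sidesteps this by using the function $\mu$ with values in the lexicographically ordered group $\R\times\R^{\cW}$, and Lemma~\ref{well} (resting on the Culler–Morgan/Alperin–Bass rigidity of translation length functions) shows $\mu$ is injective and well-orders the vertices, so the one-at-a-time filtration is canonical.

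Finally, the substantive content of the theorem—that the ascending/upper link of a rose is highly connected—is exactly what you flag as ``the principal technical obstacle'' and then leave unproved beyond an analogy to sphere/splitting complexes. In the paper this step occupies everything from Section~\ref{Morse} through Section~\ref{sec:proof}: one must identify $\lk^+(\rho)$ with the flag complex $\bZ(\rho)$ of \emph{ascending ideal edges} (Proposition~\ref{prop:ascending}, via the Factorization Lemma and the Poset Lemma), then prove $\bZ(\rho)$ is $(2n-4)$-spherical (Theorem~\ref{thm:main}) by a delicate double induction over auxiliary complexes $\bZ(V)$, using the Key Lemma about the dot product of star graphs to control which ideal edges ascend. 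Your appeal to ``established connectivity bounds'' for a complex that you have not identified is not a substitute for this; the whole point is that the upper link is not one of the standard complexes off the shelf, and controlling the interplay between the norm $\mu$ and the ideal-edge combinatorics is precisely where the effort lies.
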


\section{The Morse function  and ascending links}\label{Morse} 

We are now ready to attack connectivity at infinity, using Morse theory.  We begin by defining a {\em Morse function} $\mu$ on the vertices of $\bR_n$ with values in a certain ordered abelian group.   

\subsection{The Morse function}

Let $\rho=(R,g)$ be a vertex of $\bR_n,$ i.e. a marked rose, and suppose $F_n$ is generated by $x_1,\ldots,x_n.$ Let $\cW_0$ be the set of conjugacy classes of elements of the form $x_i$ or $x_ix_j$ or $ x_ix_j^{-1}$ for $i\neq j$, and let $\cW=\{w_1,w_2,w_3,\ldots\}$  be a list of {\em all} the  conjugacy classes in $F_n$.  

Given a conjugacy class $w$ and an edge $e\in R$, define $|e|_w$ to be the minimum, over all loops $\gamma$ homotopic to $g(w)$, of the number of times $\gamma$ crosses $e$ in either direction.   Then define $$|\rho|_w=\sum_{e\in R} |e|_w$$
and $$|\rho|_0=\sum_{w\in \cW_0} |\rho|_w.$$   Finally, define 
$$\mu(\rho)=(|\rho|_0,|\rho|_{w_1}, |\rho|_{w_2},\ldots) \in \R\times \R^{\cW}$$
Here $\R\times \R^{\cW}$ is an ordered abelian group with the lexicographical order.

\begin{lemma}\label{well}
\begin{enumerate}
\item $|\rho|_0>0.$
\item $|\rho|_{w_i}>0$ for all $i$.
\item If $\mu(\rho)=\mu(\rho')$ then $\rho=\rho'.$
\item At most finitely many $\rho$ have $\mu(\rho)\leq N$ for a given $N.$
\item $\mu$ well-orders the vertices of $\bR_n$.
\end{enumerate}
\end{lemma}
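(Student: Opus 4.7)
The plan is to verify the five parts in order, since (5) combines (3) and (4) formally while (1) and (2) are preliminary positivity statements. For (1) and (2): every element of $\cW_0$ (namely the classes of $x_i$, $x_ix_j$, and $x_ix_j^{-1}$ for $i \neq j$) and every $w_i \in \cW$ is a nontrivial conjugacy class in $F_n$. Since the marking $g\colon R_n \to R$ is a homotopy equivalence, $g$ carries any such $w$ to a nontrivial free homotopy class of loops in $R$, and such a loop must cross at least one edge. Hence $|\rho|_w \geq 1$ whenever $w$ is nontrivial, giving $|\rho|_0 \geq |\cW_0| > 0$ and each $|\rho|_{w_i} \geq 1 > 0$.

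For (3), observe that $|\rho|_w$ is the cyclically reduced word length of $g_*(w)$ in the petal basis of $R$, or equivalently the translation length of $g_*(w)$ acting on the Cayley tree of $F_n$ with respect to that basis. By the classical theorem (Culler--Morgan, Alperin--Bass) that the translation length function of a minimal free action on a simplicial tree determines the action up to equivariant isometry, the full function $w \mapsto |\rho|_w$ determines $g_*$ up to the group of symmetries of the rose $R$ (permutations and inversions of petals). This is precisely the ambiguity already built into ``marked rose'' as a vertex of $\bR_n$, so $\rho$ is determined.

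For (4), if $\mu(\rho) \leq N$ then the first coordinate forces $|\rho|_0 \leq N_0$; since each $|\rho|_w$ is a nonnegative integer, this in turn forces $|\rho|_w \leq N_0$ for every $w \in \cW_0$. The cyclically reduced lengths of $g_*(x_i)$ and of each $g_*(x_i x_j^{\pm 1})$ are therefore at most $N_0$, so each lies in one of finitely many conjugacy classes in $F_n$. I claim this finite data pins $[g_*] \in \Outn$ down to finitely many possibilities: after normalizing a representative in $\Autn$ so that $g_*(x_1)$ is a chosen cyclically reduced word (finitely many choices), the residual inner-automorphism freedom is conjugation by the cyclic centralizer of $g_*(x_1)$, and the bounds on the mixed products $g_*(x_i x_j^{\pm 1})$ restrict this residual freedom via a standard Whitehead-type analysis of cancellation patterns to finitely many representatives.

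Finally, for (5), $\mu$ is injective by (3) and so induces a total order on vertices via the lex order on $\R \times \R^{\cW}$. To see this is a well-order, given a nonempty set $S$ of vertices, pick any $\rho_0 \in S$; by (4) the sublevel set $\{\rho : \mu(\rho) \leq \mu(\rho_0)\}$ is finite, so $S \cap \{\mu \leq \mu(\rho_0)\}$ is a finite, nonempty, totally ordered set containing a minimum, which is the minimum of $S$. The substantive steps are (3) and (4); the main obstacle is the appeal in (3) to the length-spectrum rigidity theorem, together with the bounded free-group combinatorics needed to make the claimed finiteness in (4) precise.
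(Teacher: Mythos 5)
Your overall route is the same one the paper takes, just unpacked: the paper's proof of this lemma is essentially a citation to \cite{V}, noting that the key input is the Culler--Morgan/Alperin--Bass theorem that a free minimal action on a simplicial tree is determined by its translation length function, and that $|\rho|_0$ is the norm from \cite{CV}. Your parts (1), (2), (3) and (5) are correct and match this: $|\rho|_w$ is exactly the translation length of $w$ on the universal cover of $R$ (all edge lengths $1$), so equality of all coordinates of $\mu$ gives an equivariant isometry of trees, hence equality of marked roses; and your derivation of the well-order from injectivity plus finiteness of sublevel sets is the standard one.

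The one place where your argument is not actually an argument is (4). The reduction is right -- lexicographic order bounds $|\rho|_0$, positivity of each summand bounds $|\rho|_w$ for every $w\in\cW_0$, so the conjugacy classes of $g_*(x_i)$ and $g_*(x_ix_j^{\pm1})$ range over finite sets -- but the final step, that these conjugacy-class bounds leave only finitely many possibilities for the marked rose (equivalently for the outer automorphism class up to the finite symmetry group of the rose), is precisely the properness of the norm $|\cdot|_0$, and you dispose of it with ``a standard Whitehead-type analysis of cancellation patterns.'' That is the substantive content of (4): one must show that after normalizing $g_*(x_1)$ to an actual cyclically reduced word, the bounds on the \emph{mixed} classes $x_ix_j^{\pm1}$ force the conjugating elements appearing in $g_*(x_j)$ to be controlled (up to the residual conjugation by powers of the root of $g_*(x_1)$), so that the actual word lengths of a suitable representative automorphism are bounded. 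This cancellation argument is exactly what is proved in \cite{CV}/\cite{V}, and you should either carry it out or cite it explicitly, as the paper does; as written, the crux of (4) is asserted rather than proved, although the claim itself is true and the surrounding structure of your proof is sound.
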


\begin{proof}
A proof may be found in \cite{V}. It relies on the fact that a free action on a simplicial tree is uniquely determined by its translation length function, which was proved by Culler and Morgan \cite[Theorem~3.7]{CM} and independently by Alperin and Bass \cite[Theorem~7.13]{AB}. The proof from \cite{V} applies here because $|\rho|_0$ is the norm used in the original proof in \cite{CV} of the contractibility of Outer space.
\end{proof}

\subsection{Ascending links}

In this section we reduce connectivity of $\bR_n$ at infinity to a local problem. To do this we use the Morse function to arrange all vertices of $\bR_n$ into an ordered list. The  link of each vertex $v$ then has a descending part (spanned by the vertices listed before $v$) and an ascending part (spanned by the vertices listed after $v$). A standard argument shows that $\bR_n$ is $(2n-5)$-connected at infinity provided that the ascending subcomplex of each vertex link is $(2n-5)$-connected.  Here are the details.

{%
\newcommand{\Height}{h}%
\newcommand{\mapcolon}{\colon}%
\newcommand{\NNN}{\mathbb{N}}%
\newcommand{\Level}{i}%
By Lemma~\ref{well}, the map
\begin{align*}
  \Height \mapcolon \bR_n^{(0)} & \longrightarrow \NNN \\
  v & \mapsto \# \{ u \in \bR_n^{(0)} |\, \mu(u) < \mu(v) \} 
\end{align*}
is a well-defined bijection between the $0$-skeleton $\bR_n^{(0)} \subseteq \bR_n$ and the natural numbers $\NNN$; it just counts the number of vertices that come before $v$ in the well-ordering of roses given by $\mu$. Thus we have a list
\(
  \rho_0, \rho_1, \rho_2, \ldots
\)
of all the roses in ascending order of $\mu$-values.

 Let $\bR_{\geq \Level}$ be the subcomplex of $\bR_n$ spanned by the vertices $\rho_{\Level}, \rho_{\Level+1}, \rho_{\Level+2}, \ldots$.  The {\em ascending link}
 of the rose $\rho_{\Level}$  is defined to be
$$  \lk^+(\rho_{\Level}) :=
  \lk(\rho_{\Level}) \cap \bR_{\geq \Level+1}
$$

Recall that a simplicial complex is {\em $k$-spherical}   if it is $k$-dimensional and $(k-1)$-connected.  
In Section~\ref{sec:proof} we will prove 
 \begin{theorem}\label{thm:link} For every rose $\rho$, the ascending link $\lk^+(\rho)$ is $(2n-4)$-spherical.
 \end{theorem}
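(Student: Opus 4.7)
The plan is to describe $\lk(\rho)$ and $\lk^+(\rho)$ combinatorially via blow-ups at the central vertex of $\rho$, bound the dimension of $\lk^+(\rho)$ by $2n-4$, and then prove $(2n-5)$-connectivity by a discrete Morse argument tuned to the ascending condition.

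A simplex of $\bR_n$ containing $\rho$ corresponds to a blow-up $(G,g)$ of the unique vertex $v$ of $\rho$ by a tree, and the other vertices of such a simplex are the roses obtained from $G$ by collapsing maximal trees different from the canonical one that recovers $\rho$. Each adjacent rose $\rho'\ne\rho$ therefore arises from a Whitehead-type change of basis of $F_n$ dictated by the combinatorics of the blow-up. By injectivity of $\mu$ (Lemma~\ref{well}(3)), each such $\rho'$ is either strictly $\mu$-ascending or strictly $\mu$-descending, and $\lk^+(\rho)$ is the full subcomplex of $\lk(\rho)$ spanned by the ascending adjacent roses.

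For the dimension bound, I would show that any ascending simplex of $\lk(\rho)$ has at most $2n-3$ vertices, and so has dimension at most $2n-4$. The vertices of such a simplex are distinct ascending rose quotients of some common blow-up $G$. A trivalent blow-up of the valence-$2n$ vertex $v$ has $2n-3$ internal edges, which already bounds the number of rose quotients by a controlled constant; moreover, Whitehead's peak-reduction lemma applied to the short words in $\cW_0$ forces at least one rose quotient of any blow-up of $\rho$ to be strictly $\mu$-descending. Hence the ascending count is at most $2n-3$.

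For $(2n-5)$-connectivity, the plan is a discrete Morse argument directly on $\lk^+(\rho)$. Using the tail coordinates $|\rho|_{w_1},|\rho|_{w_2},\ldots$ of $\mu$ as a secondary Morse function, I would pair each non-maximal ascending simplex with a canonical $(2n-4)$-dimensional coface, showing that $\lk^+(\rho)$ collapses onto its top-dimensional skeleton, which by the dimension bound is already $(2n-4)$-dimensional and therefore a wedge of $(2n-4)$-spheres by general position. An alternative route is to identify $\lk^+(\rho)$ up to homotopy with a subcomplex of the complex of partial bases of $F_n$ studied by Hatcher--Vogtmann~\cite{HV}, via a simplicial map whose Quillen fibers are contractible thanks to the well-ordering of $\mu$.

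The main obstacle is the connectivity step. The ascending condition is not monotone on the face lattice of $\lk(\rho)$: a face of an ascending simplex can be descending, and conversely, so $\lk^+(\rho)$ is not a standard truncation of $\lk(\rho)$ and the usual nerve-lemma shortcuts do not apply. The plan is to circumvent this by constructing the Morse pairing inductively along the well-order of Lemma~\ref{well}(5), using Whitehead peak-reduction restricted to the finite explicit set $\cW_0$ to produce canonical matches at each inductive stage and thereby guarantee acyclicity of the pairing. The explicitness of $\cW_0$ is precisely what makes the argument concrete and simplifies the more abstract approach of \cite{BF}.
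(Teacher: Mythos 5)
Your proposal contains genuine gaps that would prevent it from constituting a proof, and it does not really engage with the combinatorial structure that makes the result true.

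\textbf{Dimension bound.} Your argument that any ascending simplex has at most $2n-3$ vertices is incorrect. A trivalent blow-up $G$ of the valence-$2n$ vertex of $\rho$ has $2n-3$ internal (blown-up) edges, but the number of roses obtained from $G$ by collapsing maximal trees equals the number of spanning trees of $G$, which is generically far larger than $2n-2$ (for $n=3$ and $G=K_4$ there are already $16$). So ``at least one rose quotient is descending, hence the ascending count is at most $2n-3$'' is a non sequitur: subtracting one from a large number does not yield $2n-3$. The paper handles this by passing from $\lk^+(\rho)$ to a homotopy-equivalent model: Proposition~\ref{prop:ascending} replaces the link by the flag complex $\bZ(\rho)$ whose vertices are \emph{ascending ideal edges}, and there the dimension bound is immediate because a set of pairwise-compatible ideal edges (an ideal tree) has at most $2n-3$ elements. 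That reduction, using the Factorization Lemma and the Poset Lemma, is the first essential step and is entirely absent from your proposal.

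\textbf{Connectivity.} The Morse sketch is not a proof. Collapsing a complex onto its top-dimensional skeleton does not produce a wedge of spheres -- a $k$-skeleton has no reason to be a wedge of $k$-spheres -- and ``by general position'' does not apply here. You correctly identify the obstacle (the ascending condition is not monotone on faces), but your proposed fix, an inductively constructed acyclic matching driven by peak reduction on $\cW_0$, is entirely programmatic: you give no candidate pairing and no reason the pairing would be acyclic. The paper's actual argument is quite different. After the reduction to $\bZ(\rho)$, Theorem~\ref{thm:general} proves sphericality for a family of complexes $\bZ(V)$ parametrized by a decomposition of $E$, by induction on the lexicographically ordered pair $(m,k)$. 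The essential computational input is the Key Lemma, the identity $|A|+|B|=|X|+|Y|+2(Z\ndot W)$ for a partition of $E$ into $X,Y,Z,W$ with $A=X\cup Z$, $B=Y\cup Z$, which yields Lemma~\ref{cor:descending}. The inductive step is a shelling-type argument: one begins with the subcomplex $\bZ_0$ of ideal edges ascending for some $e_i$ with $i>1$, then adjoins the remaining ideal edges first in order of increasing size and then (for the $i=m$ cases) in order of decreasing size, showing at each stage that the link of the new vertex is a join of two spherical complexes of complementary dimensions. None of this machinery -- the $V$-ideal edges, the Key Lemma, the two-pass ordering -- appears in your proposal, and I do not see how Whitehead peak reduction applied only to $\cW_0$ would substitute for it, since the Morse function $\mu$ has infinitely many coordinates precisely because $\cW_0$ alone is insufficient to make $\mu$ injective (Lemma~\ref{well}).
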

 
From this we can deduce Theorem~\ref{thm:Rmain} using the following argument from \cite[Theorem~5.3]{BF}. Since every compact subset of $\bR_n$ is disjoint from $\bR_{\geq \Level}$ for sufficiently large $\Level$, it suffices to show that each $\bR_{\geq \Level}$ is $(2n-5)$-connected. This is done by induction.

The base case $\Level=0$ is the statement that   $\bR_n$ is contractible. 
Assuming that $\bR_{\geq \Level}$ is $(2n-5)$-connected, observe that $\bR_{\geq \Level}$ is obtained from $\bR_{\geq \Level+1}$ by adding the vertex $\rho_{\Level}$ and coning off its  $(2n-5)$-connected ascending link $\lk(\rho_{\Level}) \cap \bR_{\geq \Level+1}$. By the theorems of Hurewicz and van-Kampen, it follows that $\bR_{\geq \Level+1}$ is again $(2n-5)$-connected.}

\section{Blowups and ideal edges}

It remains to prove Theorem~\ref{thm:link}.  For the proof we use the technology introduced in \cite{CV} relating graphs, maximal trees, roses and the norm $\mu$. In this section we give a brief review of this technology.  A more careful discussion can be found in \cite{V}.

Let $G$ be a graph of rank $n$, let  $t_1,\ldots,t_k$ be the edges of a maximal tree $T$ in $G$ and let $e_1,\ldots,e_n$ be the remaining edges of $G$. Choose an orientation for each $e_i$ and let $\einv_i$ denote the same edge with the opposite orientation.  Removing any $t_i$ cuts $T$ into two subtrees (either of which may be a point), and determines a partition $\alpha_i$ of  the set $E=\{e_1,\einv_1,\ldots,e_n,\einv_n\}$ into two pieces according to which subtree contains the terminus of the oriented edge. Partitions   $\alpha_i$ and $\alpha_j$ determined by $t_i$ and $t_j$ are {\em compatible} in the sense that one side of $\alpha_i$ is disjoint from one side of $\alpha_j$.  

Collapsing $T$ produces a rose $R_n$   with oriented petals $E=\{e_1,\overline e_1,\ldots,e_n,\overline e_n\}$.  The graph $G$ can be uniquely reconstructed from the set of partitions $\{\alpha_j\}$ of $E$. 

Now let $\rho=(R,g)$ be a marked rose, and 
let $E=\{e_1,\einv_1,e_2,\einv_2,\ldots,e_n,\einv_n\}$ be the oriented petals of $R$.  A partition of $E$ into two parts $A$ and $A^c=E-A$ {\em splits $e_i$} if $e_i$ and $\einv_i$ are on different sides of the partition. An {\em ideal edge} of $\rho$ is a partition of $E$ into two sets, each with at least two elements, which splits some $e_i$.    A {set of pairwise-compatible} ideal edges is called an {\em ideal tree}; it corresponds to a maximal tree in a graph that has no leaves or bivalent vertices and no separating edges.  

Note that an ideal edge $\alpha=(A,A^c)$ is determined by either of its sides, which we call {\em representatives} for $\alpha$.

\subsection {Ideal edges, star graphs and $\mu$}
 
Let $R$ be a rose and $E$ its set of oriented edges, equipped with the involution $e\mapsto \overline e\,$ sending $e$ to the same edge with opposite orientation. If $\gamma=a_1\ldots a_k$ is a cyclically reduced edge-path  in $R$ the {\em star graph} of $\gamma$ is the graph with vertices $E$ and an edge from $a_i$ to $\overline a_{i+1}$ for each $i=1,\ldots,k$ (setting $k+1=1$).

Now fix a marking $g\colon R_n\to R.$  Each conjugacy class $w$ of $F_n$ can be represented by an edge path in $R_n$,  and  its image $g(w)$ is homotopic to a unique cyclically reduced edge path $\gamma(w)$.  We define  $st(w)$  to be the star graph of $\gamma(w)$,  
and $st(\cW_0)$ to be the superposition of $st(w)$ for all $w\in \cW_0$.   
Thus the sequence $(\cW_0,w_1,w_2,w_3,\ldots)$ gives an infinite sequence of star graphs associated to $\rho=(R,g)$.

For disjoint subsets $X$ and $Y$ of $E$, define $(X\ndot Y)_w$ to be the number of edges of $st(w)$ with one vertex in $X$ and one vertex in $Y$, and $$X\ndot Y=((X\ndot Y)_{\cW_0},(X\ndot Y)_{w_1},(X\ndot Y)_{w_2},\ldots)\in \R\times \R^{\cW}.$$
If $X\subset E,$ define $|X|_w=(X\ndot X^c)_w$ and $|X|=X\ndot X^c$.  In particular, if $X=\{e\}$ this agrees with our previous definition of $|e|_w$.  

Note that the dot product is commutative:
\[
  X \ndot Y = Y \ndot X
\]
and the following ``distributive law" holds for pairwise disjoint subsets $X,Y,Z\subseteq E$:
\[
  ( X  \sqcup Y ) \ndot Z = X \ndot Z + Y \ndot Z
\]
If $\alpha$ is an ideal edge with sides $A$ and $A^c$, define $|\alpha|=|A|=|A^c|$.  

\begin{lemma}[Positivity of the dot product and absolute value]\label{dot-and-norm}
For any non-empty disjoint subsets $X,Y\subseteq E$, we have $X \ndot Y > 0$. In particular:
\begin{enumerate}
\item $|e|>0$ for all $e \in E$.
\item $e\ndot f>0$ for all $e\neq f$ in $E$
\item $|\alpha| > 0$ for all ideal edges $\alpha$.
\end{enumerate}
\end{lemma}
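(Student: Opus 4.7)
My plan is to prove the general statement (that $X \cdot Y > 0$ for arbitrary non-empty disjoint $X, Y \subseteq E$) first, and then derive the three itemized consequences as immediate specializations. Since each coordinate $(X \cdot Y)_w$ is a non-negative integer (it counts edges of a star graph), positivity in the lexicographic order on $\R \times \R^{\cW}$ is equivalent to the existence of a single coordinate that is strictly positive. Because $\cW$ enumerates \emph{all} conjugacy classes of $F_n$, it suffices to exhibit one conjugacy class $w$ such that the star graph $st(w)$ contains an edge with one endpoint in $X$ and one in $Y$.

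To produce such a $w$, I would pick any $e \in X$ and any $f \in Y$; since $X \cap Y = \emptyset$, certainly $e \neq f$. Consider the length-two cyclic edge path $e \bar f$ in $R$. Its two cyclic adjacencies are $(e, \bar f)$ and $(\bar f, e)$, and cyclic reducedness requires $\bar f \neq \bar e$ and $e \neq \overline{\bar f} = f$; both reduce to $e \neq f$, which holds. Because the marking $g\colon R_n \to R$ is a homotopy equivalence, it induces a bijection on conjugacy classes of $\pi_1$, so there is a conjugacy class $w \in F_n$ with $\gamma(w) = e \bar f$. By definition of the star graph, $st(w)$ has exactly two edges: one from $a_1 = e$ to $\overline{a_2} = f$, and one from $a_2 = \bar f$ to $\overline{a_1} = \bar e$. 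The first of these joins $X$ to $Y$, so $(X \cdot Y)_w \geq 1$, which gives $X \cdot Y > 0$.

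The three itemized consequences follow by specialization: for (1) take $X = \{e\}$ and $Y = E \setminus \{e\}$, which is non-empty since $|E| = 2n \geq 4$; for (2) take $X = \{e\}$ and $Y = \{f\}$; for (3) use $|\alpha| = A \cdot A^c$ together with the fact that an ideal edge has both sides non-empty (in fact of size at least two). In each case the general inequality applies directly.

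There is no real obstacle here; the argument is essentially a one-step construction. The only thing to notice is that a two-letter cyclic word is already enough to realize a prescribed star-graph edge between two distinct elements of $E$, and that the marking lets us realize any cyclically reduced word in $R$ as $\gamma(w)$ for an appropriate $w \in \cW$.
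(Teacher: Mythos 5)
Your proof is correct and is essentially the paper's own argument: both produce an explicit short conjugacy class whose cyclically reduced representative is $e\bar f$ (in the paper, $w_ew_f^{-1}$) and observe that its star graph contains an edge joining $e$ to $f$, which forces some coordinate of $X\ndot Y$ to be positive. The only difference is cosmetic: the paper first reduces to singletons via distributivity and treats $f=\overline e$ separately using $w_e$, whereas you handle all cases uniformly by noting that $e\bar f$ is cyclically reduced whenever $e\neq f$.
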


\begin{proof}
By distributivity, we can reduce to singletons $X=\{e\}$ and $Y=\{f\}$. Let $w_e$ be the conjugacy class represented by the loop $e$ in $\rho$. Define $w_f$ analogously.

If $f=\overline e$ then $st(w_e)$ is a single edge from $e$ to $f$, so the $w_e$-coordinate of $e\ndot f$ is equal to $1 \neq 0$.

Otherwise $st(w_ew_f^{-1})$ consists of two edges, one joining $e$ and $f$ and one joining $\einv$ and $\finv$. Therefore the  $w_ew_f^{-1}$ coordinate of $e\ndot f=1\neq0$.
\end{proof}

\begin{lemma}[Consequences of equality]\label{consequences-of-equality}\ 
\begin{enumerate}
\item If $|e|=|f|$ then $f=\overline e$ or $f=e$.
\item If $\alpha, \beta$ are two distinct ideal edges then $|\alpha|\neq |\beta|$.
\item If $\alpha$ is an ideal edge and $e\in E$ then $|\alpha|\neq |e|$.
\end{enumerate}
\end{lemma}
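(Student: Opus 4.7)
The plan is to handle parts (1) and (3) by explicit computations with short cyclic words, and part (2) by showing that the vector $|\alpha|$ reconstructs $\alpha$ uniquely.

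For part (1), let $w_e \in \cW$ be the conjugacy class represented in $R$ by the single loop $e$. The star graph $st(w_e)$ consists of a single edge joining $e$ and $\einv$, so $|e|_{w_e} = 1$ while $|f|_{w_e} = 0$ whenever $f \notin \{e, \einv\}$; this coordinate alone forces $f \in \{e, \einv\}$. For part (3), I first match the one-letter coordinates: $|e|_{[e_i]}$ is $1$ for the unique index $i^{*}$ with $e \in \{e_{i^{*}}, \einv_{i^{*}}\}$ and $0$ otherwise, while $|\alpha|_{[e_i]} = 1$ iff $\alpha$ splits the pair $\{e_i, \einv_i\}$. So $|\alpha| = |e|$ forces $\alpha$ to split only the pair containing $e$ and no other. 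Using $|A|, |A^c| \geq 2$, each side must then contain some unsplit petal: pick $e_j \in A$ and $e_k \in A^c$ with $j, k \neq i^{*}$. Both edges of $st([e_j e_k]) = \{(e_j, \einv_k), (e_k, \einv_j)\}$ straddle $\alpha$, giving $|\alpha|_{[e_j e_k]} = 2$, whereas $|e|_{[e_j e_k]} = 0$ since $e \notin \{e_j, \einv_j, e_k, \einv_k\}$. Contradiction.

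For part (2), the plan is to reconstruct $\alpha = \{A, A^c\}$ from $|\alpha|$. The one-letter coordinates $|\alpha|_{[e_i]}$ identify the set $S$ of split pairs. Next, for two unsplit pairs $i, j \notin S$ the coordinate $|\alpha|_{[e_i e_j]}$ takes value $0$ if both pairs sit on the same side of $\alpha$ and $2$ otherwise, partitioning the unsplit pairs into the two classes ``in $A$'' and ``in $A^c$'' up to the tautological swap. For two split pairs $i, j \in S$, the same two-letter coordinate distinguishes whether $e_i$ and $e_j$ lie on the same side, yielding a coherent orientation class for the split pairs. Finally, to align the orientations of the split pairs with the unsplit partition, I use three-letter coordinates $|\alpha|_{[e_i e_j e_k]}$ with $i \in S$ and $j, k \notin S$ on opposite sides; a direct case check (for instance, comparing the counts $1$ and $3$ obtained by flipping the orientation of pair $i$) breaks the remaining ambiguity. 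Assembling these data recovers $\alpha$ uniquely, so $|\alpha| = |\beta|$ forces $\alpha = \beta$.

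The main obstacle is part (2), particularly the verification in the last step that three-letter coordinates resolve every ambiguity left by the shorter ones. The underlying reason such a step is unavoidable is that $st([e_i e_j])$ is invariant under the involution $e \mapsto \einv$ of $E$, while $st([e_i e_j e_k])$ is not, so odd-length cyclic words are required to distinguish an ideal edge from its image under this involution. A conceptually cleaner alternative, mirroring the proof of Lemma~\ref{well}, is to interpret $|\alpha|_w$ as the translation length of $w$ acting on the Bass-Serre tree of the splitting of $F_n$ obtained by blowing up $R$ along $\alpha$, and then invoke the Culler-Morgan/Alperin-Bass theorem to conclude that this length function determines the $F_n$-tree, hence $\alpha$.
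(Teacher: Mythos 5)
Your treatment of (1) is the paper's proof: the $w_e$-coordinate of $|\cdot|$ already separates $e,\einv$ from everything else. Your treatment of (3) is also correct, and in fact a bit cleaner than the paper's: by comparing all one-letter coordinates first you immediately force $\alpha$ to split exactly the pair containing $e$, collapsing the paper's first two cases into one; your two-letter word $[e_je_k]$ with $j,k\neq i^*$ unsplit and on opposite sides of $\alpha$ is exactly the paper's word $w_{fh}$ in its third case, and the existence of such $j,k$ follows from $|A|,|A^c|\geq 2$ as you say.

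The problem is (2), and it is precisely in the step you yourself flag as unverified. Your reconstruction plan is: one-letter coordinates give the set $S$ of split pairs; two-letter coordinates give the split-pair ``orientation classes'' and the unsplit-pair ``side classes,'' each up to a global swap; a three-letter word $[e_ie_je_k]$ with $i\in S$ and $j,k\notin S$ on opposite sides aligns the two. But such $j,k$ need not exist: it is perfectly possible that every unsplit pair lies on the same side of $\alpha$. For example, with $n=3$, take $A=\{e_1,e_2,e_3,\einv_3\}$ and $A^c=\{\einv_1,\einv_2\}$; here $S=\{1,2\}$, the one unsplit pair $3$ sits entirely in $A$, and both sides still have size $\geq 2$. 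The competing ideal edge $A'=\{\einv_1,\einv_2,e_3,\einv_3\}$ has the same $S$, the same split-pair orientation classes up to swap, and the same unsplit-pair side class, so your first two steps cannot distinguish $\alpha$ from $\alpha'$, and your step-3 word does not exist. The ambiguity is real, not an artifact of your bookkeeping. One can still separate them — e.g. $|\alpha|_{[e_1\einv_2e_3]}=2$ while $|\alpha'|_{[e_1\einv_2e_3]}=0$ — but that word has two split indices and one unsplit, which is outside the family you proposed; filling this hole requires exactly the kind of case-by-case hunting that the paper does. Indeed the paper's proof of (2) does not attempt a reconstruction at all: starting from $|\alpha|=|\beta|$ it fixes representatives $A,B$ with $A\cap B$, $A\setminus B$, $B\setminus A$ all nonempty and produces a distinguishing star graph directly, and it is forced up to length-four words ($st(fez\einv)$, $st(efz\finv)$) in some subcases. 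So you should not take it for granted that length three suffices without a complete check.

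Your ``conceptually cleaner alternative'' via Culler--Morgan/Alperin--Bass is a reasonable idea but is also not a free lunch. The vector $|\alpha|$ is the translation length function of an $F_n$-action on a one-edge simplicial tree (the Bass--Serre tree of the splitting obtained by blowing up $\rho$ along $\alpha$ and collapsing the old petals to length zero). To conclude that this length function determines $\alpha$ you need (a) that the action satisfies the hypotheses of the uniqueness theorem — Culler--Morgan's Theorem~3.7 requires irreducibility, which must be checked for the trees arising here (vertex stabilizers can be large, and for some ideal edges the splitting degenerates), and (b) that distinct ideal edges of $\rho$ give non-equivariantly-isometric trees. Neither is hard, but neither is automatic, and the paper's Lemma~\ref{well} invokes the uniqueness theorem only for free actions, where these issues do not arise. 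As written, then, your argument for (2) has a genuine gap; the route is salvageable either by a fuller case analysis (which ends up close to the paper's) or by carefully justifying the tree-theoretic shortcut.
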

\begin{proof}
Let $w_e$ denote the conjugacy class corresponding to the loop $e$.
\begin{enumerate}
\item  If $f\not\in\{e,\einv\}$, then the $w_e$ coordinates of $|e|$ and $|f|$ are different.

\item If there is an edge $e$ split by only one of $\alpha$ or $\beta$ then the $w_e$-coordinates of $\alpha$ and $\beta$ are different.   If not, choose sides $A$ and $B$ so that $A\cap B, A-(A\cap B)$ and $B-(A\cap B)$ are all non-empty.

If there are $e\in A\cap B$ and $f\in A-(A\cap B)$ which are both split, then $st(ef)$ crosses $\alpha$ but not $\beta$.
If there are $e\in A\cap B$ and $f\in A-(A\cap B)$ neither of which is split, then $st(ef)$ crosses $\beta$ but not $\alpha$.  In either case   the $w_{ef}$ coordinates  of $|\alpha|$ and $|\beta|$ are different.

If  $e\in A\cap B$ is split but $f\in A-(A\cap B)$ is not, then choose $z\in B-(B\cap A)$. If $z$ is split, we are in a previous case by symmetry, so we may assume $\overline z\in B-(A\cap B)$.  Then $st(fez\overline e)$ crosses $\alpha$ but not $\beta$.

It remains to consider the case that $e\in A\cap B$ is not split but $f\in A-(A\cap B)$ is split.  Since  $B^c$ cannot be a singleton, we can find  $z\neq f$ in  $A-(A\cap B)$ or in $(A\cap B)^c$.      In the first case we may assume $\overline z\in B-(A\cap B)$; otherwise we can reduce to a previous case by exchanging the roles of $z$ and $f$. Then  $st(ef\overline z)$ crosses $\alpha$ but not $\beta$. In the second case we may assume $\overline z\in (A\cap B)^c$; otherwise we could again reduce to a previous case by exchanging $f$ and $z$. Then $st(efz\overline f)$ crosses $\alpha$ but not $\beta$.

\item If $\alpha$ doesn't split $e$ then the $w_e$-coordinates of $|e|$ and $|\alpha|$ are different.  If $\alpha$ splits both $e$ and $f$, then the $w_{ef}$-coordinate is different. If $e$ is the only edge split by $\alpha$, then choose $f,\finv$  on one side of $\alpha$ and $h, \overline h$ on the other side.  Then the coordinate of $w_{fh}$ is different.
\end{enumerate}
\end{proof}

\section{Proof of Theorem~\ref{thm:link}}\label{sec:proof}

In this section we first show that the ascending link $\lk^+(\rho)$ is homotopy equivalent to a simplicial complex $\bZ(\rho)$ whose vertices are certain ideal edges of $\rho$, then  we prove that  $\bZ(\rho)$  is $(2n-4)$-spherical in  Theorem~\ref{thm:main}.

\subsection{Tools of the trade}

We recall here a few standard tools which will be useful in our proof.  We start with some elementary observations about  $k$-spherical complexes. For details see, e.g., \cite{Q}.

\begin{lemma} A $k$-spherical complex is either contractible or homotopy equivalent to a nontrivial wedge of $k$-spheres. 
\end{lemma}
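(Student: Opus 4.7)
The plan is to deduce the claim from the Hurewicz and Whitehead theorems via a standard argument. Let $X$ denote the complex. Since $X$ is $(k-1)$-connected and $k$-dimensional, its reduced homology is concentrated in degree $k$: lower degrees vanish by $(k-1)$-connectedness together with Hurewicz, and degrees above $k$ vanish by cellular dimension. Moreover $H_k(X)=\ker\bigl(\partial_k\colon C_k(X)\to C_{k-1}(X)\bigr)$ is a subgroup of the free abelian group $C_k(X)$ on the top-dimensional cells, and hence is itself free abelian.

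Next I would split into cases according to whether $H_k(X)$ vanishes. If $H_k(X)=0$, then all reduced homology of $X$ vanishes; assuming $k\geq 2$ so that $X$ is simply connected, Whitehead's theorem yields $X\simeq\mathrm{pt}$. If $H_k(X)\neq 0$, I would choose a free basis $\{\alpha_i\}_{i\in I}$ and use the Hurewicz isomorphism $\pi_k(X)\cong H_k(X)$ to represent each $\alpha_i$ by a pointed map $f_i\colon S^k\to X$. These assemble into a map $f\colon\bigvee_{i\in I}S^k\to X$ inducing an isomorphism on $H_k$ by construction, and hence on all reduced homology (both sides have only $H_k$ nontrivial). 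Since source and target are simply connected CW complexes, Whitehead's theorem promotes $f$ to a homotopy equivalence, giving $X\simeq\bigvee_{i\in I}S^k$, a nontrivial wedge since $I\neq\emptyset$.

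There is essentially no obstacle; the argument is standard algebraic topology and could simply be cited from \cite{Q}. The only minor bookkeeping concerns the low-dimensional edge cases. When $k=1$, $X$ is a connected graph and the conclusion is the classical statement that a connected graph is contractible (if it is a tree) or homotopy equivalent to a wedge of circles; when $k=0$, $X$ is a discrete nonempty set and is either a single point or a wedge of $0$-spheres. Both edge cases are handled by direct inspection.
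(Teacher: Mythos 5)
Your argument is correct: the paper does not prove this lemma at all, but simply records it as a standard fact with a pointer to \cite{Q}, and the Hurewicz--Whitehead argument you give (homology concentrated in degree $k$ and free, then either contractibility or a homology isomorphism from a wedge of $k$-spheres, with the cases $k\leq 1$ checked directly) is exactly the standard proof that citation stands in for. So your proposal is consistent with the paper and fills in the omitted details correctly.
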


\begin{lemma}
Let $K$ be a $k$-spherical subcomplex of a simplicial complex $L$ and $v$ a vertex of $L$.  If $\lk(v)\cap K$ is $(k-1)$-spherical, then the subcomplex spanned by $K$ and $v$ is $k$-spherical.  
\end{lemma}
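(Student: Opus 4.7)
The natural approach is to recognize that the subcomplex $L'$ spanned by $K$ and $v$ decomposes as
\[
L' \;=\; K \,\cup\, \bigl(v * (\lk(v)\cap K)\bigr),
\]
where the second piece is precisely the star of $v$ in $L'$, and the two pieces intersect in $\lk(v)\cap K$. Since the star is a cone, it is contractible, so the whole problem becomes a gluing computation. First I would check the dimension count: $K$ has dimension $k$ by hypothesis, and the cone $v*(\lk(v)\cap K)$ has dimension $\dim(\lk(v)\cap K)+1=(k-1)+1=k$. Hence $L'$ is $k$-dimensional, as required.

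The real content is showing $(k-1)$-connectivity. Apply Mayer--Vietoris with $A=K$, $B=v*(\lk(v)\cap K)$, and $A\cap B=\lk(v)\cap K$. The hypotheses give $\tilde H_i(A)=0$ for $i\le k-1$, $\tilde H_i(B)=0$ for all $i$, and $\tilde H_i(A\cap B)=0$ for $i\le k-2$. The reduced Mayer--Vietoris sequence
\[
\tilde H_i(A\cap B)\to \tilde H_i(A)\oplus \tilde H_i(B)\to \tilde H_i(L')\to \tilde H_{i-1}(A\cap B)
\]
then forces $\tilde H_i(L')=0$ for $i\le k-1$. For simple connectivity (when $k\ge 2$) I would invoke Van Kampen on the same decomposition: $\pi_1(B)$ is trivial, $\pi_1(A)$ is trivial since $K$ is $(k-1)$-connected with $k-1\ge 1$, and $A\cap B$ is path-connected since it is $(k-2)$-connected with $k-2\ge 0$, giving $\pi_1(L')=1$. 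Combined with the Hurewicz theorem and the vanishing of $\tilde H_i$ through degree $k-1$, this yields $(k-1)$-connectivity of $L'$.

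The only thing requiring mild care is the edge cases $k=0$ and $k=1$. For $k=0$, being $0$-spherical means nonempty and $0$-dimensional and $(-1)$-spherical means nonempty, so $L'$ is a nonempty $0$-dimensional complex --- automatically $0$-spherical. For $k=1$, $\lk(v)\cap K$ is $0$-spherical (nonempty), so attaching the cone joins $v$ to at least one component of $K$, and the Mayer--Vietoris argument at $i=0$ shows $L'$ is connected; dimension is still $1$. In all cases the argument is standard (cf.\ Quillen's treatment in \cite{Q}), and I expect no serious obstacle --- the only subtlety is keeping the conventions about $(-1)$-connected/spherical straight in the low-dimensional base cases.
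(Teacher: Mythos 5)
Your overall approach is correct and is the standard one: decompose the span $L'$ as $K \cup \bigl(v * (\lk(v)\cap K)\bigr)$, observe the cone piece is contractible, run reduced Mayer--Vietoris and van Kampen on the pushout, and finish with Hurewicz. Note that the paper itself does not prove this lemma --- it simply records it among ``standard tools'' and refers to Quillen \cite{Q} for details --- so there is no paper proof to compare against; yours is exactly the argument one would expect to find there.

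One small but genuine slip is in your $k=0$ edge case. With the paper's definition, a complex is $m$-spherical when it is \emph{$m$-dimensional} and $(m-1)$-connected; an $m$-dimensional complex for $m=-1$ is the \emph{empty} complex. So $(-1)$-spherical means \emph{empty}, not nonempty --- you have conflated $(-1)$-spherical with $(-1)$-connected. This actually matters for the internal consistency of what you wrote: if $\lk(v)\cap K$ were nonempty, then $v * (\lk(v)\cap K)$ would contain an edge and $L'$ would be $1$-dimensional, contradicting your conclusion that $L'$ is $0$-dimensional. With the correct reading ($\lk(v)\cap K = \emptyset$), one has $v * \emptyset = \{v\}$, so $L' = K \sqcup \{v\}$, a nonempty $0$-dimensional complex, hence $0$-spherical as needed. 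Your $k=1$ case is fine. One further point worth making explicit: the identity $L' = K \cup \bigl(v * (\lk(v)\cap K)\bigr)$ requires $K$ to be a full subcomplex of $L$ (otherwise there could be simplices of $L'$ with all vertices in $K$ that do not lie in $K$); this holds in all the flag-complex situations where the lemma is applied in the paper, but it is the sort of hypothesis one should note.
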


\begin{lemma}
If $K$ is $k$-spherical and $L$ is $\ell$-spherical, then the simplicial join $K\ast L$ is $(k+\ell+1)$-spherical.
\end{lemma}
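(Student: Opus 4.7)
The plan is to verify the two conditions in the definition of $(k+\ell+1)$-spherical separately: the dimension is exactly $k+\ell+1$, and the complex is $(k+\ell)$-connected. The dimension statement is immediate from the definition of the simplicial join, since every simplex of $K\ast L$ is of the form $\sigma\ast\tau$ for simplices $\sigma\in K$ and $\tau\in L$, with $\dim(\sigma\ast\tau)=\dim\sigma+\dim\tau+1$. Thus $\dim(K\ast L)=k+\ell+1$.

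For the connectivity, I would appeal to the classical fact that the join of a $p$-connected space and a $q$-connected space is $(p+q+2)$-connected. With $p=k-1$ and $q=\ell-1$ this yields exactly $(k+\ell)$-connectedness, as required. The cleanest justification uses the homotopy equivalence $K\ast L\simeq \Sigma(K\wedge L)$: because $K$ is $(k-1)$-connected and $L$ is $(\ell-1)$-connected the smash product $K\wedge L$ is $(k+\ell-1)$-connected, and suspension raises connectivity by one, giving $(k+\ell)$-connectedness of $K\ast L$.

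An equivalent, more hands-on route is a Mayer--Vietoris and van Kampen argument applied to the decomposition $K\ast L=(K\times CL)\cup(CK\times L)$, where each piece is contractible (the cones deformation-retract onto their apex) and the intersection is $K\times L$. The Künneth formula for $H_*(K\times L)$ together with the long exact sequence then pushes the only possibly non-vanishing reduced homology of $K\ast L$ up to degree $k+\ell+1$; van Kampen handles $\pi_1$ in the relevant range, and Hurewicz converts the homological information into the needed homotopy statement.

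There is no real obstacle here: the lemma is a textbook fact about simplicial joins, and either of the two approaches above suffices. The only minor care needed is with degenerate low-dimensional cases (for instance if $k=0$ or $\ell=0$, so that $(k-1)$-connected simply means nonempty), but these are covered uniformly by the same arguments once one adopts the convention that a nonempty space is $(-1)$-connected.
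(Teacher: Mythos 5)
The paper gives no proof of this lemma; it is listed among elementary observations about $k$-spherical complexes with a pointer to Quillen's paper, so there is no in-paper argument to compare against. Your first route is correct and is the standard one: for nonempty $K$ and $L$ the join satisfies $K\ast L\simeq\Sigma(K\wedge L)$, the smash of a $(k-1)$-connected complex with an $(\ell-1)$-connected one is $(k+\ell-1)$-connected, and suspension raises connectivity by one; together with the immediate dimension count $\dim(K\ast L)=k+\ell+1$ this gives the result. The degenerate cases (small $k$ or $\ell$, or one factor contractible) are handled as you say.

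Your second route, however, contains a genuine error: $K\times CL$ is \emph{not} contractible. Collapsing the cone factor to its apex is a deformation retraction of $K\times CL$ onto $K\times\{\ast\}=K$, so $K\times CL\simeq K$, and likewise $CK\times L\simeq L$. This matters: if the two pieces really were contractible, Mayer--Vietoris would give $\tilde H_n(K\ast L)\cong\tilde H_{n-1}(K\times L)$, and since $\tilde H_{n-1}(K\times L)$ contains $H_{n-1}(K)\otimes H_0(L)\cong\tilde H_{n-1}(K)$ as a K\"unneth summand, taking $n-1=k$ would wrongly predict $\tilde H_{k+1}(K\ast L)\neq 0$ whenever $\tilde H_k(K)\neq 0$, contradicting $(k+\ell)$-connectedness as soon as $\ell\geq 1$. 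The fix is to use the correct Mayer--Vietoris sequence, with $H_n(K)\oplus H_n(L)$ in the middle term, and note that the map $H_n(K\times L)\to H_n(K)\oplus H_n(L)$ is split surjective (split by the coordinate inclusions); this yields the join--K\"unneth isomorphism $\tilde H_n(K\ast L)\cong\bigoplus_{p+q=n-1}\tilde H_p(K)\otimes\tilde H_q(L)$ up to Tor terms, from which the required vanishing follows. Since your first route is self-contained and correct, the proposal as a whole stands; only the hands-on alternative needs repair.
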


We will use a simple version of Quillen's Poset Lemma. A map $f\colon X\to Y$ between posets is a {\em poset map} if $x\leq x'$ implies $f(x)\leq f(x')$ for all $x,x'\in X$. The {\em geometric realization} of a poset is the simplicial complex with one vertex for each element of the poset and a $k$-simplex for each totally ordered chain of $k+1$ elements.    A poset map induces a simplicial map of geometric realizations, and when we use topological terms to describe posets and poset maps we are referring to the geometric realizations.  

\begin{lemma}\label{poset} [Poset lemma \cite{Q}]  Let $f\colon X\to X$ be a poset map, with $f(x)\leq x$ for each $x\in X$ (or $f(x)\geq x$ for each $x\in X$).  Then the image of $X$ is a deformation retract of $X$.  
\end{lemma}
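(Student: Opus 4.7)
The plan is to exploit the fact that the hypothesis $f(x) \leq x$ is exactly the data of a natural transformation from the functor $f$ to the identity when we regard the poset $X$ as a small category; such a natural transformation then induces a homotopy between $|f|$ and $\mathrm{id}_{|X|}$ on geometric realizations.

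First I would set up the homotopy explicitly. Let $[1]$ denote the two-element poset $\{0 < 1\}$ and consider the product poset $X \times [1]$ with its coordinatewise order. Define a poset map
\[
H\colon X \times [1] \longrightarrow X, \qquad H(x, 0) = f(x), \quad H(x, 1) = x.
\]
The only nontrivial case to verify is a relation $(x, 0) \leq (x', 1)$, which reduces to $x \leq x'$; then $H(x, 0) = f(x) \leq f(x') \leq x' = H(x', 1)$, using first that $f$ is a poset map and then the standing hypothesis $f(x') \leq x'$.

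Next I would pass to geometric realizations. Via the standard prism triangulation, $|X \times [1]|$ is canonically homeomorphic to $|X| \times [0,1]$, so $|H|$ becomes a continuous map $|X| \times [0,1] \to |X|$ that restricts at the two ends to $|f|$ and $\mathrm{id}_{|X|}$. Since $|f|$ lands in the subcomplex $|Y|$, where $Y := f(X)$, this is a homotopy through self-maps of $|X|$ terminating in a map with image in $|Y|$. Because the inclusion $|Y| \hookrightarrow |X|$ is a cofibration of CW complexes, the homotopy extension property upgrades this to an honest strong deformation retraction of $|X|$ onto $|Y|$. The alternative hypothesis $f(x) \geq x$ is handled by reversing the order throughout.

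The only real obstacle I anticipate is the bookkeeping identifying $|X \times [1]|$ with $|X| \times [0,1]$ under the prism triangulation and checking that $|H|$ has the expected boundary values; this could equally well be sidestepped by citing the general categorical principle that natural transformations induce homotopies between nerves.
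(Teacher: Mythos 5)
Your construction of the homotopy $|H|$ from $\mathrm{id}_{|X|}$ to $|f|$ via the map $H\colon X\times[1]\to X$ is correct and is the standard argument (the paper itself gives no proof, merely citing Quillen). However, the final step does not go through as written. Knowing that the identity of $|X|$ is homotopic to a map with image in the subcomplex $|Y|$, where $Y = f(X)$, is \emph{not} by itself enough to conclude that $|Y|$ is a deformation retract of $|X|$, cofibration or no cofibration: the identity of the disk $D^2$ is homotopic to a constant map whose image lies in $S^1\subset D^2$, yet $S^1$ is not even a retract of $D^2$. The homotopy extension property cannot rescue this without further input, because what is really required is that the inclusion $|Y|\hookrightarrow|X|$ be a homotopy equivalence.

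The gap is easy to fill using the same idea one more time. Since $f(Y)\subseteq f(X)=Y$, the map $f$ restricts to a poset self-map $f|_Y\colon Y\to Y$ which again satisfies $f(y)\leq y$ for $y\in Y$. Running your natural-transformation homotopy for $f|_Y$ shows $|f|\big|_{|Y|}\simeq\mathrm{id}_{|Y|}$. Combining this with your original homotopy, the map $|f|\colon|X|\to|Y|$ and the inclusion $|Y|\hookrightarrow|X|$ are mutually inverse homotopy equivalences. Now the fact you wanted to invoke applies: an inclusion of CW pairs that is both a cofibration and a homotopy equivalence admits a strong deformation retraction (this is a standard consequence of the homotopy extension property, e.g.\ \cite[Cor.~0.20]{Hatcher}), so $|Y|$ is a deformation retract of $|X|$. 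The reversed-order case is handled symmetrically, as you note.
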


\subsection{Complexes of ideal edges,  ideal trees and the ascending link}
Let $\cI=\cI(\rho)$ be the set of ideal edges of $\rho$.  Recall that a {\em flag complex} is a simplicial complex that is determined by its 1-skeleton: $(k+1)$ vertices span a $k$-simplex if and only if every pair spans an edge.  Let $\bI=\bI(\rho)$ be the flag complex whose vertices are the elements of $\cI$, and whose 1-simplices are pairs $\{\alpha, \beta\}$ of compatible ideal edges.  

Recall that an {\em ideal tree} is a set of pairwise-compatible ideal edges.  Let  $\cT=\cT(\rho)$ be the collection of all ideal trees in $\rho$ and $\bT=\bT(\rho)$ the flag complex whose vertices are the elements of $\cT$, with an edge from $\cA$ to $\cB$ if $\cA\cup \cB\in \cT$.  Then  $\bT$ contains $\bI$ as a subcomplex.

An ideal edge $\alpha$ is {\em ascending for $e_i$} if $\alpha$ splits $e_i$ and $|\alpha|>|e_i|$.
It is {\em ascending} if it is ascending for some $e_i$.  A rose $\rho'$ in the link of $\rho$ in $\bR_n$ is {\em ascending} if $\mu(\rho')>\mu(\rho)$, where $\mu$ is the Morse function $\mu$ defined in section~\ref{Morse}. In this section we describe the homotopy type of the subcomplex $\lk^+(\rho)$ spanned by these ascending roses in terms of ideal edges.
 
For a fixed marked rose $\rho=(R,g)$ let $\cZ(\rho)$ be the set of ascending ideal edges of $\rho$ and $\bZ(\rho)$ the flag complex with vertices $\cZ(\rho)$ and edges compatible pairs $\{\alpha,\beta\}$.    In other words, $\bZ(\rho)$ is the full subcomplex of $\bI(\rho)$ spanned by ascending edges.  

\begin{proposition}\label{prop:ascending} The ascending link $\lk^+(\rho)$ is homotopy equivalent to $\bZ(\rho)$.  
\end{proposition}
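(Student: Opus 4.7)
The plan is to set up an explicit homotopy equivalence between $\lk^+(\rho)$ and $\bZ(\rho)$ using the Poset Lemma (Lemma~\ref{poset}), by analysing the simplicial structure of $\lk(\rho)$ through blowups and ideal trees.

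First I would establish the dictionary between simplices of $\lk(\rho)$ and blowup data. A simplex of $\bR_n$ containing the rose $\rho$ as a vertex is determined by a blowup $(G,g)$ of $\rho$ together with a choice of maximal trees $T_0,T_1,\dots,T_k$ in $G$, where $T_0$ is the canonical tree collapsing $G$ to $\rho$. The tree edges of $T_0$ correspond bijectively to an ideal tree $\cA(G)=\{\alpha_1,\dots,\alpha_m\}\subset\cI(\rho)$, and any other maximal tree $T_i$ of $G$ differs from $T_0$ by a sequence of elementary swaps, each exchanging some $t_{\alpha_j}$ for a petal $e_\ell$ split by $\alpha_j$.

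Next I would characterise the ascending simplices. Exploiting the additivity of the dot product, I would prove that a rose $\rho_i=G/T_i$ is ascending relative to $\rho$ if and only if every individual swap turning $T_0$ into $T_i$ is ascending, i.e.\ each swapped-in petal $e_\ell$ satisfies $|\alpha_j|>|e_\ell|$ for the $\alpha_j$ being swapped out. Consequently, a simplex of $\lk(\rho)$ with data $(G,\{T_1,\dots,T_k\})$ lies in $\lk^+(\rho)$ exactly when every $\alpha_j$ used in any swap is ascending and each swap uses a petal for which $\alpha_j$ is ascending.

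Finally I would invoke the Poset Lemma. Inside the simplex poset of $\lk^+(\rho)$, I would pick out a subcomplex canonically isomorphic to $\bZ(\rho)$: to each simplex $\{\alpha_1,\dots,\alpha_r\}$ of $\bZ(\rho)$ associate the blowup along the ideal tree $\{\alpha_1,\dots,\alpha_r\}$ together with $r$ distinct single-swap trees $T_1,\dots,T_r$, one per $\alpha_j$, each swapping for the petal $e_\ell$ minimising $|e_\ell|$ among the choices for which $\alpha_j$ is ascending. I would then define a face-monotone poset map on the simplex poset of $\lk^+(\rho)$ collapsing each simplex onto its canonical sub-simplex by discarding redundant tree choices; Lemma~\ref{poset} then yields the desired deformation retraction.

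The principal obstacle will be the second step, ruling out ``accidental'' ascents: swap sequences whose individual swaps are not all ascending but whose cumulative effect on $\mu$ happens to be positive. Excluding this requires exploiting the lexicographic refinement of $\mu$ by the coordinates $|\rho|_{w_i}$ ranging over all conjugacy classes $w\in\cW$, together with the positivity and rigidity statements of Lemmas~\ref{dot-and-norm} and~\ref{consequences-of-equality}. This is precisely the point at which the full list $\cW$ of conjugacy classes --- not merely the finite set $\cW_0$ --- is essential: one must find, for each non-ascending swap, a coordinate $w$ detecting it that cannot be compensated by any simultaneous swaps.
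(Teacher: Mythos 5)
Your plan hinges on the characterization in your second step: that $\rho'=G/T_i$ is ascending if and only if \emph{each} elementary swap is ascending, equivalently that the ideal tree of an ascending rose consists entirely of ascending ideal edges. This is exactly the point where the argument breaks, and the ``accidental ascents'' you hope to rule out cannot be ruled out. The Morse function $\mu$ is lexicographic with the aggregate norm $|\rho|_0=\sum_{w\in\cW_0}|\rho|_w$ in the leading position, and $|\rho|_0$ is a single real number in which contributions from different swaps simply add and compensate: a large gain $|\alpha_2|_0-|e_2|_0$ can outweigh a loss $|\alpha_1|_0-|e_1|_0$, making $\rho'$ ascending even though the swap at $\alpha_1$ is descending. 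Once the leading coordinate has strictly increased, no later coordinate $|\cdot|_{w_i}$ can ``detect'' the bad swap, because lexicographically it is irrelevant; so the refinement by the full list $\cW$ (whose actual role is only to make $\mu$ injective and a well-ordering, Lemma~\ref{well}) cannot supply the missing rigidity. Since ascending roses whose ideal trees contain descending ideal edges do occur, the subcomplex of $\lk^+(\rho)$ you want to retract onto is not parametrized by $\bZ(\rho)$ in the way your step 3 assumes, and the proposed face-monotone collapse is not defined on all of $\lk^+(\rho)$ (there are also secondary compatibility problems: your ``canonical sub-simplex'' chosen by minimizing $|e_\ell|$ need not behave functorially under passing to faces, as the Poset Lemma requires).

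The paper's proof is built precisely around the failure of your characterization and needs only a much weaker input. It first maps $\lk^+(\rho)$ to the complex $\bT(\rho)$ of ideal trees by $\rho^{\cA}_T\mapsto\cA$ (well-defined by Lemma~\ref{consequences-of-equality}); preimages of simplices are simplices, so this is a homotopy equivalence onto its image. The Factorization Lemma is then used only to guarantee that every ideal tree in the image contains \emph{at least one} ascending edge, so that $\cA\mapsto\cA^+$ (discard the descending edges) is a well-defined poset map, and the Poset Lemma retracts the image onto trees consisting of ascending edges. Finally a barycentric (straight-line) retraction identifies that complex with $\bZ(\rho)$. If you want to salvage your approach, you should abandon the ``iff'' and import these two moves: a weak one-ascending-edge statement via the Factorization Lemma, followed by the Poset Lemma applied to the discard-descending-edges map, rather than attempting to characterize ascending roses swap-by-swap.
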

\begin{proof}
Every rose in $\lk^+(\rho)$ is obtained from $\rho$ by blowing up some ideal tree $\cA$ and then collapsing a maximal tree $T$ to obtain a new rose, denoted $\rho^\cA_T$. We may assume that $T$ contains none of the blown-up edges, i.e. that $T$ is a subset of the edges $E$   of $\rho$.  Then Lemma~\ref{consequences-of-equality} implies that the sets $\cA\in \cT$ and $T\subset E$ are uniquely determined by $\rho'$, and in particular the map $f\colon \lk^+(\rho)\to \bT(\rho)$ sending $\rho'=\rho^\cA_T$ to $\cA$ is well-defined.  Since $\rho^\cA_T$ and $\rho^\cB_S$ have a common blowup if and only if $\cA\cup\cB$ is an ideal tree,   $f$ is a simplicial map. The inverse image of the simplex spanned by $\cA_0,\ldots,\cA_k$ in $\bT$ consists of roses with the common blowup $\rho^\cA$, where   $\cA= \cA_0 \cup\ldots\cup \cA_k$. Since such roses span a  simplex in $\lk^+(\rho)$, $f$ is a homotopy equivalence onto its image $\im(f)\subseteq \bT$ (see, e.g. \cite{HV}, Cor 2.7).   

Let $\bT^+(\rho)$ denote the subcomplex of $\bT(\rho)$ spanned by forests consisting entirely of ascending edges. 
  If $\cA=\{\alpha_0,\ldots,\alpha_k\}$ is any ideal tree in $\im(f)$, then by the Factorization Lemma (see e.g. \cite{V}, Proposition~7.1), at least one of the $\alpha_i$ must be ascending.  (The Factorization Lemma says that one can match the $\alpha_i$ to edges $e_i\in T$ in such a way that $\alpha_i$ splits $e_i$ for all $i$.)   Inclusion makes $\cT(\rho)$ into a poset, and the map sending $\cA\in \im(f)$ to the subforest $\cA^+$ of ascending edges in $\cA$ is a poset map satisfying the hypotheses of the Poset Lemma. So it is a homotopy equivalence onto its image $\im(f)^+\subset \bT^+(\rho)$.     
  
  Every ideal tree consisting of a single ascending edge is in $\im(f)^+$,  so $\im(f)^+$ contains  $\bZ(\rho)$.  In fact $\bZ(\rho)$ is a deformation retract of $\im(f)^+$.  To see this, define a map $\phi\colon \im(f)^+ \to \bZ(\rho)$ by sending a vertex $\cA$ to the barycenter of the simplex of $\bZ(\rho)$ spanned by the elements of $\cA$, and extending linearly over simplices of $\im(f)^+$.   The deformation retraction is given by sending $x$ to $(1-t)x+t\phi(x)$.  
\end{proof}

\begin{figure}
\begin{center}
\begin{tikzpicture}[scale=1.5] 
\fill [black] (0,0) circle (.05); 
\node [above] (e) at (0,0) {$X$};
\fill [black] (0,1) circle (.05);
\node [above] (f) at (0,1) {$Y$};
\fill [black] (-30:1) circle (.05);
\node [above] (b) at (-30:1) {$W$};
\fill [black] (-150:1) circle (.05);
\node [above] (a) at (-150:1) {$Z$};
\begin{scope} [xshift=.4cm, yshift=-.25cm]
\draw [red,thick, rotate=60] (.1,.0) ellipse (.4cm and .8cm);
\end{scope}
\node   (beta) at (-30:1.5) {$\beta$};
\begin{scope} [xshift=-.4cm, yshift=-.25cm]
\draw [blue, thick, rotate=-60] (-.1,0) ellipse (.4cm and .8cm);
\end{scope}
\node  (alpha) at (-150:1.5) {$\alpha$};
 \end{tikzpicture}
\end{center}
\caption{Key Lemma}
\label{fig:key}
\end{figure}
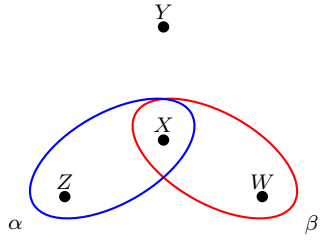

\begin{figure}
\begin{center}
\begin{tikzpicture}[scale=1.75]
  \begin{scope}[xshift=.4cm, yshift=-.25cm]
 \draw (0,0) to (0,1);
    \fill[white] (0,.5) circle (.1);
 \draw (0,0) to (-30:1);
 \draw (0,0) to (-150:1);
        \draw ([shift=(180:1.73cm)]-30:1) arc (180:120:1.73cm);
        \fill[white] (170:.75) circle (.1);
      \draw ([shift=(0:1.73cm)]-150:1) arc (0:60:1.73cm);
      \draw ([shift=(240:1.73cm)]90:1) arc (240:300:1.73cm);
         \fill[white] (-90:.73) circle (.25);
        \fill[white] (-30:.55) circle (.175);
 \fill [black] (-30:1) circle (.05);
  \node [right] (w) at (-30:1) {$W$};
\fill [black] (0,0) circle (.05); 
    \node [below](x) at (0,0) {$X$};
\fill [black] (0,1) circle (.05);    
    \node [above] (y) at (0,1) {$Y$};
\fill [black] (-150:1) circle (.05);
    \node [left] (z) at (-150:1) {$Z$};
    \node [blue,   scale=.75] (a) at (0,.5)    {$X \ndot Y$};
    \node [blue, scale=.75] (c) at (-30:.55)  {$X \ndot W$};
    \node [blue,   scale=.75] (d) at (170:.75)   {$Y \ndot Z$};
    \node [blue,   scale=.75] (f) at (-90:.75)    {$Z \ndot W$};
    \node [blue, below] (g) at (0,-1) {$\text{Contributions to\ }|A|$};
  \end{scope}
  \begin{scope}[xshift=4cm, yshift=-.25cm]
  \draw (0,0) to (0,1);  
    \fill[white] (0,.5) circle (.1);
 \draw (0,0) to (-30:1);  
 \draw (0,0) to (-150:1);  
          \fill[white] (-150:.55) circle (.175);
\draw ([shift=(180:1.73cm)]-30:1) arc (180:120:1.73cm); 
\draw ([shift=(0:1.73cm)]-150:1) arc (0:60:1.73cm); 
         \fill[white] (10:.75) circle (.1);
\draw ([shift=(240:1.73cm)]90:1) arc (240:300:1.73cm);  
         \fill[white] (-90:.73) circle (.25);
 \fill [black] (-30:1) circle (.05);
  \node [right] (w) at (-30:1) {$W$};
\fill [black] (0,0) circle (.05); 
    \node [below](x) at (0,0) {$X$};
\fill [black] (0,1) circle (.05);    
    \node [above] (y) at (0,1) {$Y$};
\fill [black] (-150:1) circle (.05);
    \node [left] (z) at (-150:1) {$Z$};
\node [red,  scale=.75] (a) at (0,.5)    {$Y \ndot X$};
\node [red,  scale=.75] (b) at (-150:.55) {$X \ndot Z$};
    \node [red,  scale=.75] (e) at (10:.75)  {$Y \ndot W$};
    \node [red, scale=.75] (f) at (-90:.73)  {$Z \ndot W$};
    \node [red, below] (g) at (0,-1) {$\text{Contributions to\ }|B|$};
  \end{scope}
\end{tikzpicture}
\end{center}
\caption{Proof of Key Lemma}
\label{fig:keyproof}
\end{figure}
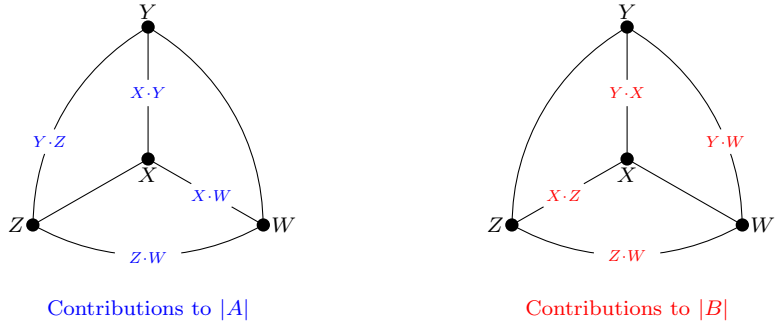

To understand $\bZ(\rho)$, we need to know when ideal edges are ascending. We will do this using the dot product and norm. The following lemma is a  slight variation of \cite[Lemma~11.2]{V}.
\begin{lemma}[Key Lemma] Suppose $E$ is partitioned into four sets $X,Y,Z$ and $W$. Let $A=X\cup Z$ and $B=Y\cup Z$. Then
\[
  |A|+|B| = |X| + |Y| + 2(Z \ndot W)
\]
In particular, if $Z\ndot W>0$ and $|X|,|Y|\geq \lambda \in \R\times\R^{\mathcal W}$, then
$|A|>\lambda$ or $|B|>\lambda$.
\end{lemma}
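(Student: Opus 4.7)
The plan is to prove the identity purely by bilinearity of the dot product, then derive the ``in particular'' clause by a contrapositive argument in the ordered abelian group $\R \times \R^{\cW}$.

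First, I would expand $|A|$ and $|B|$ using the distributive law stated earlier in the excerpt. Since $A = X \cup Z$ and $A^c = Y \cup W$, applying the distributive law twice gives
\[
  |A| = (X \sqcup Z) \ndot (Y \sqcup W) = X\ndot Y + X\ndot W + Z\ndot Y + Z\ndot W,
\]
and similarly $|B| = Y\ndot X + Y\ndot W + Z\ndot X + Z\ndot W$. Summing and using commutativity of $\ndot$ collects a $2(X \ndot Y)$ term and a $2(Z\ndot W)$ term, with single copies of $X\ndot Z$, $X\ndot W$, $Y\ndot Z$, $Y\ndot W$.

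Next I would expand $|X|$ and $|Y|$ the same way, using $X^c = Y \sqcup Z \sqcup W$ and $Y^c = X\sqcup Z \sqcup W$: this yields
\[
  |X| + |Y| = 2(X\ndot Y) + X\ndot Z + X\ndot W + Y\ndot Z + Y\ndot W.
\]
Comparing the two expressions, every term matches except for the extra $2(Z\ndot W)$ on the $|A| + |B|$ side, giving the identity $|A|+|B| = |X|+|Y|+2(Z\ndot W)$.

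For the ``in particular'' statement, I would argue by contradiction: if both $|A| \leq \lambda$ and $|B| \leq \lambda$, then $|A|+|B| \leq 2\lambda$ in the ordered group, whereas the identity combined with $|X|,|Y| \geq \lambda$ and $Z\ndot W > 0$ gives $|A|+|B| \geq 2\lambda + 2(Z\ndot W) > 2\lambda$, a contradiction. Hence at least one of $|A|, |B|$ must strictly exceed $\lambda$.

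There is no substantive obstacle here; the only point to verify carefully is that the distributive law stated for three pairwise disjoint subsets extends to the two-variable bilinear expansion used above, which is an immediate iteration. All other manipulations take place in the ordered abelian group $\R \times \R^{\cW}$, where $\leq$ and $+$ behave in the usual way.
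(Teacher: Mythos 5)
Your proof is correct and follows essentially the same computation as the paper's: both expand the norms bilinearly via distributivity and compare terms to obtain the identity, and the ``in particular'' clause follows from the same elementary observation about the ordered group.
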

\begin{proof}
  This is a straightforward computation:
  \begin{align*}
    |A|+|B| &=
    ( X \ndot Y + Y \ndot Z + X \ndot W + Z \ndot W ) +
    ( Y \ndot X + X \ndot Z + Y \ndot W + Z \ndot W )
    \\
    &=
    X \ndot (Y\cup Z \cup W) + Y \ndot (X \cup Z \cup W) + 2 (Z \ndot W)
    \\
    &=
    |X|+|Y| + 2(Z \ndot W)
  \end{align*}
  Figure~\ref{fig:keyproof} may be helpful for following this computation.
\end{proof}

\begin{theorem}\label{thm:main} For $n\geq 2$ and any rose $\rho=(R,g)$ the complex $\bZ(\rho)$ is $(2n-4)$-spherical. 
\end{theorem}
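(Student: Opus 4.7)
I would prove Theorem~\ref{thm:main} in two parts: the dimension bound $\dim \bZ(\rho) \leq 2n-4$ and the connectivity statement that $\bZ(\rho)$ is $(2n-5)$-connected.

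For the dimension bound, note that a simplex of $\bZ(\rho)$ is an ideal tree of pairwise compatible ascending ideal edges, which by the blowup construction of Section~\ref{jewels} records a maximal tree in a connected rank-$n$ blowup of $\rho$. Since each ideal edge has both sides of size at least two, every vertex of this blowup has valence $\geq 3$. Combining the Euler identity $E = V + n - 1$ with the trivalence inequality $2E \geq 3V$ yields $V \leq 2n-2$, so the maximal tree has at most $2n-3$ edges and $\dim \bZ(\rho) \leq 2n-4$.

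For the connectivity statement I would argue by induction on $n \geq 2$. In the base case $n=2$, $(2n-5)$-connectedness reduces to nonemptiness of $\bZ(\rho)$. The only candidate ideal edges are $\alpha_1=\{e_1,e_2\}\,|\,\{\einv_1,\einv_2\}$ and $\alpha_2=\{e_1,\einv_2\}\,|\,\{\einv_1,e_2\}$. The Key Lemma applied with $X=\{e_1\}$, $Y=\{\einv_1\}$, $Z=\{e_2\}$, $W=\{\einv_2\}$ gives $|\alpha_1|+|\alpha_2| = 2|e_1| + 2(e_2\ndot \einv_2) > 2|e_1|$, using positivity of the dot product (Lemma~\ref{dot-and-norm}). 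Hence at least one $\alpha_i$ is ascending for $e_1$.

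For the inductive step, fix a rose $\rho$ of rank $n\geq 3$ and pick an oriented edge $f\in E$ minimizing $|f|$ in the well-order. Let $\bZ^f(\rho) \subseteq \bZ(\rho)$ be the full subcomplex spanned by ascending ideal edges that split $f$. The plan is twofold: (i) show $\bZ^f(\rho)$ is a deformation retract of $\bZ(\rho)$ via a poset map $\Phi$ replacing each ascending edge $\alpha$ that does not split $f$ by a compatible ascending edge $\Phi(\alpha)$ that does split $f$; and (ii) analyze $\bZ^f(\rho)$ via its two-sided structure — each ideal edge splitting $f$ is determined by a pair of representatives in $E\setminus\{f,\einv\}$, one paired with $f$ and one with $\einv$ — to relate it to ideal-edge complexes of smaller rank for which the inductive hypothesis applies.

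The construction of $\Phi$ is where the Key Lemma is used: decomposing $E = X\sqcup Y\sqcup Z\sqcup W$ so that $\alpha = \{X\cup Z,\,Y\cup W\}$ and so that the decomposition tracks the positions of $f$ and $\einv$, one of the two symmetrized ideal edges produced from $\alpha$ will be ascending and will split $f$. The main obstacle — and the delicate combinatorial point — is making this choice consistent across all simplices of $\bZ(\rho)$: the outputs must be pairwise compatible so that $\Phi$ defines a genuine poset map to which Lemma~\ref{poset} applies. I expect organizing $\Phi$ by the positions of $f,\einv$ relative to the two sides of $\alpha$, together with a tie-breaking rule driven by the well-order of $\mu$, to resolve this coherence problem, with Lemma~\ref{consequences-of-equality} ensuring that the relevant ascending edges are distinct whenever required.
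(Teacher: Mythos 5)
Your dimension bound and your $n=2$ base case are correct (and the base case is in the same spirit as the paper's use of the Key Lemma), but the inductive step — which is the entire content of the theorem — is not a proof, and the gap you yourself flag is exactly where the real work lies. A vertex-wise replacement $\alpha\mapsto\Phi(\alpha)$ of ascending edges not splitting $f$ by ascending edges splitting $f$ must carry every simplex of $\bZ(\rho)$ to a simplex: the outputs must be pairwise compatible \emph{and} compatible with the unreplaced vertices of the simplex. The Key Lemma only guarantees that one of two symmetrized candidates is ascending, so the choice is forced and varies from vertex to vertex; nothing in your plan (and no tie-breaking by $\mu$) makes these forced choices coherent, and simple examples of compatible $\alpha,\beta$ not splitting $f$ produce incompatible candidates. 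Note also that $\bZ(\rho)$ is not the order complex of a poset on its vertex set (compatibility is ``one side disjoint from one side,'' not inclusion), so Lemma~\ref{poset} cannot be applied to such a vertex map as stated; the paper only ever invokes the poset lemma on the poset of ideal trees ordered by inclusion. It is moreover not clear that $\bZ^f(\rho)$ is a deformation retract of $\bZ(\rho)$ at all; the paper never attempts such a retraction.

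The second, more structural, missing idea is that your induction hypothesis is too weak to be usable. Whatever local analysis you do (links of ideal edges inside $\bZ^f(\rho)$, or subcomplexes of edges contained in / containing a given side), the complexes that appear are not ideal-edge complexes of a rose of smaller rank: their ``letters'' are blocks of $E$ (sides of previously chosen ideal edges, pairs $X_i,\X_i$, leftover singletons) whose norms are only known to satisfy inequalities like $|X_i|,|\X_i|\ge|e_i|$, computed in the \emph{original} marked rose. This is precisely why the paper generalizes to $V$-ideal edges and proves Theorem~\ref{thm:general} ($\bZ(V)$ is $(2m+k-4)$-spherical) by lexicographic induction on $(m,k)$, with the $m=1$ case (Proposition~\ref{lem:onek}) strengthened to ``sphere or contractible,'' and with the remaining vertices attached in a carefully chosen order (increasing size, then switching to decreasing size to handle the $i=m$ family), checking at each attachment that the link decomposes as a join $\bZ(V')\ast\bZ(V'')$ of complexes covered by the stronger hypothesis. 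Your sketch neither formulates such a strengthened statement nor supplies a substitute for it, so the induction as proposed cannot close.
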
 

We will prove the theorem by induction on $n$, but the induction requires considering more general complexes.  
Order the edges of $R$ so that $|e_1|>|e_i|$ for   $i>1$.  Suppose $E$ decomposes into the disjoint union of subsets $X_1,\X_1,\ldots,X_m,\X_m, Y_1,\ldots,Y_k$ with
\begin{itemize}
\item $m\geq 1$
\item $e_i\in X_i$
\item $\overline e_i\in \X_i$
\item $|X_i|,|\X_i|\geq |e_i|$
\end{itemize}

\begin{definition} 
A {\em $V$-ideal edge} is a partition of $$V=\{X_1,\X_1,\ldots,X_m,\X_m, Y_1,\ldots,Y_k\}$$  into two pieces, each with at least two elements, which separates some  $X_i$ from $\X_i$ (recall $m\geq 1$).  
\end{definition}
  Note that a $V$-ideal edge is also an  ideal edge by our old definition, i.e. $V$-ideal edges still split some $e_i$ and can still be classified as  ascending or descending.  Let 
$\bZ(V)$  be the complex whose simplices are sets of pairwise-compatible ascending $V$-ideal edges.  
By the above remark, $\bZ(V)$ is a  subcomplex of $\bZ(\rho).$

\begin{theorem}\label{thm:general}  If $2m+k-4\geq 0$ then the complex $\bZ(V)$ of ascending $V$-ideal edges is $(2m+k-4)$-spherical.   
\end{theorem}

The case $m=n, k=0$ with $ X_i=\{e_i\},\X_i=\{\einv_i\}$  is Theorem~\ref{thm:main}. The proof of the theorem is by induction on the pair $(m,k)$, ordered lexicographically.  We will need the following lemma, which is basically a restatement of the Key Lemma, to get started with $m=1$.
For $m=1$ an ideal edge separates $X_1$ from $\X_1$ so one side is of the form $X_1\cup P$, where $P$ is a proper subset of $\mathbb Y=\{Y_1,\ldots,Y_k\}$.

\begin{lemma}\label{cor:descending}  Suppose $m=1$ and $k\geq 2$, and set $\mathbb Y=\{Y_1,\ldots,Y_k\}$.  Then for  proper subsets $P$ and $Q$   of $\mathbb Y$ we have       
\begin{enumerate}
 \item  If $X_1\cup P$ is descending for $e_1$ then $X_1\cup (\mathbb Y-P)$ is ascending for $e_1$.  
\item  Suppose  $X_1\cup P$ and $X_1\cup Q$  are descending  for $e_1$ and $P\cap Q=\emptyset$.  If $Q\neq \mathbb Y-P$ then $X_1\cup P\cup Q$ is descending  for $e_1$. 
\item  Suppose  $X_1\cup P$ and $X_1\cup Q$  are descending for $e_1$ and $P\cup Q=\mathbb Y$.  If $P\cap Q$ is non-empty, then $X_1\cup (P\cap Q)$  is descending for $e_1$.
\end{enumerate}
\end{lemma}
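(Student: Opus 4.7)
The plan is to derive all three parts from a single strategy: apply the Key Lemma to a carefully chosen partition of $E$ into four pieces $(X,Y,Z,W)$. In each case the hypotheses $|X_1|,|\X_1| \geq |e_1|$, combined with the strict positivity $Z \ndot W > 0$ for nonempty disjoint $Z,W$ (Lemma~\ref{dot-and-norm}), will do all the work; Lemma~\ref{consequences-of-equality}(3) then upgrades ``$\leq |e_1|$'' to the strict inequality needed to conclude ``ascending'' or ``descending''. Throughout I identify a subset $P \subseteq \mathbb{Y}$ with $\bigcup_{Y_j \in P} Y_j \subseteq E$, as the paper does, and I freely use $|S|=|S^c|$ to replace the norm of a set by the norm of its complement.

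For part (1), take $X = X_1$, $Y = \X_1$, $Z = P$, $W = \mathbb{Y} - P$. Then $A := X \cup Z = X_1 \cup P$, while $B := Y \cup Z$ has complement $X_1 \cup (\mathbb{Y}-P)$, so $|B| = |X_1 \cup (\mathbb{Y}-P)|$. The Key Lemma gives
\[
|X_1 \cup P| + |X_1 \cup (\mathbb{Y}-P)| = |X_1| + |\X_1| + 2\bigl(P \ndot (\mathbb{Y}-P)\bigr) > 2|e_1|,
\]
so $|X_1 \cup P| < |e_1|$ forces $|X_1 \cup (\mathbb{Y}-P)| > |e_1|$, i.e.\ ascending.

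For part (2), let $R = \mathbb{Y}-(P \cup Q)$; this is nonempty because $P,Q$ are disjoint and $Q \neq \mathbb{Y}-P$. Partition $E$ via $X = X_1$, $Y = \X_1 \cup R$, $Z = P$, $W = Q$. Tracking complements gives $|X \cup Z| = |X_1 \cup P|$, $|Y \cup Z| = |X_1 \cup Q|$, and $|Y| = |X_1 \cup (P \cup Q)|$, so the Key Lemma becomes
\[
|X_1 \cup P| + |X_1 \cup Q| = |X_1| + |X_1 \cup (P \cup Q)| + 2(P \ndot Q),
\]
and the descending hypotheses on the left together with $|X_1| \geq |e_1|$ and $P \ndot Q > 0$ on the right force $|X_1 \cup (P \cup Q)| < |e_1|$. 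For part (3), set $S = P \cap Q$, $T = P \setminus Q$, $U = Q \setminus P$; since $P,Q$ are proper with $P\cup Q = \mathbb{Y}$, both $T$ and $U$ are nonempty. The ``dual'' partition $X = X_1 \cup S$, $Y = \X_1$, $Z = T$, $W = U$ yields
\[
|X_1 \cup P| + |X_1 \cup Q| = |X_1 \cup S| + |\X_1| + 2(T \ndot U),
\]
and the same sort of estimate gives $|X_1 \cup S| < |e_1|$.

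There is no conceptual obstacle, only bookkeeping: guessing the right partition (guided by the heuristic that the resulting identity should be symmetric in $P$ and $Q$ whenever the conclusion is), correctly converting each $|X \cup Z|$ and $|Y \cup Z|$ into a norm of a set of the form $X_1 \cup (\text{subset of }\mathbb{Y})$ via complementation, and checking that the auxiliary sets $R, T, U$ are nonempty under the stated hypotheses so that the Key Lemma's contribution $2(Z \ndot W)$ is strictly positive.
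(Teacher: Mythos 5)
Your proposal is correct and follows essentially the same route as the paper: the same application of the Key Lemma with the same underlying four-set partitions. For part~(1) and part~(2) your choices of $X,Y,Z,W$ are literally the paper's; for part~(3) your $(X,Y,Z,W)=(X_1\cup(P\cap Q),\,\X_1,\,P\setminus Q,\,Q\setminus P)$ is the paper's $(\X_1,\,X_1\cup(P\cap Q),\,\mathbb{Y}-P,\,\mathbb{Y}-Q)$ after swapping $X\leftrightarrow Y$ and $Z\leftrightarrow W$, using $\mathbb{Y}-P=Q\setminus P$ and $\mathbb{Y}-Q=P\setminus Q$ which hold since $P\cup Q=\mathbb{Y}$. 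The only stylistic difference is that you read the conclusion off the Key Lemma identity directly, whereas the paper phrases (2) and (3) as proofs by contradiction; the arithmetic is identical.
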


 \begin{proof} Set $\lambda=|e_1|$.
Then (1) is the Key Lemma with $X=X_1,\, Y=\X_1,$  $Z= P$ and $W=\mathbb Y-P$.
Note that $Z\cdot W>0$ by Lemma~\ref{dot-and-norm} as $P$ and $\mathbb Y - P$ are both non-empty.

For (2), set  $X=X_1,$ $Z=P$, $W=Q$, $Y=(X\cup Z \cup W)^c$, and $\lambda=|e_1|$.  By assumption $|X_1|\geq \lambda$.  If $|X_1\cup P\cup Q|\geq \lambda$ as well, then by the Key Lemma at least one of $|X\cup Z|= |X_1\cup Q|>\lambda $ or $|X\cup W|=|X_1\cup P|>\lambda$, giving a contradiction.  

Statement (3) is proved in the same way with $X=\overline{X_1}$, $Z=\mathbb Y - P$, $W=\mathbb Y -Q$, $Y=(X\cup Z \cup W)^c$, and $\lambda=|\overline{e}_1|=|e_1|$.
\end{proof}

For $m=1$\, Theorem~\ref{thm:general} takes the following stronger form.

\begin{proposition}\label{lem:onek} Let $m=1$ and $k\geq 2$,  If there are no descending $V$-ideal edges then $\bZ(V)$ is    a $(k-2)$-sphere; otherwise it is   a contractible subset of this sphere. 
\end{proposition}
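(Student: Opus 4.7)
If $\mathcal D = \emptyset$ then $\bZ(V) = \bI(V)$, which is the order complex of the poset of non-empty proper subsets of $\mathbb Y$: this is the barycentric subdivision of $\partial\Delta^{k-1}$ and hence homeomorphic to $S^{k-2}$, so the sphere case is immediate.

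Now suppose $\mathcal D \neq \emptyset$. I would fix $P_0 \in \mathcal D$ of maximal cardinality and set $P^* = \mathbb Y \setminus P_0$, which lies in $\mathcal A$ by Lemma~\ref{cor:descending}(1). The plan is to show $\bZ(V)$ deformation retracts onto the subcomplex spanned by $\mathcal A_L \cup \mathcal A_U$, where $\mathcal A_L = \{P \in \mathcal A : P \subseteq P^*\}$ and $\mathcal A_U = \{P \in \mathcal A : P \supseteq P^*\}$; each has $P^*$ as a universal extremum, so each order complex is a cone with apex $P^*$ and their union is contractible. The retraction is built from two ascending-preserving moves. \emph{Shrink.} For $P \in \mathcal A$ with $P \not\subseteq P_0$, one has $P \cap P^* = P \setminus P_0 \in \mathcal A$: otherwise $P_0$ and $P \setminus P_0$ would be disjoint descending edges and Lemma~\ref{cor:descending}(2) would produce a descending edge $P \cup P_0 \supsetneq P_0$, contradicting maximality of $P_0$ (the degenerate case $P \cup P_0 = \mathbb Y$ forces $P \cap P^* = P^* \in \mathcal A$ by Lemma~\ref{cor:descending}(1)). \emph{Grow.} For $P \in \mathcal A$ with $P \subsetneq P_0$, one has $P \cup P^* \in \mathcal A$; this follows from the Key Lemma applied with $X = X_1 \cup P$, $Y = \X_1$, $Z = P^*$, $W = P_0 \setminus P$, giving
\[
|X_1 \cup P \cup P^*| + |X_1 \cup P_0| = |X_1 \cup P| + |X_1| + 2\, P^* \ndot (P_0 \setminus P),
\]
whence combining $|X_1 \cup P| > |e_1|$, $|X_1| \geq |e_1|$, $|X_1 \cup P_0| < |e_1|$, and positivity of the dot product (Lemma~\ref{dot-and-norm}) yields $|X_1 \cup P \cup P^*| > |e_1|$.

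Having \emph{Shrink} and \emph{Grow}, I would assemble the deformation retraction. Quillen's Poset Lemma (Lemma~\ref{poset}) applied with $f(P) = P \cap P^*$ retracts the subposet $\{P \in \mathcal A : P \not\subseteq P_0\}$ onto $\mathcal A_L$; the remaining vertices $P \subsetneq P_0$ are collapsed one at a time using \emph{Grow}, since the ascending link of such $P$ in $\bZ(V)$ has $P \cup P^*$ as a cone apex and is therefore contractible. The main obstacle is reconciling the mixed ``shrink'' and ``grow'' directions, as Quillen's Poset Lemma requires one consistent monotonic direction; the cleanest route is to execute the two operations sequentially, first collapsing the vertices of $\mathcal A_B = \{P \in \mathcal A : P \subsetneq P_0\}$ via their contractible links in $\bZ(V)$, then applying the Poset Lemma to the remainder to retract onto $\mathcal A_L \cup \mathcal A_U$.
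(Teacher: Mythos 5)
Your decomposition is genuinely different in organization from the paper's: you anchor at a \emph{maximal} descending edge $P_0$ and try to retract $\bZ(V)$ inward, whereas the paper anchors at a descending edge $D$ of \emph{minimal} size, first contracting the subcomplex of ascending edges not containing $D$ via the poset map $B\mapsto B\cup(\mathbb Y-D)$, and then adding the remaining ascending edges in order of increasing size so that each new link is a proper $\bZ(V')$ handled by induction on $k$. Your sphere case, your \emph{Shrink} move, and your \emph{Grow} move all check out (modulo a harmless typo: the Key Lemma gives $|Y|=|\X_1|$, not $|X_1|$, on the right-hand side of your display, but $|\X_1|\geq|\einv_1|=|e_1|$ so the inequality survives). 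The Poset Lemma step $P\mapsto P\cap P^*$ on $\{P\in\mathcal A: P\not\subseteq P_0\}$ and the terminal contractibility of $\mathcal A_L$ as a cone on $P^*$ are also correct.

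The gap is in the collapse of $\mathcal A_B=\{P\in\mathcal A:P\subsetneq P_0\}$. The claim that the ascending link of such a $P$ in $\bZ(V)$ ``has $P\cup P^*$ as a cone apex'' is false: an ascending $Q\supsetneq P$ with $Q\not\subseteq P_0$ need not be nested with $P\cup P^*$ (e.g.\ $Q$ can meet both $P_0\setminus P$ and $P^*$ while missing part of $P^*$), so $P\cup P^*$ is not even a vertex of that link's star. The correct justification for contractibility of the link is inductive, just as in the paper. If you remove the vertices of $\mathcal A_B$ in order of \emph{increasing} size, then when you reach $P$ its link in the current complex is exactly $\{Q\in\mathcal A: Q\supsetneq P\}$, which is isomorphic (by $Q\mapsto Q\setminus P$) to $\bZ(V'')$ for the decomposition $V''$ obtained by replacing $X_1$ with $X_1\cup P$ and deleting the singletons in $P$; this has $k''=k-|P|\geq 2$. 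Since $P_0\setminus P$ is a descending $V''$-ideal edge (because $X_1\cup P_0$ is descending), the induction hypothesis on $k$ says $\bZ(V'')$ is contractible rather than a sphere. You also need to state this order of collapse explicitly; without it the links pick up extra incomparable vertices and the argument breaks. In short: your proof cannot avoid the induction on $k$ that you appear to want to sidestep, and once that induction is invoked the argument is essentially a mirror image of the paper's, with the complex built down from a maximal $P_0$ rather than up from a minimal $D$.
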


\begin{proof}
We have $V=\{X_1,\X_1,Y_1\ldots, Y_k\}$.     The proof is by induction on $k$. If $k=2$ there are only two ideal edges, represented by  $X_1\cup Y_1$ and $X_1\cup Y_2$.  These are incompatible and at least one of them is ascending, by Lemma~\ref{cor:descending} (1).  
Thus $\bZ(V)$ is either a point or a $0$-sphere.  

For any ideal edge $\alpha$   call the side of $\alpha$ containing $X_1$ the {\em inside} of $\alpha$, and the side containing $\X_1$ the {\em outside}.  The cardinality of the inside is the {\em size} of the ideal edge. For any $k\geq 2$, intersecting the inside of an ideal edge with $\mathbb Y=\{Y_1\ldots, Y_k\}$ gives a one-to-one correspondence between ideal edges and proper subsets of $\mathbb Y$.  In the remainder of the proof we use this correspondence, referring to a proper subset of $\mathbb Y$ as an ideal edge.  
With this convention, the complex of ideal edges can be identified with the barycentric subdivision of the boundary of a $(k-1)$-simplex $\Delta$.  If there are no descending ideal edges, then $\bZ(V)=\bdry\Delta$, which is a $(k-2)$-sphere.  Otherwise, let $D$ be a descending ideal edge of minimal  size.  
Then $A=\mathbb Y-D$  is ascending by Lemma~\ref{cor:descending} (1).

Let $\bZ_A$ be the subcomplex  of $\bZ$ spanned by ascending ideal edges $B$ which do not contain $D$, i.e. such that $B\cup A$ is a proper subset of $\mathbb Y$.  We claim that $B\cup A$ is ascending, so the poset maps $B\to B\cup A\to A$ give a contraction of $\bZ_A$ to the point $A$.  
If $B\subseteq A$ then $B\cup A=A$ is certainly ascending.   If $B$ intersects $D$ note that $B\cap D=(B\cup A)\cap D.$   If $B\cup A$ were descending, then $B\cap D$ would be descending, by Lemma~\ref{cor:descending} (2).  But $B\cap D$ is a proper subset of $D$ and $D$ is minimal, so $B\cap D$ is ascending.

We now add the remaining ascending ideal edges to $\bZ_A$ in order of increasing size, and show that the complex remains contractible after each addition.  Ideal edges of the same size are not compatible, so we may add them independently.  Let 
$\bZ_{\ell}$ denote the subcomplex of $\bZ$ spanned by $\bZ_A$ and all ascending ideal edges of size at most $\ell$, and suppose $B$ has size $\ell+1$, i.e. $B\cap \mathbb Y$ contains $\ell$ elements.  We need to show that the link of $B$ in $\bZ$ intersects $\bZ_{\ell}$ in a contractible subset.  But this intersection consists of ascending subsets of $B$ (ascending sets {\em containing} $B$ are not in $\bZ_{\ell}$ since they contain $D$ and are larger than $B$).  Therefore, replacing $\X_1$ by $\X_1\cup (\mathbb Y-B)=E-B$, we  can identify $\lk(B)\cap \bZ_\ell$ with $\bZ(V')$ for the  partition $V'=\{X_1,E-B,Y_1,\ldots,Y_\ell\}$.  By induction, $\bZ(V')$ is either an $(\ell-2)$-sphere or contractible.  But it is not the whole sphere because it contains a descending ideal edge, namely $D$.  
Therefore the link of $B$ intersects $\bZ_\ell$ in a contractible set.
\end{proof}

We are now in a position to prove Theorem~\ref{thm:general} for all values of $m$ and $k$.

\begin{proof}  We prove the theorem by induction on pairs $(m,k)$, ordered lexicographically.   Lemma~\ref{lem:onek} establishes the theorem for $m=1$ so we may assume $m\geq 2$.   

Let $\bZ_0$ be the subcomplex of $\bZ=\bZ(V)$ spanned by ideal edges which are ascending for some $e_i$ with $i>1$.  Since it is irrelevant whether ideal edges in $\bZ_0$ separate $X_1$ from $\X_1$, we can identify $\bZ_0$ with  $\bZ(V'),$ with $$V'=\{X_2,\X_2,\ldots X_m,\X_m,Y_1,\ldots,Y_k, Y_{k+1}=X_1,Y_{k+2}=\X_1\},$$ which is $(2m+k-4)$-spherical by induction.

Ideal edges in $\bZ-\bZ_0$ split $e_1$ but do not split any $e_i$ for $i>1$, since if they did they would be ascending for that $e_i$ and hence in $\bZ_0$. Therefore the inside of such an ideal edge is of the form $$A=X_1\cup X_2\cup\X_2\cup\ldots\cup X_i\cup \X_i\cup Y_1\cup\ldots\cup Y_j.$$  
  (see Figure~\ref{fig:ideal}). For the remainder of the proof we specify an ideal edge by giving its inside.  
  We will start adding these to $\bZ_0$ in order of increasing size, where the {\em size} of $A$ is the pair $(i,j)$ ordered lexicographically.  Ideal edges of the same size are not compatible, so we may add them independently.
  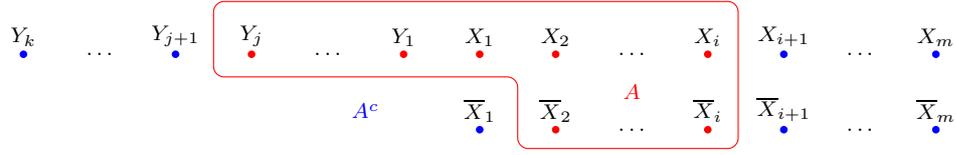
\begin{figure}
\begin{center} 
\begin{tikzpicture}[scale=1] 
\fill [red] (-1,1) circle (.05); 
\node [above](x) at (-1,1) {$ Y_1$};
\node  (x) at (-2,1) {$\ldots$};
\fill [blue] (-4,1) circle (.05); 
\node [above](x) at (-4,1) {$ Y_{j+1}$};
\fill [red] (-3,1) circle (.05); 
\node [above](x) at (-3,1) {$ Y_j$};
\node  (x) at (-5,1) {$\ldots$};
\fill [blue] (-6,1) circle (.05); 
\node [above](x) at (-6,1) {$ Y_k$};
\fill [blue] (0,0) circle (.05); 
\node [above](x) at (0,0) {$\X_1$};
\fill [red] (0,1) circle (.05); 
\node [above](x) at (0,1) {$X_1$};
\fill [red] (1,0) circle (.05); 
\node [above](x) at (1,0) {$\X_2$};
\fill [red] (1,1) circle (.05); 
\node [above](x) at (1,1) {$X_2$};
\node  (x) at (2,0) {$\ldots$}; 
\node  (x) at (2,1) {$\ldots$};
\fill [red] (3,0) circle (.05); 
\node [above](x) at (3,0) {$\X_i$};
\fill [red] (3,1) circle (.05); 
\node [above](x) at (3,1) {$X_i$};
\fill [blue] (4,0) circle (.05); 
\node [above](x) at (4,0) {$\X_{i+1}$};
\fill [blue] (4,1) circle (.05); 
\node [above](x) at (4,1) {$X_{i+1}$}; 
\node  (x) at (5,0) {$\ldots$};
\node  (x) at (5,1) {$\ldots$};
\fill [blue] (6,0) circle (.05); 
\node [above](x) at (6,0) {$\X_m$};
\fill [blue] (6,1) circle (.05); 
\node [above](x) at (6,1) {$X_m$};
\node [red] (A) at (2,.5) {$A$};
\node [blue] (Ac) at (-1.5,.25) {$A^c$};
\draw [rounded corners, red](-3.5,1.7) to (3.4,1.7) to (3.4,-.25) to (.5,-.25) to (.5,.7) to (-3.5,.7) --cycle;
  \end{tikzpicture}
\end{center}
\caption{Ideal edge $\alpha=(A, A^c)$ in $\bZ-\bZ_0,$ of size $(i,j)$}
\label{fig:ideal}
\end{figure}

Let $\alpha\in \bZ-\bZ_0$ with inside $A$ as above, and let $\bZ_{<\alpha}$ denote the complex spanned by $\bZ_0$ and all ideal edges of size less than $(i,j)$.  We may assume that $\bZ_{<\alpha}$ is $(2m+k-4)$-spherical by induction.  To prove that it is still $(2m+k-4)$-spherical after adding $\alpha$ we will show that $\lk(\alpha)\cap   \bZ_{<\alpha}$ is $(2m+k-5)$-spherical.

\begin{figure}
\begin{center}
\begin{tikzpicture}[scale=1] 
\fill [red] (-1,1) circle (.05); 
\node [above](x) at (-1,1) {$Y_1$};
\node  (x) at (-2,1) {$\ldots$};
\fill [red] (-3,1) circle (.05); 
\node [above](x) at (-3,1) {$Y_j$};
\fill [blue] (0,0) circle (.05); 
\node [above, blue](x) at (0,0) {$A^c$};
\fill [red] (0,1) circle (.05); 
\node [above](x) at (0,1) {$X_1$};
\fill [red] (1,0) circle (.05); 
\node [above](x) at (1,0) {$\X_2$};
\fill [red] (1,1) circle (.05); 
\node [above](x) at (1,1) {$X_2$};
\node  (x) at (2,0) {$\ldots$};
\node  (x) at (2,1) {$\ldots$};
\fill [red] (3,0) circle (.05); 
\node [above](x) at (3,0) {$\X_i$};
\fill [red] (3,1) circle (.05); 
\node [above](x) at (3,1) {$X_i$};
  \end{tikzpicture}
\end{center}
\caption{The decomposition $V'$ of $E$}
\label{fig:sublink}
\end{figure}

\begin{figure}
\begin{center}
\begin{tikzpicture}[scale=1] 
\fill [blue] (-1,1) circle (.05); 
\node [above](x) at (-1,1) {$Y_{j+1}$};
\node  (x) at (-2,1) {$\ldots$};
\fill [blue] (-3,1) circle (.05); 
\node [above](x) at (-3,1) {$Y_k$};
\fill [blue] (0,0) circle (.05); 
\node [above](x) at (0,0) {$\X_1$};
\fill [red] (0,1) circle (.05); 
\node [above, red](x) at (0,1) {$A$};
\fill [blue] (1,0) circle (.05); 
\node [above](x) at (1,0) {$\X_{i+1}$};
\fill [blue] (1,1) circle (.05); 
\node [above](x) at (1,1) {$X_{i+1}$};
\node  (x) at (2,0) {$\ldots$};
\node  (x) at (2,1) {$\ldots$};
\fill [blue] (3,0) circle (.05); 
\node [above](x) at (3,0) {$X_m$};
\fill [blue] (3,1) circle (.05); 
\node [above](x) at (3,1) {$X_m$};
  \end{tikzpicture}
\end{center}
\caption{The decomposition $V^{\prime\prime}$ of $E$}
\label{fig:superlink}
\end{figure}

The complex $\lk(\alpha)$ is the join of two subcomplexes, namely the subcomplex spanned by ascending sets contained in  $A$ and that spanned by ascending sets containing $A$.  Ascending sets contained in $A$ are exactly $\bZ(V'),$ where 
$$V'=\{X_1,A^c,X_2,\X_2\ldots,X_i,\X_i, Y_1,\ldots,Y_j\},$$ (see Figure~\ref{fig:sublink}).
Since all ascending sets contained in $A$ are either in $\bZ_0$ or are of smaller size they are all in  $\bZ_{<\alpha},$ i.e. 
$\bZ_{<\alpha}\cap \bZ(V^{\prime})=\bZ(V^{\prime})$, which is $(2i+j-4)$-spherical by induction.
  
Ascending sets containing $A$ are exactly $\bZ(V^{\prime\prime})$, where
 $$V^{\prime\prime}=\{A, \X_1, X_{i+1},\X_{i+1}\ldots,X_m,\X_m,Y_{j+1},\ldots, Y_k\},$$ (see Figure~\ref{fig:superlink}).
If $i<m$ then $\bZ_{<\alpha}\cap \bZ(V^{\prime\prime})$ is isomorphic to $\bZ_0(V^{\prime\prime})$: an ascending $V''$-ideal edge separating only $A$ and $\X_1$ has size larger than $\alpha$ and does not belong to $\bZ_{<\alpha}$; the other ascending $V''$-ideal edges already belong to $\bZ_0(V)$.  Since $\bZ_{<\alpha}\cap \bZ(V^{\prime\prime})\cong\bZ_0(V^{\prime\prime})$ is $(2(m-i+1)+(k-j)-4)$-spherical by induction, the entire complex $\lk(\alpha)\cap \bZ_{<\alpha}$ is $((2i+j-4)) + (2(m-i+1)+(k-j)-4)+1=(2m+k-5)$-spherical, as required.  

This doesn't work if $i=m$ since in that case $V^{\prime\prime}=\{A,\X_1,Y_{j+1},\ldots,Y_k\},$ so $\bZ_0(V^{\prime\prime})$ is empty. The trick is to now   add the remaining edges in order of {\em decreasing} size, as follows: 
 
 \begin{figure}
\begin{center}
\begin{tikzpicture}[scale=1] 
\fill [red] (-1,1) circle (.05); 
\node [above](x) at (-1,1) {$Y_1$};
\node  (x) at (-2,1) {$\ldots$};
\fill [red] (-3,1) circle (.05); 
\node [above](x) at (-3,1) {$Y_j$};
\fill [blue] (0,0) circle (.05); 
\node [above, blue](x) at (0,0) {$A^c$};
\fill [red] (0,1) circle (.05); 
\node [above](x) at (0,1) {$X_1$};
\fill [red] (1,0) circle (.05); 
\node [above](x) at (1,0) {$\X_2$};
\fill [red] (1,1) circle (.05); 
\node [above](x) at (1,1) {$X_2$};
\node  (x) at (2,0) {$\ldots$};
\node  (x) at (2,1) {$\ldots$};
\fill [red] (3,0) circle (.05); 
\node [above](x) at (3,0) {$\X_m$};
\fill [red] (3,1) circle (.05); 
\node [above](x) at (3,1) {$X_m$};
  \end{tikzpicture}
\end{center}
\caption{The decomposition $V'$ of $E$ when $i=m$}
\label{fig:sublink_alt}
\end{figure}

\begin{figure}
\begin{center}
\begin{tikzpicture}[scale=1] 
\fill [blue] (-1,1) circle (.05); 
\node [above](x) at (-1,1) {$Y_{j+1}$};
\node  (x) at (-2,1) {$\ldots$};
\fill [blue] (-3,1) circle (.05); 
\node [above](x) at (-3,1) {$Y_k$};
\fill [blue] (0,0) circle (.05); 
\node [above](x) at (0,0) {$\X_1$};
\fill [red] (0,1) circle (.05); 
\node [above, red](x) at (0,1) {$A$};
  \end{tikzpicture}
\end{center}
\caption{The decomposition $V^{\prime\prime}$ of $E$ when $i=m$}
\label{fig:superlink_alt}
\end{figure}

 Let $\bZ_1(V)$ be the subcomplex of $\bZ(V)$ spanned by $\bZ_0$ and all ideal edges in $\bZ-\bZ_0$ added so far, i.e. of size less than $(m,0)$.  Suppose $\alpha\in \bZ(V)-\bZ_1(V)$.  Then the {\em outside} $A^c$ of $\alpha$ is of the form $\X_1\cup Y_{j+1}\cup\ldots\cup Y_k$.  We will add these to $\bZ_1(V)$ in order of increasing number of $Y's$, checking at each stage   that $\lk(\alpha)\cap   \bZ_{<\alpha}$ is $(2m+k-5)$-spherical. 
  As before, edges of the same size are not compatible so we may consider them separately.  As before, the link of $\alpha$ is the join of $\lk(\alpha)\cap \bZ(V'),$ corresponding to subsets contained in $A$ (whose complement contains $A^c$), and $\lk(\alpha)\cap \bZ(V^{\prime\prime}),$ corresponding to subsets containing $A$ (whose complement is contained in $A^c$).  

{Note that $\bZ_{<\alpha}$ contains everything in $\bZ(V^{\prime})$ except the subsets $A^c\cup P$ for $P\subset\{Y_{1},\ldots,Y_j\}$ as we are adding ideal edges in order of increasing $Y's$-count. Thus, $\bZ_{<\alpha}\cap \bZ(V^\prime)$  is equal to $\bZ_1(V^\prime)$, which is $(2m+j-4)$-spherical by induction.}

{Similarly, we} have already added all subsets $\X_1\cup P$ with $P$ a proper subset of  $\{Y_{j+1},\ldots,Y_k\}$, so $\bZ_{<\alpha}\cap \bZ(V ^{\prime\prime})=\bZ(V ^{\prime\prime}),$ and so  is $(k-j-2)$-spherical.
    Thus $\lk(\alpha)\cap   \bZ_{<\alpha}$ is $(2m+j-4)+(k-j-2)+1=(2m+k-5)$-spherical as required.
\end{proof}

\section{Relation with the Bestvina-Feighn bordification}

In this section we recall Bestvina and Feighn's construction of the bordification $\BFn$, then prove that it is equivariantly homeomorphic to $\BVn$. 

\subsection{Cells of the BF bordification}

Bestvina and Feighn also constructed their bordification one cell at a time, producing an enlargement $\Sigma(G,g)$ of each (open) simplex $\osigma(G,g)$ in $\On$ and showing these $\Sigma(G,g)$  are compatible with face relations. Thus to describe their construction we can drop the marking $g$ from the notation, as we did in Section~\ref{jewels} above.  

They first define an embedding $$i_G\colon\osigma(G) \hookrightarrow \prod_A \csigma(A),$$
where $A$ runs over all core subgraphs of $G$ (including $G$ itself), and $\csigma(A)$ is a closed regular Euclidean simplex with vertices the edges of $A$. 

A point in $\osigma(G)$ is a volume 1 metric on $G$, and the image of this point in the term indexed by $A$ is a volume 1 metric on $A,$  obtained by restricting the metric on $G$ to $A$ and then rescaling.  The space $\Sigma(G,g)$ is then defined to be the closure of the image of $\osigma(G)$ in  $\prod_A \csigma(A)$.  Bestvina and Feighn  prove that $\Sigma(G,g)$ is contractible, and even that it is homeomorphic to a cell; however in the course of proving this last fact they must  appeal to the Poincar\'e conjecture.

Bestvina and Feighn then re-introduce  the markings and show that the spaces $\Sigma(G,g)$  fit together as expected, so that the nerve of the cover is equal to the spine of Outer space.   This shows that the union $\BFn$ of the cells $\Sigma(G,g)$ is contractible.  They then extend the action of $\Outn$ on $\On$ to $\BFn$ and check that the action is proper and cocompact.

\subsection{The faces of $J(G)$} In order to give an explicit description of the faces of $J(G)$ we first establish some notation.

Index the edges of $G$ by the set   $\Delta=\{0,\ldots,m\},$   and let $\scrC$ be the collection of core subsets  of $\Delta,$  corresponding to core subgraphs of $G.$  The {\em core} of an arbitrary subset $U\subset \Delta$ is the maximal element of $\scrC$ contained in $U.$ 
 Note  that the core of $U$ is unique (though it may be empty), since the union of two core subgraphs is a core subgraph. 
 
  Let
  $\mathscr F$ be the set of singletons  $\{i\}$ which are not in $\scrC$ (i.e. the corresponding edges are not loops), and set $\mathscr S= \mathscr F \cup \scrC-\{\Delta\}$.   
  
Recall that the jewel $J(G)$ is defined to be the intersection of the standard simplex $\csigma(G)\subset \R^{m+1}$   with the half spaces $\sum_{i \in A} x_i \geq c_A$ for each core subset $A.$  

Let $e_0,\ldots,e_m$ be the standard basis of $\R^{m+1}$.  
For any subset $S$ of $\Delta$, define the vector $e_S = \sum_{i \in S} e_i$ and let $x_S$ be the function on $\sigma(G)$ defined by $x_S(\vx) = \langle \vx, e_S \rangle=\sum_{i \in S} x_i$, i.e. $x_S$ is the total volume of the subgraph corresponding to $S$ with edge lengths $x_i$. 
If we set $c_S=0$ for $s\in \mathscr F$, then $$J(G)=\{(x_0,\ldots,x_{m})\in  \R^{m+1} \mid   x_0+\ldots+x_m=1 \hbox{ and } x_S\geq c_S \hbox{ for all } S\in\mathscr S\,\}$$

We are now ready to characterize the faces of $J(G)$.

\begin{proposition}\label{faces} Let $G$ be a core graph  and $\J=J(G)$.  With    $\scrC,$ $\mathscr F$  and $\mathscr S$ as above, each codimension $k$ face of $\J$  is given by $k$ equations $x_{S_i}=c_{S_i}$ with $S_i\in \mathscr S$.  Suppose the $S_i$ are ordered so that the size never decreases  and the elements of $\mathscr F$ come first. Let $U_j=\cup_{i=1}^j S_i$. Then for some $t$ between $0$ and $k$ we have 
\begin{enumerate}
\item If $i\leq t$ then $S_i\in\mathscr F$ and 
 the core of $U_i$ is empty. In particular, $U_{t}$ is a forest in $G$.
  \item If $i> t$ then $S_{i}\in\scrC$ and $S_i$ is the core of $U_{i}$. In particular,
    as the $U_{i}$ form an increasing sequence, their cores also increase; i.e.
    \(
    S_{t+1} \subset S_{t+2} \subset \cdots \subset S_{k}
    \).
\item $U_k$ is a proper subset of $\Delta$.
\end{enumerate}
Conversely, any sets $S_1,\ldots,S_k$ satisfying these conditions determine a face of $\J$.
\end{proposition}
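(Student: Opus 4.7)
My plan is to prove the two directions separately, with the backward direction (constructing the face from the data $S_1,\ldots,S_k$) being largely a direct barycentric calculation and the forward direction (every face has the stated form) being the technical core. The central observation, already used in the proof of Proposition~\ref{prop:rosefaces}, is that the superadditivity $c_{A\cup B}>2\max(c_A,c_B)$ forces tight shaving constraints to interact in a very rigid way.

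First I handle the easy direction: given $S_1,\ldots,S_k$ satisfying (1)--(3), I build an explicit point $\vx\in J(G)$ whose only tight constraints are exactly those indexed by the $S_i$. I set $x_e=0$ for every $e\in U_t$ (allowed since $U_t$ is a forest, so no core constraint is activated), then for each $i>t$ I put mass $c_{S_i}-c_{S_{i-1}}$ uniformly on the non-empty set $S_i-U_{i-1}$ (nonempty because $S_i\supsetneq S_{i-1}$ and no strict inclusion of core subgraphs can lie inside a forest, since rank of $H_1$ strictly increases), and finally distribute the leftover $1-c_{S_k}$ uniformly on $\Delta-U_k$ (nonempty by condition (3)). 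A short check using the rank-monotonicity of the constants $c_A$ shows $x_S>c_S$ for every other $S\in\mathscr S$; this then exhibits the hyperplanes $x_{S_i}=c_{S_i}$ as defining a face of the correct codimension.

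For the converse, let $F$ be a codimension-$k$ face and let $\mathcal T\subseteq\mathscr S$ be the set of all active constraints at a relatively interior point of $F$. I split $\mathcal T=\mathcal T_{\mathscr F}\sqcup\mathcal T_{\scrC}$. For $\mathcal T_{\mathscr F}$, I observe that setting $x_e=0$ on the edges $T:=\bigcup\mathcal T_{\mathscr F}$ is consistent with the core inequalities only if $T$ has empty core (otherwise a loop inside $T$ would force $0=x_{\mathrm{core}(T)}\geq c_{\mathrm{core}(T)}>0$); hence $T$ is a forest. For $\mathcal T_{\scrC}$, I repeat the flag argument from Proposition~\ref{prop:rosefaces}: if $A,B\in\mathcal T_{\scrC}$ then $A\cup B$ is core, $x_{A\cup B}\leq c_A+c_B$, so either $A\cup B=\Delta$ (excluded because the right-hand side is small while $x_\Delta=1$) or $A\subseteq B$ or $B\subseteq A$. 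Thus $\mathcal T_{\scrC}$ is a nested chain $C_1\subsetneq\cdots\subsetneq C_\ell$. Ordering the forest constraints first (in any order) and then the $C_j$ gives the required sequence.

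The main technical step, and the one I expect to be the principal obstacle, is verifying condition (2): that in this ordering $S_{t+j}=C_j$ equals the core of $U_{t+j}=T\cup C_j$. The inclusion $C_j\subseteq\mathrm{core}(T\cup C_j)$ is trivial; for the reverse, suppose $D:=\mathrm{core}(T\cup C_j)\supsetneq C_j$. Then $D$ strictly contains $C_j$ as a core subgraph, so $\mathrm{rank}\,H_1(D)>\mathrm{rank}\,H_1(C_j)$ and hence $c_D>2c_{C_j}$ by our choice of constants. But on $F$ we have $x_D=x_{D\cap T}+x_{D-T}\leq 0+x_{C_j}=c_{C_j}<c_D$, violating $x_D\geq c_D$. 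This contradiction, combined with a parallel argument using superadditivity to rule out $U_k=\Delta$ (condition~(3)), completes the forward direction. Finally I check that the $k$ equations so obtained are linearly independent, so $F$ has codimension exactly $k$ and is defined by them.
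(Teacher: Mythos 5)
Your forward direction is sound and quite elegant: starting from the active constraints at a relative--interior point of a face, you use (i) the observation that the forest constraints $T=\bigcup\mathcal T_{\mathscr F}$ must have empty core, (ii) the superadditivity flag argument from Proposition~\ref{prop:rosefaces} to nest the active core constraints, and (iii) the estimate $x_D\leq x_{C_j}=c_{C_j}<c_D$ to show $C_j=\core(T\cup C_j)$. This reaches the conclusion by a different, and arguably more streamlined, route than the paper, which instead appeals to Proposition~\ref{prop:rosefaces} to reduce to vertices (pairs consisting of a spanning tree plus an ordering of the complementary edges) and then lets a face be any subset of the $m$ equations defining a vertex.

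The converse, however, has a genuine gap. You allocate mass $c_{S_i}-c_{S_{i-1}}$ \emph{uniformly} over $S_i-U_{i-1}$, but with the paper's stated convention ($c_A>2c_B$ for $A\supsetneq B$ core, the constant depending only on $\mathrm{rank}\,H_1$) this point need not lie in $J(G)$. Concretely, take $G$ with two vertices $u,v$, a loop $f$ at $u$, and three edges $e_1,e_2,e_3$ from $u$ to $v$ (rank $3$); set $k=1$, $t=0$, $S_1=\{f,e_1,e_2\}$, a core subgraph of rank $2$. Your construction gives $x_f=x_{e_1}=x_{e_2}=c_2/3$. But $\{f\}$ is a core loop with constraint $x_{\{f\}}\geq c_1$, and if $c_2<3c_1$ (which the condition $c_2>2c_1$ permits) then $c_2/3<c_1$, so the point violates a constraint and is not in $J(G)$. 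The face itself \emph{does} exist here — any $\alpha$ with $c_1<\alpha<c_2-c_1$ yields a valid interior point with $x_f=\alpha$ — so the proposition is true, but your specific point fails. The ``short check using rank-monotonicity'' cannot be carried out from $c_A>2c_B$ alone. The paper sidesteps this entirely by \emph{extending} the given $S_1,\ldots,S_k$ to a full chain $S_1,\ldots,S_m$ satisfying the same conditions (choose a spanning tree $T\supseteq U_t$ compatible with the chain, then order the complementary edges to refine the chain one edge at a time), which produces a vertex of $J(G)$; the given $S_i$ then cut out a face through that vertex. If you want to keep the explicit-point approach, you must either strengthen the hypotheses on the $c_r$ to something like $c_r>(m+1)c_{r-1}$ (and then argue the jewel's face lattice is constant over all admissible choices), or allocate mass adaptively along the chain rather than uniformly.
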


\begin{proof}  By Proposition~\ref{prop:rosefaces} the vertices of $J(G)$ lie on rose faces, and there is one vertex for each ordering of the petals.  A rose is obtained by collapsing all of the edges in a maximal tree $T$ of $G$; each of these edges gives a singleton in $\mathscr F$.  The remaining edges are ordered $e_1,\ldots,e_r$, where $r$ is the rank of $H_1(G)$.
What this means for the functions $x_S$ at a vertex of $J(G)$ is that $x_{S_1}=c_{S_1},\ldots,x_{S_m}=c_{S_m}$ where:
\begin{enumerate}
\item $S_1,\ldots,S_{t}$ are singletons and no union of these is in $\scrC$ (these singletons are the edges of   $T$; no core graph is contained in $T$.).
\item  $S_{t+i}$ is the core of the graph spanned by the edges in $T\cup \{e_1,\ldots,e_i\}$.  These form a chain $S_{t+1}\subset\ldots\subset S_{m}$ of core graphs. 
\end{enumerate}
Any subset of a set of equations $x_{S_i}=c_{S_i}$ that determine a vertex, determines a face containing that vertex. Since every face has vertices such subsets in fact give all of the faces.

It remains to show that any collection of sets $S_1\ldots,S_t,S_{t+1},\ldots,S_k$ satisfying conditions (1)-(3) can be enlarged to a full collection of $m$ sets   satisfying the same conditions. Such a full set of hyperplanes will define a vertex of the jewel, which is given by a spanning tree $T$ in $G$ and an ordering of the edges in the complement $G-T$. To ensure compatibility with the given sets $S_1\ldots,S_t,S_{t+1},\ldots,S_k$, we choose the spanning tree $T$ in $G$ so that $S_i$ is the core of $T\cup S_i$ for all $i>t$. One way of finding such a spanning tree runs as follows: first enlarge the forest $U_t\cap S_{t+1}$ to a maximal forest in $S_{t+1}$; add further edges to obtain a maximal forest in $S_{t+2}$, and continue until you obtain a maximal tree $T$ in $G$. This construction ensures that $S_i=\core(T\cup S_i)$ for $i>t$.

Now, choose an ordering of the edges outside $T$ compatible with the chain
  \(
    S_{t+1} \subset S_{t+2} \subset \cdots \subset S_{k}
  \), i.e. first come edges in $S_{t+1}-(T\cap S_{t+1})$ in some order, then the
  edges in $S_{t+2}-((T\cup S_{t+1})\cap S_{t+2})$ in some order, then those from
  $S_{t+3}-((T\cup S_{t+2})\cap S_{t+3})$, etc. A refinement of the chain 
  \(
    S_{t+1} \subset S_{t+2} \subset \cdots \subset S_{k}
  \)
  can be defined by adding one edge at a time, i.e. we find the steps between $U_t$ and $S_{t+1}$
  by adding the edges in $S_{t+1}-(T\cap S_{t+1})$ to $U_{t}$ one at a time in the given order. The steps from $S_{t+1}$ to $S_{t+2}$ are constructed by adding the edges from $S_{t+2}-((T\cup S_{t+1})\cap S_{t+2})$ to $S_{t+1}$ again one at a time in the given order. Continuing this way we obtain a chain of length $m$ that defines a the vertex specified by the rose $G/T$ with an ordering of its petals given by the chosen order of edges outside $T$.
\end{proof}

\begin{remark}\label{exactly-m-hyperplanes-in-any-vertex}
  It follows that any vertex of the jewel $J(G)$ satisfies exactly $m$
  equations $x_{S_i}=c_{S_i}$ with $S_i\in \mathscr S$. The vertex is defined by a
  spanning tree and an ordering of its complementary edges; and the there are only
  $m$ sets in $\mathscr S$ that can occur in a compatible collection: any such
  compatible collection can be extended to a maximal one, which is unique.
\end{remark}

\subsection{The map to $\Sigma(G)$}
Recall that the truncation constants  defining $P=J(G)$ depend only on the rank of the core subgraph, i.e. $c_A=c_r$ where $r=\rank(H_1(A))$.  Recall also that $c_{i} \ll  c_{i+1}\ll 1$ for all $i$.

A map from $\J$ to $\Sigma(G)$ is the same as a family of maps $p_A: P \to \csigma(A)$  for each $A\in\scrC$,  compatible in the sense that the image of $P$ under the product of these maps is contained in $\Sigma(G).$  We identify $\csigma(A)$ with  the positive cone of the projective space $\RP^A=P\R^A$.  We  will choose each map $p_A$ to be a perturbation of the canonical projection from $P$ to $\csigma(A)$  in which we have stretched the map near the boundary to surject onto $\csigma(A)$.  In order to glue our cell-by-cell maps to a map on the whole of $\BVn$, we need to make sure that the various stretches do not interfere with one another. We do this as follows.

For $r\geq 1$ let $g_r\colon[c_r,1]\to[0,1]$ be a smooth  function which is $0$ at $c_r$, strictly increasing between $c_r$ and $c_{r+1}$ and then constantly equal to $1.$  For $S \in \scrC$, set $g_S=g_{r}$ where $r=\rank(H_1(S)).$
 For each $A\in \scrC$
define $\pi_A\colon P\to \R^A$  by 
$$\pi_A(\vx)_i=x_i\prod g_S(x_S),$$
where the product is over all $S\in\scrC$ which contain $i$ but not all of $A$.

\begin{lemma}\label{nonzero}  Let $\vx\in\J$.   For any $A\in \scrC$, some coordinate   $\pi_A(\vx)_i$ is non-zero. 
\end{lemma}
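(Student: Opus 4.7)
The plan is to translate the condition $\pi_A(\vx)_i\neq 0$ into a combinatorial condition about which bounding hyperplanes are active at $\vx$, and then exploit a nestedness property of those hyperplanes to produce the required coordinate.

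First I would observe that since each $g_S$ vanishes only at $c_S$ (and $x_S\geq c_S$ throughout $J(G)$), the product in $\pi_A(\vx)_i$ is nonzero precisely when $x_S>c_S$ for every $S\in\scrC$ with $i\in S$ and $A\not\subseteq S$. Writing $\mathcal{S}(\vx)=\{S\in\scrC : x_S=c_S\}$ for the set of active constraints at $\vx$, and $\mathcal{S}_A(\vx)=\{S\in\mathcal{S}(\vx): A\not\subseteq S\}$, the lemma thus reduces to exhibiting some $i\in A$ with $x_i>0$ and $i\notin\bigcup\mathcal{S}_A(\vx)$.

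The key step will be to show that $\mathcal{S}(\vx)$ is a chain under inclusion; this is essentially the argument already used in the proof of Proposition~\ref{prop:rosefaces}. If $S_1,S_2\in\mathcal{S}(\vx)$ were incomparable, then $S_1\cup S_2\in\scrC$ (union of core subgraphs is core) and the hypothesis $c_A>2c_B$ for $A\supsetneq B$ gives $c_{S_1\cup S_2}>c_{S_1}+c_{S_2}$; on the other hand $x_{S_1\cup S_2}\leq x_{S_1}+x_{S_2}=c_{S_1}+c_{S_2}$, contradicting either $x_{S_1\cup S_2}\geq c_{S_1\cup S_2}$ (when $S_1\cup S_2\subsetneq\Delta$) or $x_{S_1\cup S_2}=1\gg c_S$ (when $S_1\cup S_2=\Delta$). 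Granting this, $\mathcal{S}_A(\vx)$ is itself a chain, so $\bigcup\mathcal{S}_A(\vx)$ is either empty or equal to its maximal element $S^*$.

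In the empty case, $x_A\geq c_A>0$ already supplies some $i\in A$ with $x_i>0$. Otherwise I would show $x_{A\setminus S^*}>0$ using the identity
\[
x_{A\setminus S^*}\;=\;x_{A\cup S^*}-x_{S^*}\;=\;x_{A\cup S^*}-c_{S^*},
\]
and then split on whether $A\cup S^*\subsetneq\Delta$ (in which case $x_{A\cup S^*}\geq c_{A\cup S^*}>2c_{S^*}$ because $S^*\subsetneq A\cup S^*\in\scrC$) or $A\cup S^*=\Delta$ (in which case $x_{A\cup S^*}=1\gg c_{S^*}$). Any resulting $i\in A\setminus S^*$ with $x_i>0$ lies outside every element of the chain $\mathcal{S}_A(\vx)$, and hence satisfies $\pi_A(\vx)_i\neq 0$. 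The main obstacle is really just bookkeeping with the small constants; the only mildly delicate point is the boundary case $A\cup S^*=\Delta$, where the inequality $x_{A\cup S^*}\geq c_{A\cup S^*}$ is not among the defining constraints of $J(G)$ and one must instead invoke the volume normalization $x_\Delta=1$ together with the assumption $c_S\ll 1$.
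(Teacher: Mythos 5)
Your proof is correct, and it takes a genuinely different route from the paper's. The paper reduces to the case where $\vx$ is a vertex of $\J$ (on the grounds that vertices have the most active constraints, so the most coordinates of $\pi_A$ vanish), then invokes the explicit face description of Proposition~\ref{faces} together with Remark~\ref{exactly-m-hyperplanes-in-any-vertex} to write the active constraints as $\{S_1,\ldots,S_m\}$ with increasing unions $U_i$, and finally distinguishes cases according to the smallest $k$ with $A\subseteq U_k$. You instead work at an arbitrary $\vx\in\J$ and establish directly that $\mathcal{S}(\vx)$ is a chain (via the same submodularity inequality $x_{S_1\cup S_2}\leq x_{S_1}+x_{S_2}$ and the growth of the $c_S$), then let $S^*$ be the maximal element of $\mathcal{S}_A(\vx)$ and extract a positive coordinate from $x_{A\setminus S^*}=x_{A\cup S^*}-c_{S^*}>0$. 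Your approach is more self-contained: it avoids the reduction to vertices (whose justification in the paper is somewhat informal) and does not rely on Proposition~\ref{faces} at all, only on the definition of $\J$ and the relation $c_A>2c_B$ for $A\supsetneq B$. The paper's approach buys brevity by reusing the face characterization it has already proved; yours buys independence from that machinery. Both handle the edge case $A\cup S^*=\Delta$ or $S_1\cup S_2=\Delta$ correctly by appealing to the volume normalization rather than a shaving inequality.
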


\begin{proof}  To simplify notation, write $z_i=\pi_A(\vx)_i.$

We only need to prove that $\pi_A({\vx})\neq 0$ for vertices ${\vx}$ of $\J$.  This is because vertices satisfy the largest number of relations $x_S=c_S$, making the largest number of $z_i$ equal to zero (since $g_S(c_S)=0$).  Any other point in the boundary satisfies  a subset of these relations. 
  
Since a vertex $\vx $ has codimension $m$ we have $x_{S_i}=c_{S_i}$ for some collection of sets $\{S_1,\ldots,S_m\}\in\mathscr S$ satisfying the conditions of Proposition~\ref{faces}.
We observed in Remark~\ref{exactly-m-hyperplanes-in-any-vertex} that for each set
$S \in \mathscr S - \{S_1,\ldots,S_m\}$, the inequality $x_S \neq c_S$ holds, whence
we have $g_S(x_S) > 0$.

Since the $U_i$ are strictly increasing and $U_m$ is proper, it follows that each $U_i$ must have exactly $i$ elements. 
Without loss of generality we may assume $U_i=\{1,\ldots,i\}$, so that  $0$ is used by none of the $S_i$.

If $A\in\scrC$, we now want to claim some $z_i$ is non-zero, for $i\in A$. 
Let $k$ be the smallest index such that $U_{k}$ contains $A$; if there is no such $k$
then $0\in A$ and $z_0\neq 0$, since only those factors $g_S(x_S)$ contribute to $z_0$
  where $0\in S$, but $0\in S$ implies $S\not\in\{S_1,\ldots,S_m\}$ whence $g_S(x_S)\neq 0$.

Otherwise $k\in A$, and we claim that $z_{k}\neq 0$. The expression for $z_k$ does not use any $S_{i}$ for $i < k$ since those $S_i$ do not contain $k$.    If $i\geq k$ then $U_{i}$ contains $A.$ Since $S_{i}$ is the core of $U_{i}$ and $A$ is a core subset,  $A$ is contained in $S_{i},$ so $g_{S_i}(x_{S_{i}})$ does not occur in the expression for $z_k$.  Since none of the $g_{S_i}(x_{S_{i}})$ occur in the expression for $z_k$, we must have $z_k\neq 0$.  
\end{proof}

By Lemma~\ref{nonzero}, the image of $\pi_A$ misses the origin of $\R^A$, so we can compose it with projection to the projective space $\RP^{A}=P(\R^A)$ to obtain a function  $p_A$. Since   all the terms $g_S(x_S)$ are non-negative on $\J$  the image $p_A (\J)$ is actually in $\RP^A_{\geq 0},$ which is canonically identified with the face $\sigma(A)$ of $\sigma$ spanned by $A$. 

\begin{remark}\label{delta-coordinates}
    The $\pi_A(x)_i$ provide homogeneous coordinates for the point $p_A(x),$ i.e. the point does not change if we multiply all the $\pi_A(x)_i$ by the same nonzero number. In the interior of the jewel $J(G)$, if we use the number $$\prod_{S \supseteq A}g_S(x_S)\neq 0$$ as a multiplier, we find that the point $p_A(x)$ is also given by the homogeneous coordinates
    \(
      \pi_\Delta(x)_i = x_i \prod_{i \in S} g_S(x_S)
    \). In other words, $p_\Delta$ determines $p_A$ in the interior of $J(G)$.
  \end{remark}

The next proposition shows that the maps $p_A$ are compatible with the face relations in $J(G)$ and $\csigma(G)$.  If $\phi$ is a forest in $G$,
let $\kappa_\phi\colon G\to G'$  be the associated forest collapse.  If $\Delta=\{0,\ldots,m\}$ indexes the edges of $G$, then $\phi$ corresponds to a subset $\Phi\subset \Delta$  and $\Delta'=\Delta-\Phi$ indexes the edges of $G'$.  Collapsing a forest sends core graphs to core graphs, so $\kappa_\phi$ induces a map   $\kappa_\phi\colon \scrC\to \scrC'$ sending $A\mapsto A'=A-(A\cap \Phi)$. 

\begin{lemma} The map $\kappa_\phi\colon\scrC\to \scrC'$   has a section  sending $A'$ to the core of $A'\cup\Phi$.  This core has the same rank as $A'$,  and if $B$ is any other core graph in $\kappa_\Phi^{-1}(A')$ then $\rank(B)< \rank(A')$.  
\end{lemma}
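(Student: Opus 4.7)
My plan is to verify the three assertions about the candidate section $A' \mapsto \core(A' \cup \Phi)$ one at a time.

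First, for the section property, I would show that $A' \subseteq \core(A'\cup\Phi)$. Since $A'$ is core in $G'$, every edge of $A'$ lies on an embedded cycle of $A'$; this cycle persists in $A'\cup\Phi$ (the edge sets of $A'$ and $\Phi$ are disjoint in $G$ by construction of $\kappa_\phi$), so every edge of $A'$ sits on a cycle of $A'\cup\Phi$ and therefore cannot be separating. Thus $A' \subseteq \core(A'\cup\Phi)$, and then
\[
  \kappa_\phi(A') = A' \;\subseteq\; \kappa_\phi(\core(A'\cup\Phi)) \;\subseteq\; \kappa_\phi(A'\cup\Phi) = A',
\]
giving equality.

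Second, for the rank equality: passing to the core removes only separating edges and therefore preserves $H_1$, while $A'\cup\Phi$ deformation retracts onto $A'$ by collapsing the forest $\Phi$. Hence $\rank H_1(\core(A'\cup\Phi)) = \rank H_1(A'\cup\Phi) = \rank H_1(A')$.

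Third, for strict inequality: any preimage $B$ structurally has the form $B = A' \cup \Psi$ with $\Psi \subseteq \Phi$ (the non-$\Phi$ edges of $B$ must map bijectively onto $A'$), so $A' \subseteq B \subseteq A'\cup\Phi$; since $C := \core(A'\cup\Phi)$ is the maximal core subgraph of $A'\cup\Phi$, we get $B \subseteq C$. If $B \neq C$, then $F := C \setminus B$ is a non-empty subset of the forest $\Phi$, hence itself a forest. Apply Mayer--Vietoris to the decomposition $C = B \cup F$, in which $B$ and $F$ share no edges and $H_1(F)=0$: it yields
\[
  \rank H_1(C) - \rank H_1(B) \;=\; \rank H_1(N),
\]
where $N$ is the bipartite ``nerve'' graph whose vertices are the connected components of $B$ and of $F$ and whose edges correspond to the common vertices $V(B)\cap V(F)$. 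To show $H_1(N) \neq 0$, pick any edge $e \in F$. Because $C$ is core, $e$ lies on some embedded cycle $\gamma$ in $C$, and because $F$ is a forest, $\gamma$ must also contain at least one edge of $B$. Parsing $\gamma$ into maximal blocks of $B$-edges and $F$-edges, with transitions at shared vertices, produces a closed walk in $N$ that uses each edge of $N$ at most once: distinct edges of $N$ correspond to distinct common vertices, and $\gamma$ is embedded, so each common vertex appears as a transition at most once. This walk is therefore a non-trivial simple cycle in $N$, which forces $\rank H_1(N) \geq 1$ and hence $\rank H_1(B) < \rank H_1(C) = \rank H_1(A')$.

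The main obstacle is the rank drop in the third part. A naive attempt would hope that $\kappa_\phi|_B : B \to A'$ is itself a forest collapse, which would (wrongly) give $\rank H_1(B) = \rank H_1(A')$; this overlooks that $\kappa_\phi$ may further identify vertices of $B$ joined by $\Phi$-paths lying outside $B$. The Mayer--Vietoris argument sidesteps this subtlety by computing $H_1$ of $C$ intrinsically inside $G$ rather than via the quotient to $G'$.
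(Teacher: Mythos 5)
Your proof is correct, but it is substantially heavier than the paper's, which disposes of the whole lemma in two sentences. The common skeleton is the observation that any core graph $B$ with $\kappa_\phi(B)=A'$ satisfies $A'\subseteq B\subseteq A'\cup\Phi$, hence $B\subseteq C:=\core(A'\cup\Phi)$ by maximality of the core; the divergence is in how the strict rank drop for $B\subsetneq C$ is obtained. The paper uses only two elementary facts: $H_1$-rank is monotone under passing to subgraphs, and deleting any single edge of a core graph (all of whose edges are non-separating) drops the rank by exactly one; so picking $e\in C\setminus B$ gives $\rank H_1(B)\leq \rank H_1(C-e)=\rank H_1(C)-1<\rank H_1(A')$. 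Your Mayer--Vietoris/nerve computation $\rank H_1(C)-\rank H_1(B)=\rank H_1(N)$, combined with the embedded-cycle argument showing $H_1(N)\neq 0$, reaches the same conclusion; it has the merit of quantifying the drop and of consciously avoiding the trap (which you correctly name) that $\kappa_\phi|_B$ need not be a forest collapse, but it is machinery the statement does not need. Your parts (1) and (2) fill in details the paper leaves implicit (it simply asserts that $C$ is the unique maximal element of $\kappa_\phi^{-1}(A')$, the rank statement having been justified in Section~\ref{fitting}), and they are fine in substance.

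Two places to tighten. In part (1), the embedded cycle through an edge of $A'$ lives in $G'$ and is not literally a subgraph of $A'\cup\Phi$: consecutive $A'$-edges of that cycle meet at a vertex of $G'$ which may be the image of an entire tree component of $\Phi$, so in $G$ their endpoints can be distinct and you must insert paths inside the collapsed trees to obtain a closed walk in $A'\cup\Phi$; that walk crosses the given edge exactly once, which is what shows the edge is non-separating there. Your parenthetical about disjointness of the edge sets of $A'$ and $\Phi$ does not by itself address this. In part (3), the closed walk you construct in $N$ need not be a \emph{simple} cycle (it may revisit a vertex of $N$ through different blocks), but since it repeats no edge of $N$ its edge set still contains a cycle, which is all that is needed for $\rank H_1(N)\geq 1$.
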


\begin{proof}
The core subsets of $\Delta$ that are sent to $A'$ by the collapse are partially
    ordered by inclusion and contain a unique maximal element, namely $A=\core(A'\cup \Phi)$.
  Since $A$ is a core graph, any proper subgraph of $A$ has strictly smaller rank.
\end{proof}

\begin{proposition}\label{prop:commutes} Let $\phi$ be a forest in $G$ and $\kappa_\phi\colon G\to G'$  the associated forest collapse, so that $J(G')$ is a face of $J(G)$ and $\sigma(G')$ is a face of $\sigma(G)$.  Then the following diagram commutes:
\[
\begin{tikzcd}[]
J(G') \arrow[d,"p_{A'}"] \arrow[hookrightarrow, r]& J(G)\arrow[d,"p_{A}"] \\
\csigma(G')\arrow[hookrightarrow, r]& \csigma(G)
\end{tikzcd}
\]
where $A'=\kappa_\phi(A)=A-(A\cap\phi)$.  
\end{proposition}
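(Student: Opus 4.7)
The strategy is to compute both paths through the square in homogeneous coordinates and show they agree on the nose (no rescaling needed). For $\vx\in J(G')$ the face embedding into $J(G)$ has $x_j=0$ for all $j\in\Phi$. For $i\in A\cap\Phi$ this forces $\pi_A(\vx)_i = x_i\cdot(\cdots) = 0$, which matches the face inclusion $\csigma(A')\hookrightarrow\csigma(A)$ (which sets $\Phi$-coordinates to zero). The heart of the argument is then to show $\pi_A(\vx)_i = \pi_{A'}(\vx)_i$ for every $i\in A'$.

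For such $i$ (so $i\notin\Phi$), I regroup the product $\prod_{S\in\scrC,\, i\in S,\, A\not\subseteq S} g_S(x_S)$ according to the fibres of the collapse map $\kappa_\phi\colon\scrC\to\scrC'$. Two easy facts: $i\in S$ iff $i\in S':=\kappa_\phi(S)$, and $x_S = x_{S'}$, because only $\Delta'$-coordinates are nonzero. By the preceding lemma each fibre $\kappa_\phi^{-1}(S')$ has a unique maximal element $\tilde S = \core(S'\cup\Phi)$ with $\rank\tilde S = \rank S'$, while every other element of the fibre has strictly smaller rank. Split into cases: (i) if $A'\not\subseteq S'$ then no lift contains $A$, so every $S$ appears in the product; the maximal $\tilde S$ contributes $g_{\tilde S}(x_{\tilde S}) = g_{S'}(x_{S'})$, matching the corresponding factor in $\pi_{A'}(\vx)_i$. (ii) If $A'\subseteq S'$ then $A\subseteq A'\cup\Phi\subseteq S'\cup\Phi$, and because $A$ is core it follows that $A\subseteq\core(S'\cup\Phi)=\tilde S$, so $\tilde S$ is excluded from the product; this matches the absence of $S'$ from the RHS product.

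The remaining terms are the non-maximal lifts, and they contribute trivially. If $S\in\kappa_\phi^{-1}(S')$ is not maximal then $\rank S < \rank S'$; by the monotonicity $c_{r+1}>c_r$ implied by the relation $c_A>2c_B$ for $A\supsetneq B$, we get $c_{\rank S + 1}\le c_{\rank S'}$. Combined with $x_S = x_{S'}\ge c_{S'} = c_{\rank S'}$ (which holds because $\vx\in J(G')$), this forces $x_S$ into the range where $g_S\equiv 1$, so each non-maximal lift drops out of the product. Thus every fibre contributes exactly the matching factor on the $\pi_{A'}$ side (or $1$ when there is no factor there), yielding $\pi_A(\vx)_i = \pi_{A'}(\vx)_i$ as desired. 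The real work is just the careful bookkeeping of the two cases, together with the observation that rank invariance inside a fibre is concentrated on the single maximal lift and that all other lifts are swept away by the trivial value of $g_S$ via monotonicity of the truncation constants.
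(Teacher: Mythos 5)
Your proof is correct and relies on the same key argument as the paper's: partitioning $\scrC$ into the fibres $\kappa_\phi^{-1}(S')$, identifying the unique maximal lift $\core(S'\cup\Phi)$ as the one contributing the matching factor $g_{S'}(x_{S'})$, and showing every other lift $T$ has strictly smaller rank so that $g_T(x_T)=1$ on $J(G')$. The one genuine difference is that you carry out the fibre-by-fibre bookkeeping directly for an arbitrary core graph $A$ (splitting into the cases $A'\not\subseteq S'$ and $A'\subseteq S'$), whereas the paper first invokes Remark~\ref{delta-coordinates} to reduce to the single case $A=\Delta$, restricts to the interior of $J(G')$, and extends by continuity. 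Your route avoids the interior/continuity step entirely and makes the behaviour of the $\Phi$-coordinates explicit, at the cost of tracking the extra constraint $A\not\subseteq S$ through the case analysis; the paper's route does one cleaner calculation at the price of an auxiliary reduction lemma. Both are valid.
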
 

\begin{proof}
Since all maps are continuous it suffices to prove the diagram commutes
    for $\vx$ in the interior of $J(G')$. For $\vx=(x_0,\ldots,x_m)$ in the interior of $J(G')$, viewed as a face of $J(G)$, we have that $x_S = c_S$ if and only if $S = \{i\}$ for $i \in \Phi$. Hence the multiplicative factor of Remark~\ref{delta-coordinates} is nonzero for any core subgraph $A$ of $G$, and so $p_\Delta$ determines $p_A$ on $J(G')$. Therefore, it suffices to prove the claim for $A=\Delta$.

In particular, for $\vx=(x_0,\ldots,x_m)$ in $J(G')$, we have $x_i=0$ for $i\in \Phi$. Therefore on $J(G')$,  $x_S=x_{S-(S\cap \Phi)}=x_{S'}$ for all   $S\in\scrC$.  

 We want to show that for each $j\in\Delta-\Phi=\Delta'$ we have $(p_{\Delta'}\vx)_j=(p_\Delta\vx)_j,$ i.e.
 \[
x_j\prod_{\{S'\in\scrC'| j\in S' \}} g_{S'}(x_{S'}) = x_j\prod_{\{S\in\scrC| j\in S\}} g_S(x_S).
\]

The set $\scrC$ breaks into the disjoint union of the sets $\kappa_\phi^{-1}(S')$, for $S'\in \scrC'$. For the maximal set in preimage $S=\core(S'\cup\Phi)$, we have
$r=\rank(S)=\rank(S')$ so $g_S(x_S)=g_r(x_S)=g_r(x_{S'})=g_{S'}(x_{S'})$.  If $T\in\kappa_\phi^{-1}(S')$ is not equal to $S$, then $q=\rank(T)<r$, so $g_{T}(x_{T})=g_{T}(x_{T'})=g_{q}(x_{S'})=1$ since $x_{S'}\geq c_{S'}=c_r $ and $r\geq {q+1}$.  Thus
\[
g_{S'}(x_{S'})=\prod_{T\in \kappa_\phi^{-1}(S')}g_T(x_T)=g_{S}(x_{S}) 
\]
with $S=\core(S'\cup\Phi)$.  Since this is true for all $S'\in\scrC'$, the result follows.
\end{proof}

\subsection{Homeomorphism}   We now  define $p_{\scrC}$ to be the product of all of the $p_A$ defined in the last subsection, i.e. 
$$p_{\scrC}=\prod_{A\in \scrC}p_A\colon \J\to  \prod_{A\in\scrC}  \RP^A_{\geq 0}\iso \prod_{A\in\scrC} \csigma(A).$$  
In this section we show that   $p_{\scrC}$ defines a homeomorphism from $P=J(G)$  to the closure of its image in $\Prod$.  Since $J(G)$ is compact and $\Prod$ is Hausdorff, it suffices to show that $p_{\scrC}$ is injective.

We first show that points on different faces have different images.

Let  $Q$ be a face of $\J$,  determined by subsets $S_1,\ldots,S_t,S_{t+1},\ldots,S_k$ of $\Delta$ satisfying the conditions of Proposition~\ref{faces}. Let $V_0=\Delta-U_k$ and $V_i=U_{k+1-i}-U_{k-i}$ for $i > 0$. Note that the $V_i$ are pairwise disjoint. For notational convenience in what follows, set  $A_0=\Delta$ and $A_i=S_{k+1-i}$ for $1\leq i\leq r=k-t$, so that the $A_i$ are core graphs and $A_0=\Delta\supset A_1=S_k\supset\ldots \supset A_{r}=S_{t+1}$. Since $U_{k+1-i}=U_{k-i}\cup A_i$, we have $V_i\subseteq A_i$.

\begin{lemma}\label{Qint} Let $x$ be a point in the interior of $Q$, let $1\leq\ell\leq r=k-t$ and let $z_i=p_{A_\ell}(x)_i$.  Then $z_i\neq 0$ if and only if $i\in V_\ell$.  
\end{lemma}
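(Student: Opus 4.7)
The plan is to unwind the definition of $p_{A_\ell}(x)_i$ directly. By construction, $z_i$ is represented in homogeneous coordinates by
\[
  \pi_{A_\ell}(x)_i = x_i \prod_{S} g_S(x_S),
\]
where the product runs over all $S\in\scrC$ with $i\in S$ but $A_\ell\not\subseteq S$. Every factor is non-negative, so $z_i=0$ if and only if $x_i=0$ or some $g_S(x_S)$ vanishes.

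Next I would identify which such equalities actually hold on the interior of $Q$. By Proposition~\ref{faces}, $Q$ is cut out from $\J$ by the equations $x_{S_j}=c_{S_j}$ for $j=1,\ldots,k$, and on its interior these are the only tight constraints coming from $\mathscr S$. Thus $x_i=0$ (equivalent to $x_{\{i\}}=c_{\{i\}}$ in case $\{i\}\in\mathscr F$, the only situation where $c_{\{i\}}=0$) forces $\{i\}$ to be one of $S_1,\ldots,S_t$, i.e.\ $i\in U_t$; singletons $\{i\}\in\scrC$ satisfy $x_i\geq c_{\{i\}}>0$ everywhere on $\J$. Likewise, for $S\in\scrC$, $g_S(x_S)=0$ occurs precisely when $S\in\{S_{t+1},\ldots,S_k\}$.

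Now invoke the chain structure. Because $S_{t+1}\subset S_{t+2}\subset\cdots\subset S_k$ is strictly increasing and $A_\ell=S_{k+1-\ell}$, the condition $A_\ell\not\subseteq S_j$ for $j>t$ is equivalent to $j<k+1-\ell$, i.e.\ $j\leq k-\ell$. Combining this with the previous paragraph, $z_i=0$ if and only if $i\in S_j$ for some $1\leq j\leq k-\ell$, which is to say $i\in U_{k-\ell}$.

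Finally, since $i$ ranges over $A_\ell$ (the index set of $\pi_{A_\ell}$), I would close the argument with a small set-theoretic computation: $U_{k+1-\ell}=U_{k-\ell}\cup S_{k+1-\ell}=U_{k-\ell}\cup A_\ell$, so
\[
  V_\ell = U_{k+1-\ell}-U_{k-\ell} = A_\ell - U_{k-\ell}.
\]
Hence for $i\in A_\ell$, $z_i\neq 0 \iff i\notin U_{k-\ell} \iff i\in V_\ell$, as claimed. The only real bookkeeping care is distinguishing singletons in $\mathscr F$ from those in $\scrC$ (only the former can produce $x_i=0$) and navigating the reversed indexing $A_\ell=S_{k+1-\ell}$; neither poses a substantive obstacle.
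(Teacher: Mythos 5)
Your proposal is correct and follows essentially the same route as the paper: expand $\pi_{A_\ell}(x)_i=x_i\prod g_S(x_S)$, use that on the interior of $Q$ the only tight constraints are $x_{S_j}=c_{S_j}$, and use the chain structure to see that the vanishing factors occur exactly for $i\in U_{k-\ell}=U_t\cup A_{\ell+1}$, i.e.\ $z_i\neq 0$ precisely on $V_\ell=A_\ell-U_{k-\ell}$. The paper organizes this as two separate cases ($i\in A_\ell-V_\ell$ and $i\in V_\ell$) rather than your single ``iff'' computation, but the content is the same.
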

 \begin{proof} 
 If $i\in A_\ell-V_\ell$ then either $i\in U_t$ or $i\in A_{\ell+1}\subset A_\ell$ because $A_\ell-V_\ell \subseteq U_{k-\ell} = U_t \cup A_{\ell+1}$. For $i\in U_t$, we have $x_i=0$. For $i\in A_{\ell+1}$, the factor $g_S(x_S)=0$ with $S=A_{\ell+1}$ contributes to $z_i$. In either case, $z_i=0$.
 
Suppose $i\in V_\ell$.  Since $\vx$ is in the interior of $Q$ all  $g_S(x_S)$  are non-zero except when $S = S_j$ for some $j$.     
The expression for $z_i$ does not use any singletons in $U_t$ or any $A_j$ for $j>\ell$ since those $A_j$ do not contain $i$.  If $j\leq \ell$ then $A_j$ contains $A_\ell$ so again is not used in the expression for $z_i$.  Therefore $z_i\ne 0$.  
 \end{proof}

\begin{proposition}\label{local}  If $x$ and $x'$ are in different open faces of $\J$ then $p_{\scrC}(x)\neq p_{\scrC}(x').$
 \end{proposition}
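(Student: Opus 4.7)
The plan is to show that the face $Q$ containing $x$ can be reconstructed from $p_\scrC(x)$: since $p_\scrC(x)=p_\scrC(x')$ forces the zero-coordinate set $Z_Q(A)\subseteq A$ of $p_A(x)$ to coincide with the analogous set $Z_{Q'}(A)$ for $x'$ at every $A\in\scrC$, this will give $Q=Q'$ and hence the proposition. The central computation is essentially Lemma~\ref{Qint}, which already describes $Z_Q(A)$ for every $A$ in the core chain of $Q$. Running the argument of that lemma with $A_0=\Delta$ (the same proof goes through) yields $Z_Q(\Delta)=U_k$, and for $A=S_j$ it gives $Z_Q(S_j)=(S_j\cap U_t)\cup S_{j-1}$, with the convention $S_t:=\emptyset$.

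I then recover the face data iteratively. From $Z_Q(\Delta)$ I read off $U_k$, and since $U_t$ is a forest, the core of $U_k=U_t\cup S_k$ is $S_k$, pinning down the top of the chain. Given $S_j$, monotonicity of core under set inclusion (an immediate consequence of the fact that the union of two core subgraphs is core, as noted before Proposition~\ref{faces}) gives
\[
  S_{j-1} \subseteq \core\bigl((S_j\cap U_t)\cup S_{j-1}\bigr) \subseteq \core(U_{j-1}) = S_{j-1},
\]
so $S_{j-1}=\core(Z_Q(S_j))$ is recovered. I iterate until the recovered core becomes empty, which happens exactly at $j=t+1$ (since $Z_Q(S_{t+1})=S_{t+1}\cap U_t$ lies inside the forest $U_t$), and this simultaneously identifies $t$. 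Finally, the forest $U_t$ itself is assembled via
\[
  U_t = (U_k\setminus S_k) \;\cup\; \bigsqcup_{j=t+1}^{k} \bigl(Z_Q(S_j)\setminus S_{j-1}\bigr),
\]
the first piece equalling $U_t\setminus S_k$ and the disjoint union capturing $U_t\cap S_k$ because the layers $S_j\setminus S_{j-1}$ partition $S_k$. Comparing these reconstructions for $Q$ and $Q'$ then forces every piece of the data $(U_t,S_{t+1},\dots,S_k)$ to agree, hence $Q=Q'$.

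The main technical obstacle is the possible overlap between $U_t$ and $S_k$: a non-loop edge in the forest $U_t$ can lie inside a core subgraph of $G$ (for instance, every edge of a theta graph is non-loop and in the core). This rules out the naive identification $U_t=U_k\setminus S_k$ and forces the telescoping recovery at the end. Monotonicity of core is the algebraic tool that keeps the peeling iteration honest: it guarantees $\core(Z_Q(S_j))=S_{j-1}$ even when $S_j\cap U_t$ is a non-trivial forest sitting inside $S_j$.
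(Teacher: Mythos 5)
Your proof is correct and takes essentially the same approach as the paper: both reconstruct the face data $(U_t, S_{t+1},\ldots,S_k)$ of Proposition~\ref{faces} from the zero/nonzero coordinate pattern of the maps $p_A$, as given by Lemma~\ref{Qint}. The paper's iteration peels off the sets $V_\ell$ one by one so that $U_{k-\ell}$ and its core $S_{k-\ell}$ are produced at each step and $U_t$ emerges as the bottom of the chain; your variant recovers the cores directly via $S_{j-1}=\core(Z_Q(S_j))$ (justified by monotonicity of core) and reassembles $U_t$ with a telescoping union, which is a minor bookkeeping difference.
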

  
  \begin{proof} By Lemma~\ref{Qint} if $x$ is in the interior of $Q$ then $p_\Delta(x)$ determines $V_0=\Delta - U_k$ and therefore determines $U_k$ and $\core(U_k)=S_k=A_1$.  The map $p_{A_1}$ then determines $V_1=U_k-U_{k-1},$ hence $U_{k-1}$ and   $\core(U_{k-1})=S_{k-1}=A_2$. Continuing, we see that the maps $p_{A_i}$   determine all $A_i$  and $U_t$, so determine $Q$.  
 \end{proof}

  We now concentrate on a single face $Q$. 
  
 \begin{proposition}\label{image} Let $\mathscr Q\subseteq \mathscr C$ be the set $\{\Delta,A_1,\ldots,A_r\}$ of core graphs associated to $Q$, and  $p_{\mathscr Q}$ the product map $$p_{\mathscr Q}=p_{\Delta}\times p_{A_{1}}\times\ldots\times p_{A_{r}}\colon Q\to \RP^\Delta_{\geq 0}\times \RP^{A_{1}}_{\geq 0}\times\ldots\times \RP^{A_{r}}_{\geq 0}.$$
Then the image of $Q$ is contained in $\RP^{V_0}_{\geq 0}\times \RP^{V_{1}}_{\geq 0}\times\ldots\times \RP^{V_{r}}_{\geq 0}.$  The boundary of $Q$ maps to the boundary of $\RP^{V_0}_{\geq 0}\times \RP^{V_{1}}_{\geq 0}\times\ldots\times \RP^{V_{r}}_{\geq 0}.$
\end{proposition}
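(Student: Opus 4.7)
The plan is to deduce the containment from Lemma~\ref{Qint} and to handle the boundary assertion by a case analysis on the extra tight equation satisfied at each boundary point of $Q$. For the containment, the argument of Lemma~\ref{Qint} in fact shows that for every $\ell\in\{0,1,\ldots,r\}$ and every $i\in A_\ell - V_\ell$, the coordinate $(p_{A_\ell}(x))_i$ has a factor that vanishes on all of $Q$, not only on its interior: either $i\in U_t$, making $x_i=0$ a defining equation of $Q$, or $i\in A_{\ell+1}$, making $g_{A_{\ell+1}}(x_{A_{\ell+1}})=0$ via the defining equation $x_{A_{\ell+1}}=c_{A_{\ell+1}}$ (with the convention $A_{r+1}=\emptyset$, the second case is vacuous when $\ell=r$). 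This immediately yields $p_{A_\ell}(x)\in\RP^{V_\ell}_{\geq 0}$ for every $x\in Q$.

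For the boundary claim, any $x\in\partial Q$ lies in a proper face of $Q$ and thus satisfies an additional equation $x_T=c_T$ for some $T\in\mathscr S$ outside the defining collection of $Q$. If $T=\{j\}$ is a new singleton in $\mathscr F$, then $x_j=0$; a short telescoping check gives $V_0\cup\cdots\cup V_\ell=\Delta-U_t-A_{\ell+1}$, so in particular $V_0\cup\cdots\cup V_r=\Delta-U_t$ and some $\ell$ satisfies $j\in V_\ell$, whence $(p_{A_\ell}(x))_j=0$ puts the image on the boundary of the $\ell$th factor. If instead $T$ is a new proper core subset, the requirement that $Q\cap\{x_T=c_T\}$ be a non-empty face forces $T$ to slot into the chain $\Delta=A_0\supsetneq A_1\supsetneq\cdots\supsetneq A_r\supsetneq A_{r+1}=\emptyset$, so $A_{s+1}\subsetneq T\subsetneq A_s$ for a unique $s$. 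The goal is then to produce $\ell\leq s$ and an edge $e\in T\cap V_\ell$; for any such pair, $A_\ell\supseteq A_s\supsetneq T$ guarantees the factor $g_T(x_T)=0$ appears in the expression for $(p_{A_\ell}(x))_e$, so that coordinate vanishes.

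The main obstacle is the ``bad'' scenario where every edge of $T-A_{s+1}$ lies in $U_t$, i.e.\ $T\subseteq U_t\cup A_{s+1}$. I plan to rule this out using the chain structure of $Q$: since $S_{t+1}\subsetneq\cdots\subsetneq S_k$, one has $U_{k-s}=U_t\cup A_{s+1}$, and the defining property $S_{k-s}=\core(U_{k-s})$ from Proposition~\ref{faces} reads $\core(U_t\cup A_{s+1})=A_{s+1}$. Since $T$ is itself a core subgraph contained in $U_t\cup A_{s+1}$, maximality of the core forces $T\subseteq A_{s+1}$, contradicting $T\supsetneq A_{s+1}$; when $A_{s+1}=\emptyset$ (the case $s=r$), the contradiction is even more direct because $U_t$ is a forest while $T$ is a non-empty core. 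Therefore some $e\in T-A_{s+1}$ lies in $\Delta-U_t=\bigcup_\ell V_\ell$, so $e\in V_\ell$ for some $\ell$; the possibility $\ell\geq s+1$ would put $e$ in $V_\ell\subseteq A_\ell\subseteq A_{s+1}$, which is excluded, forcing $\ell\leq s$ as required.
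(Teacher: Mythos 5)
Your proposal is correct, but it takes a genuinely different route from the paper's, particularly for the boundary assertion. The paper obtains the containment by applying Lemma~\ref{Qint} on the interior of $Q$ and passing to the closure by continuity, and it handles the boundary indirectly: a boundary point lies in a different open face, and by Proposition~\ref{local} the vanishing pattern of the coordinates determines the face, so some coordinate indexed by the $V_i$ must vanish. You instead note that the vanishing half of Lemma~\ref{Qint} uses only the defining equations of $Q$ and therefore holds on all of $Q$, and for the boundary you run an explicit case analysis on the extra tight constraint $x_T=c_T$ at a boundary point, exhibiting a concrete coordinate in some $V_\ell$ that is forced to vanish. Your version is more hands-on and gives finer information (it names the dying coordinate), at the cost of redoing combinatorics that the paper has packaged into Proposition~\ref{local}; the paper's proof is shorter but less explicit. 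One step you should spell out: the claim that a new core constraint $T$ is comparable with every $A_i$ does not follow verbatim from the statement of Proposition~\ref{faces}, which only says each face is \emph{given by} some chain-structured list of equations. Either argue directly that two incomparable tight core constraints at a point of $J(G)$ are impossible, since then $x_{T\cup A_i}\leq c_T+c_{A_i}<c_{T\cup A_i}$ while $J(G)$ requires $x_{T\cup A_i}\geq c_{T\cup A_i}$ (the argument of Proposition~\ref{prop:rosefaces}), or combine Proposition~\ref{faces} with Remark~\ref{exactly-m-hyperplanes-in-any-vertex} (the jewel is a simple polytope, so the active set of the face $Q\cap\{x_T=c_T\}$ is exactly a chain-structured list containing $T$ and all the $S_j$). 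With that line added, the rest of your reduction --- ruling out $T\subseteq U_t\cup A_{s+1}$ via $\core(U_{k-s})=A_{s+1}$, and locating $e\in T\cap V_\ell$ with $\ell\leq s$ so that the factor $g_T(x_T)=0$ kills $(p_{A_\ell}(x))_e$ --- is sound.
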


\begin{proof}  It follows immediately from   Lemma~\ref{Qint} that the interior of $Q$ maps to   $\RP^{V_0}_{>0}\times \RP^{V_{1}}_{>0}\times\ldots\times \RP^{V_{r}}_{> 0}$.  Since $p_{\mathscr Q}$ is continuous, all of $Q$ maps to the closure  $\RP^{V_0}_{\geq 0}\times \RP^{V_{1}}_{\geq 0}\times\ldots\times \RP^{V_{r}}_{\geq 0}$.   If $x$ is on the boundary of $Q$ then it is in a different open face $Q'$.  By Proposition~\ref{local} the sets $V'_i$ on which $p_{Q'}$ is non-zero determine this face,  so at least one more coordinate in the sets $V_i$ which determine $Q$ must be zero. 
 \end{proof}
 
 The proof of the following proposition relies on the explicit formula for the function $p_{\mathscr Q}$.

\begin{proposition}\label{localDiffeo}  The map $p_{\mathscr Q}\colon Q\to \RP^{V_0}_{\geq 0}\times \RP^{V_{1}}_{\geq 0}\times\ldots\times \RP^{V_{r}}_{\geq 0}$   is a local diffeomorphism on the interior of $Q$,
\end{proposition}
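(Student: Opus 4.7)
The plan is to compute the differential $dp_{\mathscr Q}$ at a point $x_0 \in \operatorname{int}(Q)$ and show it is injective; since the domain and target both have dimension $m-k$, this will give the local diffeomorphism.

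First, I would reduce to the case $t=0$ by applying Proposition~\ref{prop:commutes} to the forest collapse $\kappa_{U_t}\colon G \to G/U_t$. After this reduction, $Q$ is a face of $J(G/U_t)$ cut out by the equations $x_{A_\ell} = c_{A_\ell}$ for a strictly nested chain $\Delta = A_0 \supsetneq A_1 \supsetneq \cdots \supsetneq A_r \supsetneq A_{r+1} = \emptyset$ of core subgraphs, and $V_\ell = A_\ell \setminus A_{\ell+1}$. Tangent vectors at $x_0$ are those $v \in \mathbb{R}^\Delta$ with $v_{V_\ell} := \sum_{i \in V_\ell} v_i = 0$ for every $\ell$. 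By Lemma~\ref{Qint} and the nesting of the $A_j$, for each $i \in V_\ell$ the product defining $\pi_{A_\ell}(x)_i$ contains no vanishing factor $g_{A_j}(x_{A_j})$, so $\pi_{A_\ell}(x)_i = x_i\, G_{\ell,i}(x)$ with $G_{\ell,i}(x_0) > 0$ and smooth near $x_0$.

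Next, I would differentiate logarithmically. Writing $L_{\ell,i}(x) = \log \pi_{A_\ell}(x)_i$, a direct computation gives
\[
dL_{\ell,i}(v) \;=\; \frac{v_i}{x_i^0} \;+\; \sum_{\substack{S \in \scrC,\, i \in S \\ A_\ell \not\subseteq S}} h_S(x_0)\, v_S,
\]
where $h_S(x_0) = g'_S(x_S^0)/g_S(x_S^0) \geq 0$ is finite and $v_S = \sum_{k \in S} v_k$. The condition $dp_{\mathscr Q}(v) = 0$ is exactly that $dL_{\ell,i}(v)$ is independent of $i \in V_\ell$ for each $\ell$; call the common value $\lambda_\ell$. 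Writing $v_i = x_i^0(\lambda_\ell - T_{\ell,i}(v))$ (where $T_{\ell,i}$ denotes the sum above), summing over $i \in V_\ell$, and using $v_{V_\ell} = 0$ expresses $\lambda_\ell$ as a weighted average of the $T_{\ell,i}(v)$, leaving an explicit linear self-consistency equation $v = \Psi(v)$ on $T_{x_0}Q$.

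To conclude $v = 0$, I would exploit the nesting of the $A_\ell$. Every $S$ appearing in $T_{\ell,i}$ satisfies $A_\ell \not\subseteq S$; when $v_S$ is expanded as $\sum_{\ell'} v_{S \cap V_{\ell'}}$, any index $\ell'$ with $V_{\ell'} \subseteq S$ contributes $v_{V_{\ell'}} = 0$ and drops out. The remaining coupling runs through partial intersections $S \cap V_{\ell'}$; processing the $V_\ell$ inductively in an appropriate order, together with the identity-like diagonal blocks contributed by the $v_i/x_i^0$ terms, should force $v|_{V_\ell} = 0$ one level at a time.

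The main obstacle I anticipate is making this last step fully rigorous, because $\scrC$ may contain core subgraphs that interleave several $V_\ell$'s without respecting the nesting of the $A_j$, so the coupling is not obviously triangular. If clean triangularity cannot be extracted, a fallback is to argue by continuity: when all $h_S$ are set to zero the system reduces to $v_i = \lambda_\ell x_i^0$, which together with $v_{V_\ell} = 0$ forces $v = 0$ immediately; a positivity/diagonal-dominance estimate using $h_S \geq 0$ and the tangency constraints should then preserve injectivity of $\operatorname{Id} - \Psi$ under the non-negative perturbation.
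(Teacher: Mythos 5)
Your overall strategy --- compute the differential and show it is injective because domain and target have the same dimension --- matches the paper's. You correctly identify the kernel condition (the logarithmic derivative $dL_{\ell,i}(v)$ must be constant in $i \in V_\ell$ for each $\ell$, since the projective quotient kills the constant part), and the reduction to $t=0$ via Proposition~\ref{prop:commutes} is legitimate and convenient. The formula you write for $dL_{\ell,i}(v)$ is correct.

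The gap is precisely the step you flag as your ``main obstacle.'' The inductive triangularity you outline does not go through: the core subgraphs $S \in \scrC$ appearing in $T_{\ell,i}$ need not be $\subseteq$-comparable with the $A_j$, so the coupling matrix has no upper- or lower-triangular block structure with respect to the $V_\ell$'s, and the observation that $v_{V_{\ell'}}=0$ knocks out the $S_j$ but not the other core subgraphs. Your fallback --- ``injectivity of $\operatorname{Id}-\Psi$ is preserved under the non-negative perturbation'' --- is not true as a generic statement: the coefficients $h_S = g'_S/g_S$ are not small anywhere near the shaved faces, so there is no smallness-based perturbation argument, and non-negativity of a matrix's entries does not by itself preserve invertibility of $\operatorname{Id}-\Psi$. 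Some additional algebraic structure is required to make the positivity precise, and you have not supplied it.

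The paper supplies it as follows. Working in the affine charts $Z_i = z_i/z_\ell$ ($i \in V_\ell$) and rewriting each chart coordinate as $Z_i = \prod_{S}\tilde g_S(x_S)^{\langle e_{i\ell}, e_S\rangle}$ (absorbing the bare $x_i$ into modified functions $\tilde g_S$), one pairs the image $Dv$ with the specific test vector $w$ with $w_j = v_j/Z_j$ and obtains the exact identity
\[
  \langle Dv, w\rangle
    \;=\;
  \sum_{S\neq S_1,\ldots,S_k}
    \frac{\tilde g_S'(x_S)}{\tilde g_S(x_S)}
    \Bigl(\sum_i v_i\,\langle e_{i\ell}, e_S\rangle\Bigr)^{2}.
\]
This is a manifest sum of non-negative terms, and for singleton $S\in\mathscr F$ the coefficient $\tilde g'_S/\tilde g_S$ is strictly positive; since the $e_S$ for singletons are the standard basis and not all orthogonal to $TQ$, some square is nonzero unless $v=0$. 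This sum-of-squares identity is the replacement for your ``diagonal dominance'' heuristic, and it is what the proof actually needs; without it or a comparable algebraic mechanism, your argument does not close.
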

\begin{proof} 
     Relabeling edges if necessary, we may assume   $0\in V_0$ and $\ell\in V_{\ell}\subseteq A_\ell=S_{k+1-\ell}$  for $\ell=1,\ldots,r=k-t$. 
 
If $i\in V_\ell$ set $z_{i}= p_{A_\ell}(x)_i$, which we may do since $V_\ell\subset A_\ell$. By Lemma~\ref{Qint}, if $x$ is in the interior of $Q$, then all such $z_{i}$ are non-zero; in particular $z_\ell\neq 0$.  Thus  the image of $Q$ is contained in the affine charts $Z_{i}=\frac{z_{i}}{z_\ell}$ on $\RP^{V_\ell}$.

We've defined $z_i=x_i\prod g_S(x_S)$ where the product is over all $S\in\scrC$ which contain $i$ but not all of $A_\ell$. Here, the isolated factor $x_i$ introduces a complication in the formula for the derivative. We can resolve this by defining a new function $\tilde g_S$ for any $S\in\mathscr S$ (as opposed to $S\in \scrC$) by 
  \[
  \tilde g_S(t) =\begin{cases} g_S(t) &\mbox{if $S$ is not a singleton}  \\
                                               t  & \mbox{if } S=\{i\}\in\mathscr F\\
                                               tg_{\{i\}}(t) & \mbox{if $S=\{i\}$ is a loop} 
                                                \end{cases}.
 \]     
We can now rewrite the formula for $z_i$ as
  \[
    z_i = \prod_{\{S\in\mathscr S|i\in S, A_\ell\not\subseteq S\}} \tilde g_S(x_S).
  \]

Let $e_{i\jmath} = e_i - e_j.$  
For $i\in V_\ell$ we now have  $$Z_i=\frac{z_i}{z_\ell}=\frac{\prod_{i\in S, A_\ell\not\subseteq S}{\tilde g}_S(x_S)}{\prod_{\ell\in S, A_\ell\not\subseteq S}{\tilde g}_S(x_S)}=\prod_{S\in\mathscr S} {\tilde g}_S(x_S)^{\langle e_{i\ell},e_S\rangle}$$ 
Note that  $\langle e_{i\ell},e_{S_j}\rangle=0$ for all $j=1,\ldots,k$ since each $S_j$ either contains $A_\ell$ or doesn't contain any element of $V_\ell,$ so it suffices to take the product over all $S\neq S_1,\ldots,S_k$. This is to say, $Z_i$ does not depend on $x_{S}$ for $S\in\{S_1,\ldots,S_k\}$, whence the partial derivative $\frac{\partial Z_i}{\partial x_S}$ vanishes.

The $m-k$  vectors $\{e_{i\ell}\,|\,i\in V_{\ell},\, i\neq \ell\}$  are linearly independent, span an $m-k$ dimensional subspace of $\R^{m+1}$ and are perpendicular to $e_{S_j}$ for all $j=1,\ldots,k$ and to $(1,\ldots,1)$, so give a basis for the tangent space $TQ$ to $Q$.   Let $m_i$ be the coordinates on $TQ$ determined by the vectors $e_{i\ell}$, i.e. $m_i(e_{j\ell})=\delta_{ij}$.  Note that $j$ determines $\ell$ via $j\in V_\ell$. 

We want to show that the derivative $D=Dp_\mathscr Q$ is non-singular.  Since the domain and range both have dimension $m-k$, it suffices to show the kernel of $D$ is zero.  For $v=(v_1,\ldots,v_{m-k})\in TQ$, set $w=(\frac{v_1}{Z_1},\ldots, \frac{v_{m-k}}{Z_{m-k}})$; then to show $\ker(D)=0$ it suffices to show $\langle Dv,w\rangle\neq 0$ for all $v$.  
We have  
\[
  \frac{\partial Z_j}{\partial m_{i}}   =
  \sum_{S\neq S_1,\ldots,S_k}\frac{\partial Z_{j}}{\partial x_S}
                 \frac{\partial x_S}{\partial m_{i}}
\]
 So
\[
  \langle Dv,w\rangle
    =\sum_{i,j}w_j\frac{\partial Z_j}{\partial m_{i}}v_i
    = \sum_{i,j}\sum_{S\neq S_1,\ldots,S_k} w_j
        \left(\frac{\partial Z_j}{\partial x_S}\right)
        \left(\frac{\partial x_S}{\partial m_i}\right)v_i
\]
Since we are only summing over $S$ with $x_S$ nonzero,
\[
  \frac{\partial Z_{j}}{\partial x_S}=\frac{{\tilde g}_S'(x_S)}{{\tilde g}_S(x_S)} \langle e_{j\ell},e_S\rangle Z_j
\]
so 
\begin{align*}
\sum_{i,j}\sum_{S\neq S_1,\ldots,S_k} w_j \left(\frac{\partial Z_j}{\partial x_S}\right)\left(\frac{\partial x_S}{\partial m_i}\right) v_i
&=\sum_{S\neq S_1,\ldots,S_k}\sum_{i,j} w_j  \frac{{\tilde g}_S'(x_S)}{{\tilde g}_S(x_S)}  \langle e_{j\ell},e_S\rangle Z_j   \langle e_{i\ell},e_S\rangle v_i\\
&= \sum_{S\neq S_1,\ldots,S_k}\sum_{i,j} \frac{v_j}{Z_j}  \frac{{\tilde g}_S'(x_S)}{{\tilde g}_S(x_S)} \langle e_{j\ell},e_S\rangle Z_j   \langle e_{i\ell},e_S\rangle v_i\\
&=\sum_{S\neq S_1,\ldots,S_k}\frac{{\tilde g}_S'(x_S)}{{\tilde g}_S(x_S)} (\sum_{i}\langle v_ie_{i\ell},e_S\rangle)^2.
\end{align*}
All summands in the final expression are nonnegative. To conclude that $\langle Dv,w\rangle\neq 0$, we have to argue that
  there is some strictly positive contribution. For singletons, $g'_S$ is strictly positive. Moreover, there must be some singleton $S$ for which $\langle v_ie_{\ell},e_S\rangle$   is not zero, since such $e_S$ are the standard basis for $\R^{m+1}$.  This $S$ cannot be one of the $S_1,\ldots, S_k$ since these are all orthogonal to $TQ$.
\end{proof}

\begin{proposition}\label{Cdiffeo}  $p_{\mathscr Q}$ restricted to the interior of $Q$ 
is a diffeomorphism onto $\RP^{V_0}_{> 0}\times \RP^{V_{1}}_{>0}\times \ldots\times \RP^{V_{r}}_{> 0}.$ 
\end{proposition}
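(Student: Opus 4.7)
The plan is to combine Propositions~\ref{image} and~\ref{localDiffeo} with a properness argument to promote the local diffeomorphism to a global one via a standard covering-space argument. Observe first that the target $\RP^{V_0}_{>0}\times\cdots\times \RP^{V_r}_{>0}$ is, after normalizing coordinates to sum to $1$, a product of open standard simplices, hence contractible. Its dimension is $\sum_i (|V_i|-1) = (m+1-t)-(r+1) = m-k$, matching the dimension of $Q$ computed in the proof of Proposition~\ref{localDiffeo}. So both sides are smooth manifolds of the same dimension.

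First I would show that $p_\mathscr Q|_{\operatorname{int}(Q)}$ is a proper map onto $\RP^{V_0}_{>0}\times\cdots\times \RP^{V_r}_{>0}$. Lemma~\ref{Qint} combined with Proposition~\ref{image} yields the key identity
\[
  p_\mathscr Q^{-1}\bigl(\RP^{V_0}_{>0}\times\cdots\times \RP^{V_r}_{>0}\bigr) \cap Q \;=\; \operatorname{int}(Q),
\]
since the interior maps into the open part while the boundary maps into its complement. Given a compact set $K$ in the open part, its preimage in $Q$ is closed (hence compact, since $Q$ is compact) and disjoint from $\partial Q$, so it is a compact subset of $\operatorname{int}(Q)$; this is the defining property of properness for the restriction.

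By Proposition~\ref{localDiffeo} this proper map is a local diffeomorphism between manifolds of the same dimension. A proper local homeomorphism onto a locally compact Hausdorff space is a finite covering map, so $p_\mathscr Q|_{\operatorname{int}(Q)}$ is a finite cover. Because the target is simply connected (even contractible) and $\operatorname{int}(Q)$ is connected, the cover has degree one, i.e.\ it is bijective and in particular surjective. Combined with the local diffeomorphism property, this gives that $p_\mathscr Q|_{\operatorname{int}(Q)}$ is a diffeomorphism onto the entire positive part.

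The main obstacle is the properness step, and in particular the sharp boundary-to-boundary assertion it requires: not only must $\partial Q$ map into the boundary of the target, but no boundary point of $Q$ may map into the positive part. This is exactly what Lemma~\ref{Qint} together with the face-identification result of Proposition~\ref{local} provide — different open faces of $Q$ are distinguished by the vanishing pattern of the coordinates of $p_\mathscr Q$. Once properness is established, everything else is formal covering-space theory.
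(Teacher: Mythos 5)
Your argument is correct and follows essentially the same route as the paper: properness of $p_{\mathscr Q}|_{\operatorname{int}(Q)}$ via the boundary-to-boundary statement of Proposition~\ref{image}, combined with the local diffeomorphism of Proposition~\ref{localDiffeo}, yields a covering map onto the simply connected positive part, hence a diffeomorphism. The added dimension count and the explicit degree-one bookkeeping are harmless elaborations of what the paper leaves implicit.
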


\begin{proof} 
Suppose $K\subset \RP^{V_0}_{> 0}\times \RP^{V_{1}}_{> 0}\times\ldots\times \RP^{V_r}_{> 0}$ is compact. Then $p^{-1}(K)$ is compact in the compact domain $Q$. 
 By Proposition~\ref{image}, $p_{\mathscr Q}$ sends points in $\bdry Q$ to the boundary of $\RP^{V_0}_{\geq 0}\times \RP^{V_{1}}_{\geq 0}\times\ldots\times \RP^{V_r}_{\geq 0},$ so $p_{\mathscr Q}^{-1}(K)$  is actually contained in the interior of $Q$.  Thus $p_{\mathscr Q}$ restricted to the interior of $Q$ is a proper map which is a local diffeomorphism, so it is a covering map.  But $\RP^{V_0}_{> 0}\times \RP^{V_{1}}_{> 0}\times\ldots\times \RP^{V_r}_{> 0}$ is simply-connected, so $p_{\mathscr Q}$ restricted to the interior of $Q$ is a diffeomorphism.   
\end{proof}

Both the domain $\J$ and the codomain $\Prod$ of $p_{\scrC}$ are naturally stratified by the partially ordered set (poset) of their open faces, and the stratification of the codomain induces a stratification of the image.  

\begin{theorem}\label{homeo-on-cells}
  The map $p_{\scrC}$  is a stratum-preserving homeomorphism onto its image which restricts to a diffeomorphism on each stratum.
\end{theorem}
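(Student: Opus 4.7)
The strategy is to assemble the stratum-by-stratum analyses of Propositions~\ref{local}, \ref{image}, \ref{localDiffeo}, and \ref{Cdiffeo} into a single global statement. For each open face $Q$ of $J(G)$ we have the chain of core subgraphs $\mathscr Q = \{\Delta, A_1, \ldots, A_r\}$ and the partial product map $p_{\mathscr Q}$, which is obtained from $p_{\scrC}$ by dropping all factors not indexed by $\mathscr Q$.

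First I would establish global injectivity of $p_{\scrC}$. For two points lying in different open faces this is exactly Proposition~\ref{local}. For two points in a common open face $\mathring Q$, Proposition~\ref{Cdiffeo} tells us that $p_{\mathscr Q}|_{\mathring Q}$ is a diffeomorphism and hence injective; since $p_{\mathscr Q}$ is the composition of $p_{\scrC}$ with a coordinate projection, $p_{\scrC}$ is injective on $\mathring Q$ as well. Because $J(G)$ is compact and $\Prod$ is Hausdorff, continuity plus injectivity of $p_{\scrC}$ automatically upgrades to the statement that $p_{\scrC}$ is a homeomorphism onto its image.

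Stratum-preservation is then immediate from Proposition~\ref{image}: the interior $\mathring Q$ maps into the open positive part $\RP^{V_0}_{>0}\times\cdots\times\RP^{V_r}_{>0}$ of the corresponding subproduct inside $\Prod$, and the boundary of $Q$ maps into the ambient boundary. This identifies the stratification of the image (inherited from the product stratification of $\Prod$) with the stratification of $J(G)$ by open faces. For the diffeomorphism assertion on each stratum, Proposition~\ref{Cdiffeo} already identifies $\mathring Q$ with $\RP^{V_0}_{>0}\times\cdots\times \RP^{V_r}_{>0}$ via $p_{\mathscr Q}$; the remaining factors $p_A$ for $A\in\scrC\setminus\mathscr Q$ are smooth on $\mathring Q$ by their explicit defining formulas, so $p_{\scrC}|_{\mathring Q}$ is a smooth map whose composition with projection onto the $\mathscr Q$-coordinates is already a diffeomorphism, forcing $p_{\scrC}|_{\mathring Q}$ itself to be a diffeomorphism onto its image.

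The main obstacle I foresee is bookkeeping the behavior of the ``extra'' factors $p_A$ for $A \notin \mathscr Q$: one must confirm that they do not contribute new constraints beyond what $p_{\mathscr Q}$ already records, and that they vary smoothly with the $\mathscr Q$-coordinates on each stratum. This is essentially handled by Lemma~\ref{Qint} together with the face-compatibility established in Proposition~\ref{prop:commutes}: the former pins down exactly which coordinates of each $p_A(x)$ are forced to vanish on $\mathring Q$, while the latter guarantees that the image is cut out cleanly along face inclusions. Once this compatibility is in hand, the remainder of the argument is a formal packaging of the propositions above.
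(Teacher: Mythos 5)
Most of your outline coincides with the paper's own proof: injectivity for points in different open faces is Proposition~\ref{local}; injectivity within an open face $\mathring Q$ follows, exactly as you say, because $p_{\mathscr Q}$ is a coordinate projection of $p_{\scrC}$ and is injective on $\mathring Q$ by Proposition~\ref{Cdiffeo}; compactness of $J(G)$ and Hausdorffness of $\Prod$ then give a homeomorphism onto the image; and your argument for the diffeomorphism on each stratum (the factors $p_A$ with $A\notin\mathscr Q$ are smooth on $Q$ because $\pi_A$ never vanishes there by Lemma~\ref{nonzero}, and the projection to the $\mathscr Q$-coordinates is already a diffeomorphism) is the same formal packaging the paper relies on.

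The gap is in stratum-preservation. It is not ``immediate from Proposition~\ref{image}'': that proposition, like Lemma~\ref{Qint}, only controls the factors $p_A$ with $A$ in the chain $\mathscr Q=\{\Delta,A_1,\ldots,A_r\}$ attached to the particular face $Q$, whereas the stratification of $\Prod$ has one simplex factor for \emph{every} core subgraph $A\in\scrC$. To place $p_{\scrC}(\mathring Q)$ in a single stratum you must show that for every $A\in\scrC$, including $A\notin\mathscr Q$, the set of coordinates of $p_A(x)$ that vanish is the same for all $x\in\mathring Q$; Lemma~\ref{Qint} does not address such $A$, and Proposition~\ref{prop:commutes} cannot substitute, since it concerns compatibility under forest collapses (faces of the form $J(G')\subseteq J(G)$) and says nothing about the truncation faces $x_S=c_S$ with $S$ core. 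The missing observation, which is how the paper argues, is elementary: each homogeneous coordinate $\pi_A(x)_i=x_i\prod g_S(x_S)$ is a monomial in the quantities $x_i$ and $g_S(x_S)$, each of which vanishes on $J(G)$ precisely when the corresponding defining inequality $x_S\geq c_S$, $S\in\mathscr S$, becomes an equality (non-loop singletons have $c_{\{i\}}=0$, and $x_i$ is bounded below by $c_{\{i\}}>0$ when $i$ is a loop); since an open face $\mathring Q$ is characterized exactly by which of these equalities hold, the vanishing pattern of all coordinates of all $p_A$ is constant on $\mathring Q$, for every $A\in\scrC$. With that inserted in place of your appeal to Lemma~\ref{Qint} and Proposition~\ref{prop:commutes}, the rest of your argument goes through and agrees with the paper.
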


\begin{proof} The map $p_{\scrC}$ is stratified if each $p_A\colon P\to \RP^A_{\geq 0}$ is stratified. The strata of the simplex $\RP^A_{\geq 0}$ are defined by the set of coordinates $z_i$ which vanish on them. An open face $Q$ of $P$ is determined by  the set of coordinates $x_S$ which are non-zero on $Q$. As the $z_i$ are given by monomials in the $x_S$, the set of  $z_i$ which vanish is the same for all $x\in Q$, i.e. $p_A$ maps strata to strata.

Propositions \ref{local} and \ref{Cdiffeo} show that  $p_{\scrC}$ is injective, and the restriction to each stratum is a diffeomorphism onto its image.  Since $p_{\scrC}$ is an injective continuous map from a compact space to a Hausdorff space, it is a homeomorphism onto its image.
\end{proof}

Proposition~\ref{local} could be rephrased to say that the map $p_\scrC$ induces an injection from the poset of strata of $Q$ to the poset of strata of $\Prod$.

As remarked earlier, $\RP^A_{\geq 0}$ is canonically identified with $\sigma(A)$.  The map $p_\Delta$ identifies the interior of $P$ with the interior  $\osigma(G)$ of  $\sigma(G)=\sigma(\Delta)$. If $A$ is a core graph with edges $1,\ldots,m$ and $x$ is in the interior of $P$, we find
$$
  [p_A(x)_1:\ldots:p_A(x)_m]= [p_\Delta(x)_1:\ldots:p_\Delta(x)_m]
$$
because
\(
  p_\Delta(x)_i = x_i\prod_{S \ni i} g_S(x_S) = p_A(x)_i \big( \prod_{S\supseteq A}g_S(x_S)\big)
\)
for any edge $i$ in $A$. Thus we have a commutative diagram
$$
\begin{tikzcd}[]
& \mathring P\arrow[dl, "p_\Delta"'] \arrow[d,"p_{\mathscr C}"]\arrow[hookrightarrow, r]&[-1em] P\arrow[d,"p_{\mathscr C}","\approx"']\\
\osigma(G) \arrow[r, "BF"]& \prod_{A\in\mathscr C}\sigma(A)  &\Sigma(G)\arrow[left hook->, l]
 \end{tikzcd}
 $$ 
where $BF$ is the map defined by Bestvina and Feighn, and the image  $p_{\mathscr C}(P)$  is equal to Bestvina and Feighn's cell $\Sigma(G)$.

\begin{theorem} The jewel space $\BVn$ is  equivariantly homeomorphic to the bordification $\BFn$.
\end{theorem}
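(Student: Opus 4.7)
The plan is to promote the cell-by-cell identifications of Theorem~\ref{homeo-on-cells} to a global equivariant homeomorphism, using Proposition~\ref{prop:commutes} for face-compatibility and the rank-only dependence of the constants $c_A$ for global consistency across different jewels.

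First, Theorem~\ref{homeo-on-cells} already supplies, for each marked graph $(G,g)$, a stratum-preserving homeomorphism $p_{\scrC}\colon J(G,g)\to\Sigma(G,g)$. The next step is to check that whenever $(G',g')$ arises from $(G,g)$ by collapsing a subforest $\Phi$, the square
\[
\begin{tikzcd}
J(G',g')\arrow[r,hookrightarrow]\arrow[d,"p_{\scrC'}"'] & J(G,g)\arrow[d,"p_{\scrC}"]\\
\Sigma(G',g')\arrow[r,hookrightarrow] & \Sigma(G,g)
\end{tikzcd}
\]
commutes. The left inclusion is a face inclusion precisely because $c_A$ depends only on $\rank H_1(A)$, so that $c_A=c_{A'}$ whenever $A=\core(A'\cup\Phi)$. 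The right inclusion, viewed inside $\prod_A \csigma(A)$, arises from the collapse map $\kappa_\phi\colon\scrC\to\scrC'$: the factors indexed by core subgraphs supported on $\Phi$ degenerate on the limit locus $\Sigma(G',g')$, and the surviving factors are matched via the section $A'\mapsto \core(A'\cup\Phi)$. Factor-by-factor commutativity is then exactly the content of Proposition~\ref{prop:commutes}.

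With face-compatibility in place, the family $\{p_{\scrC}\}_{(G,g)}$ glues to a well-defined continuous map $\Phi\colon \BVn\to \BFn$. Global injectivity follows from Proposition~\ref{local} together with the observation that $\BVn$ and $\BFn$ carry the same cell poset (the spine $K_n$), and surjectivity is immediate from cell-wise surjectivity. Since each $J(G,g)$ is compact and $\Phi$ is a homeomorphism onto $\Sigma(G,g)$, $\Phi$ sends each cell to a closed subset; combined with local finiteness of the cell structure (which follows from cocompactness of the $\Outn$-actions on both sides), this makes $\Phi$ a closed continuous bijection, hence a homeomorphism.

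Equivariance is essentially built into the construction: the definitions of $J(G,g)$, of the truncation constants $c_A$, and of the gluing functions $g_S$ are all intrinsic to the core-subgraph combinatorics of $G$ and do not involve the marking $g$ beyond labelling the cell; the $\Outn$-action on marked graphs alters only the marking, and similarly leaves Bestvina--Feighn's construction of $\Sigma(G,g)$ intrinsic to $G$. Hence $\Phi$ intertwines the two actions. The main obstacle, and the step meriting the most care, is the face-compatibility square above: one must carefully unpack the Bestvina--Feighn face inclusion in the product $\prod_{A\in\scrC}\csigma(A)$, verify that the factors indexed by core subgraphs $A\subseteq\Phi$ genuinely collapse in the limit, and confirm that Proposition~\ref{prop:commutes}, stated only for a single $A$ at a time, collectively accounts for every factor appearing in the target.
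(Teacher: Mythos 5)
Your proposal follows essentially the same approach as the paper: cell-wise homeomorphisms from Theorem~\ref{homeo-on-cells}, face compatibility from Proposition~\ref{prop:commutes}, and a gluing step. The paper packages the gluing as an isomorphism of colimits (over the category of marked graphs with forest collapses as morphisms), whereas you verify directly that the glued map is a closed continuous bijection; these are equivalent given compactness of the jewels and local finiteness of the cell poset. One minor imprecision in your framing of the face inclusion is worth flagging: since $\Phi$ is a forest it contains no nontrivial core subgraph, so no factors of $\prod_{A\in\scrC}\csigma(A)$ actually ``degenerate''; the inclusion $\Sigma(G',g')\hookrightarrow\Sigma(G,g)$ is the \emph{diagonal} map sending the factor $\RP^{A'}_{\geq 0}$ simultaneously into the corresponding face of $\RP^{A}_{\geq 0}$ for \emph{every} $A\in\kappa_\phi^{-1}(A')$, not only for $A=\core(A'\cup\Phi)$, and Proposition~\ref{prop:commutes} (which applies to each such pair) does account for all of them, as you anticipated in your closing caveat.
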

\begin{proof}  
Theorem~\ref{homeo-on-cells} yields stratum-preserving homeomorphisms
\[
  p : J(G,g) \rightarrow \Sigma(G,g)
\]
for each marked graph $(g,G)$.  

Now consider a forest collapse $\kappa : G \rightarrow G'$. A core graph
$A$ in $G$ maps to a core graph $A'=\kappa(A)$ in $G'$. The edges of $A'$
can naturally be identified with edges in $A$, whence $\RP^{A'}_{\geq 0}$ is
naturally a face of $\RP^{A}_{\geq 0}$. Although several core graphs in $G$
might give rise to the same core graph $A'$ in $G'$, the diagonal
inclusion
\[
  \prod_{A'\in\mathscr{C}(G')} \RP^{A'}_{\geq 0} \rightarrow
  \prod_{A\in\mathscr{C}(G)} \RP^{A}_{\geq 0}
\]
is well defined and restricts to an inclusion
\(
  \Sigma(G',g') \incl \Sigma(G,g)
\)
where $g$ is a marking on $G$ and $g'$ is the induced marking on $G'$.

The bordification
$\BFn$ is defined by gluing the cells $\Sigma(G,g)$ together,  identifying 
$\Sigma(G',g')$ with its image in $\Sigma(G,g)$ whenever $(G',g')$ can
be obtained from $(G,g)$ by collapsing a forest. In categorical language, the bordification $\BFn$ is the colimit of the functor $(G,g)\mapsto\Sigma(G,g)$,
defined on the category of marked graphs with forest collapses as morphisms.

Similarly, the jewel $J(G',g')$ is a face of the jewel $J(G,g)$ and jewel space
$\BVn$ can be obtained by gluing the jewels along the given identifications, i.e.
$\BVn$ is the colimit of the functor $(G,g)\mapsto J(G,g)$. This follows from the
description given in Section~\ref{fitting} because outer space $\On$ is the colimit
of the cells $\cOsigma(G,g)$ and the deformation retractions
\(
  \cOsigma(G,g) \rightarrow J(G,g)
\)
are compatible with inclusion of faces.

By Proposition~\ref{prop:commutes} the diagrams
$$
\begin{tikzcd}
J(G',g') \arrow[hookrightarrow,r]\arrow[d]& [-.5em]  J(G,g)\arrow[d] \\
  \Sigma(G',g')\arrow[hookrightarrow,r] & \Sigma(G,g)
\end{tikzcd}
$$
are commutative.  Thus  we can paste the homeomorphisms
\(
  J(G,g) \rightarrow \Sigma(G,g)
\)
together to obtain  an equivariant homeomorphism of colimits. 
\end{proof}

\section{The boundary of the bordification}
In a sequel to this paper we will use the identification of  $\BVn$ with $\BFn$ to study the boundary of  the bordification, i.e. the difference between the bordification and its interior. In particular, we show how to cover this boundary by contractible subcomplexes with contractible intersections.  This is analogous to Borel and Serre's covering of the bordification of symmetric space by Euclidean spaces $e(P)$ associated to parabolic subgroups $P$.  In the Borel-Serre case the nerve of the covering is homotopy equivalent to the Tits building of subspaces of a rational vector space, which has the homotopy type of a wedge of spheres.  The top-dimensional homology of the Tits building is the dualizing module.   In our case the  nerve of the covering is homotopy equivalent to a subcomplex of the sphere complex in a doubled handlebody (also called the complex of free splittings), and the relation between its homology and the dualizing module is not so clear.

\affiliationone{Kai-Uwe Bux\\
  Fakult{\"a}t f{\"u}r Mathematik\\
   Universit{\"a}t Bielefeld\\
   Postfach 100131\\
   Universit{\"a}tsstra{\ss}e 25\\
   D-33501 Bielefeld\\
   Germany
   \email{bux@math.uni-bielefeld.de}\\
}
\affiliationtwo{ 
   Peter Smillie\\
   Harvard University\\
Department of Mathematics\\
One Oxford St.\\
Cambridge, MA 02138\\
U.S.A.
\email{smillie@math.harvard.edu}}
\affiliationthree{
Karen Vogtmann\\
Mathematics Institute\\
Zeeman Building\\
University of Warwick\\
Coventry CV4~7AL\\
U.K.
\email{k.vogtmann@warwick.ac.uk} }
\end{document}